\documentclass[11pt,letterpaper]{amsart}

\usepackage{amsmath,amssymb,amsthm}
\usepackage{mathrsfs}
\usepackage{indentfirst,color}
\usepackage[colorlinks,citecolor=red,linkcolor=blue,urlcolor=cyan]{hyperref}
\usepackage{graphicx}
\usepackage{tikz,tikz-cd}
\usepackage{bm}

\theoremstyle{plain}%
\newtheorem{theorem}{Theorem}[section]%
\newtheorem{lemma}[theorem]{Lemma}%

\theoremstyle{definition}%
\newtheorem{definition}[theorem]{Definition}%
\theoremstyle{remark}%
\newtheorem{remark}[theorem]{Remark}%

\newcommand{\NN}{{\mathbb{N}}}%

\makeatletter
\newsavebox{\@brx}
\newcommand{\llangle}[1][]{\savebox{\@brx}{\(\m@th{#1\langle}\)}%
    \mathopen{\copy\@brx\kern-0.5\wd\@brx\usebox{\@brx}}}
\newcommand{\rrangle}[1][]{\savebox{\@brx}{\(\m@th{#1\rangle}\)}%
    \mathclose{\copy\@brx\kern-0.5\wd\@brx\usebox{\@brx}}}
\makeatother

\newcommand{\Id}{{\operatorname{Id}}}%
\newcommand{\Vol}{{\operatorname{Vol}}}%
\newcommand{\loc}{\textup{loc}}%
\newcommand{\nd}{\textup{nd}}%
\newcommand{\image}{\textup{i}}%
\newcommand{\sgn}{\textup{sgn}}%

\newcommand{\inner}[1]{\langle#1\rangle}%
\renewcommand{\bar}[1]{\overline{#1}}%
\newcommand{\ddbar}{\image\partial\bar{\partial}}%

\renewcommand{\leq}{\leqslant}%
\renewcommand{\geq}{\geqslant}%

\numberwithin{equation}{section}

\begin{document}

\title[Vanishing theorems for pseff line bundles]{Vanishing theorems for pseudo-effective line bundles }
\author[Xiankui Meng]{Xiankui Meng}
\address{Xiankui Meng: School of Mathematical Sciences and Key Laboratory of Mathematics and Information Networks (Ministry of Education), Beijing University of Posts and Telecommunications, Beijing 100876, China.}
\email{mengxiankui@amss.ac.cn}

\author[Chenghao Qing]{Chenghao Qing}
\address{Chenghao Qing: Yau Mathematical Sciences Center, Tsinghua University, Beijing 100084, China.}
\email{qingchenghao@amss.ac.cn}

\author[Xiangyu Zhou]{Xiangyu Zhou}
\address{Xiangyu Zhou: Institute of Mathematics, Academy of Mathematics and Systems Science, Chinese Academy of Sciences, Beijing 100190, China.}
\email{xyzhou@math.ac.cn}

\date{}

\thanks{The first author was partially supported by the National Natural Science Foundation of China (Grant No. 12271057) and by the National Key Research and Development Program of China (Grant No. 2021YFA1002600).
The second author was supported by the China Postdoctoral Science Foundation (Grant No. 2025M773087).
The third author was supported by National Key R\&D Program of China (No. 2021YFA1003100) and by the National Natural Science Foundation of China (Grant No. 12288201).}

\begin{abstract}
    In the present paper, we establish a general Kawamata-Viehweg-Koll\'ar-Nadel type vanishing theorem for higher direct images in terms of numerical dimension for closed positive currents  on compact K\"ahler manifolds, unifying a number of important vanishing theorems.
\end{abstract}

\keywords{Pseudo-effective line bundle, Vanishing theorem, numerical dimension, Multiplier ideal sheaf}

\subjclass[2020]{
    %Primary AMS Code
    32C35, %- Analytic sheaves and cohomology groups
    %Secondary AMS Codes
    14F17, %-vanishing theorem
    14F18, %- Multiplier ideals
    32L10, %- Sheaves and cohomology of sections of holomorphic vector bundles, general results
    32L20, %- Vanishing theorems
    32U05, %-plurisubharmonic functions
    14C30 %-transcendental methods
}

\maketitle

\section{Introduction}

Vanishing theorems for cohomology groups valued in coherent analytic sheaves play a fundamental role in the study of positivity in algebraic and analytic geometry.
There are numerous important results in this field, for example, 
the classical Kodaira's vanishing theorem, Cartan-Serre's theorem A and B, Kawamata-Viehweg's vanishing theorem, Koll\'ar's vanishing theorem, Nadel's vanishing theorem and their various generalizations, which have been playing a central role in the classification theory of higher dimensional projective algebraic varieties and the extension theory in several complex variables.

Recently, the notion of multiplier ideal sheaf associated with a plurisubharmonic function or a singular hermitian metric with semi-positive curvature current plays an important role in the study of vanishing theorems. 

Let us first recall some definitions and notions.
A singular Hermitian metric $h$ on a holomorphic line bundle $L\to X$ is simply a metric which can be expressed locally as
$e^{-\varphi_U}$ on $U$ such that $\varphi_U$ is $L^1_\loc$, where
$U\subset X$ is a local coordinate chart such that $L|_U\simeq U\times\mathbb{C}.$
It has a well-defined curvature current $\image\Theta_{L,h}:=\ddbar\varphi_U.$

If $\varphi$ is a quasi-plurisubharmonic (quasi-psh for short) function on $X$, the multiplier ideal sheaf $\mathcal{I}(\varphi)$ is the ideal subsheaf of $\mathcal{O}_X$ defined by
$$\mathcal{I}(\varphi)_x=\{f\in\mathcal{O}_{X,x}:~\exists\ U\ni x\  \text{such that}\ \int_U|f|^2e^{-\varphi}d\lambda<+\infty  \},$$
where $U$ is an open coordinate neighborhood of $x$ and $d\lambda$ is the standard Lebesgue measure in $\mathbb{C}^n.$
The upper regularized multiplier ideal sheaf $\mathcal{I}_+(\varphi)$ is defined by
$\mathcal{I}_+(\varphi)=\bigcup_{\varepsilon>0}((1+\varepsilon)\varphi)$.

It is easy to see that associated to a singular Hermitian metric $h$ on $L$ satisfying
$\image\Theta_{L,h}\geq\gamma$
for some smooth $(1,1)$-form $\gamma$ in the sense of currents,
there is a well-defined multiplier ideal sheaf $\mathcal{I}(h)$ on $X$ and $\mathcal{I}(h)$ is coherent (\cite{Nad90}).

Another basic property of the  multiplier ideal sheaf which was posed as a conjecture by Demailly (proved in \cite{GZ15-a}) is as follows.

\begin{theorem}[strong openness \cite{GZ15-a}]
The upper regularized multiplier ideal sheaf is exactly the multiplier ideal sheaf, that is, $\mathcal{I}_+(\varphi)=\mathcal{I}(\varphi)$ if $\varphi$ is a quasi-plurisubharmonic function
\end{theorem}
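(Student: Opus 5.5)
The plan is to argue by contradiction, using a concavity property of minimal $L^2$ integrals on sublevel sets of $\varphi$. This is the route of the "sharp effectiveness" proofs of the Guan–Zhou theorem.

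\emph{Localization and setup.} The inclusion $\mathcal{I}_+(\varphi)\subset\mathcal{I}(\varphi)$ is trivial, so only the reverse inclusion needs proof, and it is local. Subtracting a smooth function changes neither sheaf, so we may work on a small ball $D\ni 0$ with $\varphi$ psh and $\varphi<0$ on $D$. The sheaves $\mathcal{I}((1+\varepsilon)\varphi)$ form an increasing family of coherent ideals as $\varepsilon\downarrow 0$. By the Noetherian property they stabilize, so $\mathcal{I}_+(\varphi)=\mathcal{I}((1+\varepsilon_0)\varphi)$ is coherent near $0$. Suppose $F\in\mathcal{O}(D)$ satisfies $\int_D|F|^2e^{-\varphi}d\lambda<\infty$ but $F_0\notin\mathcal{I}_+(\varphi)_0$. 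For $t\geq 0$ we set
\[
G(t)=\inf\Bigl\{\int_{\{\varphi<-t\}}|f|^2d\lambda:\ f\in\mathcal{O}(\{\varphi<-t\}),\ (f-F)_0\in\mathcal{I}_+(\varphi)_0\Bigr\}.
\]
By Hörmander/Montel compactness together with the closedness of submodules of $\mathcal{O}_0$ under locally uniform convergence (Krull), the infimum is attained. Since $F_0\notin\mathcal{I}_+(\varphi)_0$, the infimum is strictly positive: $G(0)>0$, and likewise $G(t)>0$ for all $t$.

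\emph{Key step: concavity.} I would show that $r\mapsto G(-\log r)$ is concave on $(0,1]$. This is the main obstacle. Given $t_2>t_1\geq 0$ and the minimizer $f_{t_2}$ on $\{\varphi<-t_2\}$, I would extend it to $\{\varphi<-t_1\}$ as follows. Take the cutoff $v_\epsilon(\varphi)$ adapted to the layer $\{-t_2-\epsilon<\varphi<-t_2\}$ and solve
\[
\bar\partial u=f_{t_2}\,\bar\partial v_\epsilon(\varphi)
\]
using the twisted Ohsawa–Takegoshi/Hörmander estimate with optimal constant. The weights are chosen as functions of $\varphi$ (as in the Guan–Zhou proof of the optimal $L^2$ extension theorem) so that $u$ lies in $\mathcal{I}(\varphi)\subset$ the required class and the resulting extension $\tilde f$ satisfies
\[
\int_{\{\varphi<-t_1\}}|\tilde f|^2\leq \int_{\{\varphi<-t_2\}}|f_{t_2}|^2+\frac{e^{-t_1}-e^{-t_2}}{t_2-t_1}\cdot\bigl(\text{layer integral}\bigr).
\]
Letting $\epsilon\to0$ gives the estimate that amounts to concavity in $r=e^{-t}$. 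I would try first to reproduce the ODE choice of the twisting functions from the optimal-constant extension theorem, since that is where sharpness is essential.

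\emph{Conclusion.} The function $G(-\log r)$ is concave, nonnegative, and tends to $0$ as $r\to0$. Hence $G(t)\geq e^{-t}G(0)$. Moreover $F$ is admissible for each $t$, so $\int_{\{\varphi<-t\}}|F|^2\geq G(t)$. Since $\varphi<0$, we can write $e^{-\varphi}=1+\int_0^\infty e^t\mathbf{1}_{\{\varphi<-t\}}\,dt$, and Fubini gives
\[
\int_D|F|^2e^{-\varphi}\geq\int_0^\infty e^t\int_{\{\varphi<-t\}}|F|^2\,dt\geq\int_0^\infty G(0)\,dt=\infty,
\]
a contradiction. Therefore $F_0\in\mathcal{I}_+(\varphi)_0$, so $\mathcal{I}(\varphi)\subset\mathcal{I}_+(\varphi)$, which completes the proof.
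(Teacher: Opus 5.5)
The paper gives no proof of this statement; it quotes it from Guan--Zhou \cite{GZ15-a}, so your argument has to stand on its own. Its architecture (minimal $L^2$ integrals on sublevel sets, concavity in $r=e^{-t}$, Fubini over layers) is that of Guan's later sharp-effectiveness argument via concavity of minimal $L^2$ integrals. However, \textbf{your key step is circular}. For the extension $\tilde f=v_\epsilon(\varphi)f_{t_2}-u$ to be admissible for $G(t_1)$, you need $u_0\in\mathcal{I}_+(\varphi)_0$. You correctly note that the twisted estimate, with weights that are functions of $\varphi$ and behave like $e^{-\varphi}$ near the pole, only gives $u_0\in\mathcal{I}(\varphi)_0$. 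But the inclusion ``$\mathcal{I}(\varphi)\subset$ the required class'' that you then invoke is exactly $\mathcal{I}(\varphi)_0\subset\mathcal{I}_+(\varphi)_0$, the statement being proved; the trivial inclusion goes the other way. The obvious patches do not work:
\begin{itemize}
\item If you use the ideal $\mathcal{I}(\varphi)_0$ instead, concavity does hold, but then $G\equiv0$, because $F_0\in\mathcal{I}(\varphi)_0$ makes $f=0$ admissible.
\item If you strengthen the weight to $\approx e^{-(1+\varepsilon)\varphi}$ to force $u_0\in\mathcal{I}_+(\varphi)_0$, the sharp constant only gives $G(t)\geq e^{-(1+\varepsilon)t}G(0)$. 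Your Fubini step then yields the finite bound $\frac{1+\varepsilon}{\varepsilon}G(0)$, not $\infty$.
\end{itemize}
That your version produces $\int_D|F|^2e^{-\varphi}=\infty$ with no parameter to send to a limit is the symptom: concavity on sublevel sets of $\varphi$ itself with the ideal $\mathcal{I}_+(\varphi)_0$ is essentially equivalent to strong openness. Separately, the factor in your displayed estimate is wrong. It should be $e^{t_2-t_1}-1$ multiplying $\epsilon^{-1}\int_{\{-t_2-\epsilon<\varphi<-t_2\}}|f_{t_2}|^2$. Your factor $\frac{e^{-t_1}-e^{-t_2}}{t_2-t_1}$ does not tend to $0$ as $t_1\to t_2$ and does not encode concavity.

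The repair is to keep $p>1$ in the concavity step and take the limit only at the end. Run your argument with $\Psi=p\varphi$ and the ideal $\mathcal{I}(p\varphi)_0$. The correction now genuinely lies in $\mathcal{I}(p\varphi)_0$, and concavity gives $G_p(t)\geq e^{-t}G_p(0)$ on $\{p\varphi<-t\}$. Writing $e^{-\varphi}=e^{-p\varphi/p}=1+\frac1p\int_0^\infty e^{t/p}\mathbf{1}_{\{p\varphi<-t\}}\,dt$, your Fubini step becomes
\[
\int_D|F|^2e^{-\varphi}\,d\lambda\ \geq\ G_p(0)\Bigl(1+\frac1p\int_0^\infty e^{-(1-1/p)t}\,dt\Bigr)=\frac{p}{p-1}\,G_p(0).
\]
Now use the Noetherian stabilization you already established. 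For $p$ close to $1$ we have $\mathcal{I}(p\varphi)_0=\mathcal{I}_+(\varphi)_0$, and $\{p\varphi<0\}=D$. Hence $G_p(0)=\inf\{\int_D|f|^2:(f-F)_0\in\mathcal{I}_+(\varphi)_0\}$ is a fixed positive number, while $\frac{p}{p-1}\to\infty$ as $p\to1^+$. This contradicts $\int_D|F|^2e^{-\varphi}<\infty$. So the stabilization is not merely a coherence remark: it is what lets you push the $p>1$ estimates, each of which is only finite, to the limit.
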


By using Kawamata-Viehweg vanishing theorem, one can prove the famous base point free theorem combining with the non-vanishing theorem (see \cite[\S 3.2]{KM98}).

In order to extend Kawamata-Viehweg vanishing theorem to the setting of compact K\"ahler manifolds, Demailly and Peternell in \cite[Theorem 3.3]{DP03} proved a vanishing theorem involving the upper regularized multiplier ideal sheaf $\mathcal{I}_+(\varphi)$ (therefore also true for the multiplier ideal sheaf $\mathcal{I}(\varphi)$ by the strong openness),
which can be used to derive the Kawamata-Viehweg vanishing theorem for nef line bundles with numerical dimension two.
Demailly and Peternell used their vanishing theorem to obtain a part of the abundance theorem for minimal K\"ahler threefolds.

Let's recall here a version of the Kawamata-Viehweg vanishing theorem in the context of nef line bundles and multiplier ideal sheaves.
One can see the direct proof via Nadel's vanishing theorem in \cite{DemSmall}.
\begin{theorem}[{\cite{Kaw82,Vie82,DemSmall}}]\label{K-V vanishing}
    Let $X$ be a projective algebraic manifold of dimension $n$ and let $F$ be a line bundle over $X$ such that
    some positive multiple $mF$ can be written $mF = L+D$ where $L$ is a nef line bundle and $D$ an effective divisor.
    Then
    $$H^p(X,K_X\otimes F\otimes\mathcal{I}(m^{-1}D))=0 \quad\text{for } p\geq n-\nd(L)+1,$$
    where $\nd(L)$ is the numerical dimension of $L$.
    In particular, if $F$ is a nef line bundle, then
    $$H^p(X,K_X\otimes F)=0 \quad\text{for } p\geq n-\nd(F)+1.$$
\end{theorem}

Cao developed their methods and obtained a vanishing theorem on numerical dimension valued in $\mathcal{I}_+(\varphi)$ for pseudo-effective line bundles with singular metrics in \cite[Theorem 1.4]{Cao14}.
The following Kawamata-Viehweg-Nadel type vanishing theorem on compact K\"ahler manifolds holds, by combining the results of Cao and Guan-Zhou.
\begin{theorem}[\cite{Cao14,GZ15-a}]\label{CGZ vanishing}
    Let $(L,h)$ be a pseudo-effective line bundle on a compact K\"ahler manifold $X$ of dimension $n$. Then
    $$H^p(X,K_X\otimes L\otimes \mathcal{I}(h))=0 \quad \text{for every } p\geq n-\nd(\image\Theta_{L,h})+1,$$
    where $\nd(\image\Theta_{L,h})$ is the numerical dimension of $\image\Theta_{L,h}$.
\end{theorem}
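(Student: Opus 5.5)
The argument combines Cao's theorem, which gives the vanishing with $\mathcal{I}_+(h)$, with the strong openness theorem of Guan--Zhou, which identifies $\mathcal{I}_+(h)$ with $\mathcal{I}(h)$. Write $h=e^{-\varphi}$ locally, with $\varphi$ quasi-psh since $\image\Theta_{L,h}\geq 0$. The proof then has two steps.

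\textbf{Step 1.} Establish the $\mathcal{I}_+$ version:
$$H^p(X,K_X\otimes L\otimes\mathcal{I}_+(h))=0 \quad\text{for } p\geq n-\nd(\image\Theta_{L,h})+1.$$
This is \cite[Theorem 1.4]{Cao14}. If one wants to reprove it, I would follow Cao's strategy.

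First, fix a K\"ahler form $\omega$ on $X$. Use Demailly's regularization with analytic singularities to obtain metrics $h_\varepsilon$ on $L$ with
$$\image\Theta_{L,h_\varepsilon}\geq -\varepsilon\omega, \qquad \mathcal{I}(h_\varepsilon)\supseteq\mathcal{I}_+(h) \text{ (for suitable parameters).}$$
Second, represent a class in $H^p(X,K_X\otimes L\otimes\mathcal{I}_+(h))$ by a $\bar\partial$-closed $L$-valued $(n,p)$-form that is $L^2$ with respect to $(1+\delta)\varphi$. Via a log resolution, the singularities of $h_\varepsilon$ can be treated as smooth data.

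Third, solve $\bar\partial u_\varepsilon=f$ using the Bochner--Kodaira--Nakano inequality with a suitably twisted K\"ahler metric $\omega_\varepsilon$. The numerical dimension hypothesis enters here: on a large set, the curvature of $h_\varepsilon$ has at least $\nd(\image\Theta_{L,h})$ eigenvalues bounded below in a controlled way. This is derived from the nonvanishing of the non-pluripolar product $\langle T^{\nd}\rangle\wedge\omega^{n-\nd}$ through Monge--Amp\`ere mass estimates. As a result, the sum of the lowest $p$ eigenvalues is positive for $p\geq n-\nd+1$, up to an error tending to $0$.

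Finally, the $L^2$ estimates are uniform, and the solutions converge weakly to $u$ with $\bar\partial u=f$ and $u$ in the same $\mathcal{I}_+$ class. Hence the class vanishes.

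\textbf{Step 2.} By strong openness \cite{GZ15-a}, $\mathcal{I}_+(h)=\mathcal{I}(h)$ as subsheaves of $\mathcal{O}_X$. Substituting this into Step 1 gives the statement.

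The only real obstacle lies inside Cao's Step 1. It is the eigenvalue estimate converting the numerical dimension of the current into uniform positivity of $p$-fold eigenvalue sums for the regularized metrics; that is where the mass of the non-pluripolar product must be controlled. Given Cao's theorem as stated, the deduction here is just the sheaf identification in Step 2.
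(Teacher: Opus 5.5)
Your proposal is correct and matches how the paper obtains this statement: the paper quotes it without a separate proof, as Cao's vanishing theorem for $\mathcal{I}_+(h)$ \cite[Theorem 1.4]{Cao14} combined with the strong openness identity $\mathcal{I}_+(h)=\mathcal{I}(h)$ from \cite{GZ15-a}. One caution about your optional sketch of Cao's step: the $L^2$ bounds are not uniform there, and you do not obtain an exact solution in the limit. Because the eigenvalue control fails on the small-volume sets $U_k$, one only gets $\bar\partial u_k+v_k=f$ with $\|v_k\|\to 0$, and the conclusion comes from the Hausdorff property of the cohomology, as in Step 3 of the proof of Theorem~\ref{MainThm1'}.
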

Note that if $(L,h)$ is big, then $\nd(\image\Theta_{L,h})=n$. Therefore, the above theorem for the case of big bundles reduces to Nadel vanishing theorem. One can also deduce Kawamata-Viehweg's vanishing theorem from the above vanishing theorem.\\

The motivations in this paper are two-folds, one is to consider the above types of vanishing theorems, another one is to consider the following types vanishing theorems for higher direct images. We study the relations between the two types important vanishing theorems.

The following vanishing theorem was obtained by Koll\'ar.

\begin{theorem}[{\cite[Theorem 2.1]{Kol86-a}}] \label{Kollar vanishing}
    Let $\pi:X\rightarrow Y$ be a surjective map from a smooth projective variety $X$ to a reduced projective variety $Y$.
    Let $L$ be an ample line bundle on $Y$. Then
    $$H^p(Y,R^q\pi_*K_X\otimes L)=0 \quad \text{for any } p\geq 1 \text{ and } q\geq0.$$

\end{theorem}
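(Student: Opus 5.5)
The plan is to use Koll\'ar's two structural results on higher direct images, torsion-freeness and the decomposition theorem. The vanishing statement then reduces to the case $q=0$, which is a Kodaira-type vanishing for $\pi_*K_X\otimes L$.

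\textbf{Step 1: reduction to $Y$ irreducible and $\pi$ with connected general fibre.} Since $X$ is smooth, the components of $X$ are disjoint and can be handled one at a time, so we may assume $X$ irreducible; then $Y=\pi(X)$ is irreducible. If needed, Stein factorization $X\to Y'\to Y$ with $Y'\to Y$ finite lets us replace $Y$ by $Y'$: finite maps have no higher direct images, and the pullback of $L$ to $Y'$ is still ample.

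\textbf{Step 2: the case $q=0$.} Take a resolution $\mu:Y''\to Y$ and, after blowing up $X$ (which does not change $R^q\pi_*K_X$, since $R^j\nu_*K_{X'}=0$ for $j>0$ and $\nu_*K_{X'}=K_X$ for a modification $\nu$), assume $\pi$ factors through $Y''$. The first ingredient is Grauert--Riemenschneider vanishing for $\mu$. The second is the fact that $\pi_*K_X$ carries a singular Hermitian metric of semipositive curvature, by Griffiths/Fujita semipositivity of $f_*K_{X/Y}$. Combined with Nadel vanishing on $Y''$, these should give $H^p(Y,\pi_*K_X\otimes L)=0$ for $p\geq1$.

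A cleaner route for this step is an $L^2$ argument directly on $X$. Using $\pi^*L$, which is semipositive, and Enoki/Koll\'ar injectivity, one gets that
\[
H^p(X,K_X\otimes\pi^*L^{k})\to H^p(X,K_X\otimes\pi^*L^{k+m})
\]
is injective. For $k$ large, Serre vanishing on $Y$ together with the Leray spectral sequence identifies the target group with $H^0(Y,R^p\pi_*K_X\otimes L^{k+m})$. Taking $k=1$ and using Leray again, the vanishing follows once the spectral sequence degenerates.

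\textbf{Step 3: decomposition and the general case.} Koll\'ar's decomposition theorem gives
\[
R\pi_*K_X\simeq\bigoplus_q R^q\pi_*K_X[-q].
\]
Each $R^q\pi_*K_X$ is torsion-free, and after resolving and cutting by general hyperplane sections of the fibres it is a direct summand of $\pi'_*K_{X'}$ for a suitable smooth $X'$ mapping to $Y$ (Koll\'ar's ``$q$-th hyperplane'' trick). The case $q=0$ from Step 2 then yields the theorem for all $q$.

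\textbf{Main obstacle.} The hard step is proving degeneration of Leray (the decomposition) together with torsion-freeness. I would attempt this via the injectivity theorem: a nonzero torsion or $d_2$-obstruction would produce an element killed by multiplication by a section $s$ of $L^m$. That contradicts injectivity of $\cdot s$ on $H^p(X,K_X\otimes\pi^*L^k)$, which follows from harmonic theory. Here one uses that $\pi^*L$-twisted harmonic forms are $\bar\partial^*$-closed and are preserved by multiplication by $\pi^*s$.
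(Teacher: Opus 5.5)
\textbf{The gap.} There is a genuine gap, exactly where you flag the ``main obstacle''. Everything in your proposal funnels through $E_2$-degeneration of the Leray spectral sequence $E_2^{j,i}=H^j(Y,R^i\pi_*K_X\otimes L)\Rightarrow H^{i+j}(X,K_X\otimes\pi^*L)$, and you never establish it.

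If you grant degeneration, your ``cleaner route'' in Step~2 already proves the theorem for \emph{all} $q$ at once, so Step~3 and its unproved direct-summand claim are superfluous. Indeed, take $s\in H^0(Y,L^{\otimes N})$; multiplication by $\pi^*s$ respects the Leray filtration. For $N\gg0$, Serre vanishing gives $F^1H^i(X,K_X\otimes\pi^*L^{\otimes(N+1)})=0$. Enoki/Koll\'ar injectivity then forces $F^1H^i(X,K_X\otimes\pi^*L)=0$, i.e.\ $E_\infty^{j,i-j}=0$ for $j\geq1$, and degeneration turns this into $E_2^{j,i-j}=0$.

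But you cannot simply quote Koll\'ar's decomposition theorem. It was established after, and with the help of, the very vanishing theorem you are proving, so quoting it is circular unless you import an independent proof (e.g.\ via Hodge modules). Your sketch does not derive it from injectivity either. For torsion the idea is roughly Koll\'ar's, but injectivity of $\cdot\pi^*s$ only sees classes that live on $X$, i.e.\ $E_\infty$. If $d_2x\neq0$ for some $x\in E_2^{0,i}$, then $x$ does not lift to $H^i(X,K_X\otimes\pi^*L)$, and $d_2x$ dies in $E_3$. So no class on $X$ annihilated by $\pi^*s$ is produced, and nothing contradicts injectivity.

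The same problem already affects your case $q=0$: $E_2^{p,0}\to H^p(X,K_X\otimes\pi^*L)$ is injective only if no differential lands in $E_r^{p,0}$. The first route of Step~2 is only a heuristic. Writing $\pi=\mu\circ\pi''$, Grauert--Riemenschneider controls $R^j\mu_*K_{Y''}$, not $R^j\mu_*(\pi''_*K_X)$, and the vanishing of the latter is itself part of the Koll\'ar package.

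\textbf{What is missing.} You need a device that kills the columns $j\geq2$ before injectivity is applied. Algebraically this is Koll\'ar's induction on $\dim Y$, the same mechanism used in the proof of Theorem~\ref{MainThm4'}.
Take a general $B\in|L^{\otimes N}|$ with $N\gg0$, so that $D=\pi^{-1}(B)$ is smooth and $B$ avoids the associated points of every $R^q\pi_*K_X$.
Adjunction, $K_D=(K_X\otimes\pi^*L^{\otimes N})|_D$, then gives short exact sequences $0\to R^q\pi_*K_X\otimes L\to R^q\pi_*K_X\otimes L^{\otimes(N+1)}\to R^q(\pi|_D)_*K_D\otimes L|_B\to0$.
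Induction plus Serre vanishing yield $H^j(Y,R^q\pi_*K_X\otimes L)=0$ for $j\geq2$.
With only the columns $j=0,1$ left, the spectral sequence degenerates for free, and the filtration/injectivity argument above kills $j=1$. Neither decomposition nor torsion-freeness is needed.

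The paper itself does not reprove this statement; it cites Koll\'ar. The statement is a special case of the $L^2$ theorem of \cite{QZ26} quoted in the introduction. For smooth $Y$ it is also a special case of Theorem~\ref{MainThm2'}, applied to $\pi^*L$ with $h=\pi^*h_L$, $\sigma=\image\Theta_{L,h_L}$ for a positively curved smooth $h_L$, and $l=m$.
In that approach, the role of degeneration is played by the injective map $\varphi_{p,q}$ of Theorem~\ref{thm:construction of representatives}, built from harmonic representatives with $*\mathcal{H}(\alpha)$ holomorphic. Vanishing then follows by solving $\bar\partial u=b$ for $b=\sum\bar\partial\pi^*\rho_{i_0}\wedge\cdots\wedge\bar\partial\pi^*\rho_{i_{p-1}}\wedge\mathcal{H}(\alpha_{i_0\ldots i_p})$, using the estimate of Lemma~\ref{lemma:computation}.
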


Koll\'ar type vanishing theorems have many important applications.
They are also closely related to the injectivity theorem and the torsion-freeness of higher direct images.
For example, we refer to \cite{FM21,FW25,Kaw85,Kol86-b,Nak87}.

Koll\'ar's vanishing theorem can be generalized to pseudo-effective line bundles via injectivity theorems with multiplier ideal sheaves as follows.

\begin{theorem}[{\cite{Fuj18,FM21}}]
    Let $\pi:X\rightarrow Y$ be a surjective holomorphic map from a compact K\"ahler manifold $X$ to a projective variety
    $Y$.
    Let $L$ be a holomorphic line bundle on $X$ equipped with a singular hermitian metric $h$.
    Let $H$ be an ample line bundle on $Y$. Assume that there exists a smooth hermitian metric $g$ on $\pi^*H$ such that
    $$\image\Theta_{\pi^*H,g}\geq 0 \quad \text{and} \quad \image\Theta_{L,h}-b\image\Theta_{\pi^*H,g}\geq 0$$
    for some $b>0$. Then
    $$H^p(Y,R^q\pi_*(K_X\otimes L\otimes\mathcal{I}(h)))=0 \quad \text{for any } p\geq 1 \text{ and } q\geq0.$$

\end{theorem}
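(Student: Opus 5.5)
The plan is the standard route to Koll\'ar-type vanishing: combine an injectivity theorem with Serre vanishing on $Y$. Throughout, write $F:=K_X\otimes L\otimes\mathcal{I}(h)$ and $H_X:=\pi^*H$.

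\textbf{Step 1: an injectivity theorem.} The analytic input is a Fujino--Matsumura type injectivity theorem. Let $s$ be a nonzero section of $H^{\otimes m}$ on $Y$, and suppose $\pi^*s$ does not vanish identically on any associated subvariety of $F$ (this is automatic for $s$ general). Then multiplication by $\pi^*s$ should give injective maps
$$H^q(X,F)\to H^q(X,F\otimes H_X^{\otimes m})$$
for every $q$.

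I would prove this by harmonic theory with respect to a complete K\"ahler metric on $X\setminus Z$, where $Z$ is the singular locus of a smooth-off-$Z$ approximation $h_\varepsilon$ of $h$ (Demailly regularization). The steps are:
\begin{itemize}
\item Represent a class by $L^2$ harmonic forms $u_\varepsilon$ with respect to $h_\varepsilon$.
\item Use the hypothesis $\image\Theta_{L,h}\geq b\,\image\Theta_{H_X,g}$ with $m\leq$ (a multiple of) $b$. The Bochner--Kodaira--Nakano identity then shows that $\bar\partial^*(\pi^*s\, u_\varepsilon)\to 0$ in $L^2$ for the metric $h g^m$ (with $b$ replaced by a suitable rescaling).
\item If $\pi^*s\,u$ is $\bar\partial$-exact, conclude that $u_\varepsilon\to 0$.
\item Pass to the limit using the strong openness theorem (Guan--Zhou), which identifies $\mathcal{I}(h_\varepsilon)$ with $\mathcal{I}(h)$ for the relevant twists.
\end{itemize}
This limit passage is the main obstacle. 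It requires uniform $L^2$ estimates, the \v{C}ech--Dolbeault isomorphism with $L^2$ sheaves, and control of the error term coming from $\image\Theta_{L,h_\varepsilon}\geq -\varepsilon\omega$. For large $m$ one can handle general $m$ by replacing $(L,h)$ with the twist $(L\otimes H_X^{k},hg^k)$ inductively.

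\textbf{Step 2: torsion-freeness and decomposition.} Localize the injectivity in Step 1 over $Y$: take $m$ large, twist by $H^{N}$ for $N\gg0$, and use the Leray spectral sequence, which degenerates by Serre vanishing. This gives that $R^q\pi_*F$ is torsion-free, more precisely that it has no sections supported on proper subvarieties. As an alternative, one can take the quasi-isomorphism route of Koll\'ar via Takegoshi's analytic version of the decomposition $R\pi_*F\simeq\bigoplus_q R^q\pi_*F[-q]$. The cleaner path is to first establish the following injectivity on $Y$: for $s\in H^0(Y,H^{m})$ general, the map
$$H^p(Y,R^q\pi_*F)\to H^p(Y,R^q\pi_*F\otimes H^m)$$
is injective. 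This follows from Step 1 once the Leray spectral sequence for $F$ degenerates at $E_2$, and degeneration in turn follows from Step 1 applied after twisting, by the standard Koll\'ar argument: induct on $q$ using a general hyperplane section $D\in|H^m|$ and the exact sequence $0\to F\to F\otimes H_X^m\to F\otimes H_X^m|_{\pi^{-1}D}\to0$.

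\textbf{Step 3: conclusion.} Given injectivity on $Y$ for all $m\geq m_0$, Serre vanishing gives $H^p(Y,R^q\pi_*F\otimes H^m)=0$ for $p\geq1$ and $m\gg0$. Hence $H^p(Y,R^q\pi_*F)=0$. Since the hypothesis is stable under replacing $H$ by $H^{k}$ and $b$ by $b/k$, this covers the twist needed.

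The delicate points are the $L^2$ limit in Step 1 and ensuring that the injectivity holds for all large $m$ rather than just $m\leq b$. For the latter, I would apply Step 1 to $(L\otimes H_X^{j}, h g^{j})$, which still satisfies the curvature hypothesis with constant $b+j$, and compose the resulting injections.
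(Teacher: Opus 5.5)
The paper does not prove this statement. It quotes it from \cite{Fuj18,FM21}, and in the paper's own framework it is a special case of \cite[Theorem 1.8]{QZ26}. To see this, write $g=\pi^*g_He^{-\phi}$ with $g_H$ a positively curved metric on $H$ and $\phi$ smooth on $X$. Then $he^{b\phi}$ has the same multiplier ideal and satisfies $\image\Theta_{L,he^{b\phi}}\geq b\,\pi^*\image\Theta_{H,g_H}$. The vanishing then comes analytically from the representatives of Theorem~\ref{thm:construction of representatives} and $L^2$ estimates (compare the proof of Theorem~\ref{MainThm2'}), with no spectral sequence argument.

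You follow the classical injectivity-plus-Serre-vanishing route instead, which is a legitimate alternative. Step 1 is the Fujino--Matsumura injectivity theorem and can simply be cited. Your worry about $m\leq b$ is unfounded: $\image\Theta_{L,h}\geq (b/m)\,\image\Theta_{\pi^*H^m,g^m}$ and $\mathcal{I}(hg^m)=\mathcal{I}(h)$, so injectivity holds for every $m$ and every nonzero $s$ (no genericity is needed, since $F$ is torsion-free). Torsion-freeness of $R^q\pi_*F$ also follows by the standard argument.

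The gap is the $E_2$-degeneration of the Leray spectral sequence of $F$, which is where all the content lies. For $m\gg0$, injectivity and Serre vanishing only show that $\times\pi^*s$ sends $\mathrm{Fil}^1H^k(X,F)$ (the Leray filtration) into $\mathrm{Fil}^1H^k(X,F\otimes H_X^m)=0$, i.e.\ $E_\infty^{p,q}=0$ for $p\geq1$. This does not give $E_2^{p,q}=H^p(Y,R^q\pi_*F)=0$: for example, $d_2\colon H^0(Y,R^1\pi_*F)\to H^2(Y,\pi_*F)$ could be onto, and Step 1 gives no control over such differentials.

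Your justification, an induction on $q$ via a general $D\in|H^m|$, cannot work as stated. Put $D'=\pi^{-1}(D)$. For $p\geq2$, the sequence $0\to R^q\pi_*F\to R^q\pi_*F\otimes H^m\to R^q\pi_*(F\otimes H_X^m|_{D'})\to0$ exhibits $H^p(Y,R^q\pi_*F)$ as a quotient of $H^{p-1}(D,R^q\pi_*(F\otimes H_X^m|_{D'}))$ with the \emph{same} $q$. So what you need is the theorem itself for $\pi|_{D'}\colon D'\to D$, i.e.\ an induction on $\dim Y$. That induction gives vanishing for $p\geq2$, hence a two-column spectral sequence that degenerates, and then $p=1$ follows from injectivity as above.

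For the induction to apply, however, the quotient must again be of the type in the statement: $F\otimes H_X^m|_{D'}\cong K_{D'}\otimes L|_{D'}\otimes\mathcal{I}(h|_{D'})$. This needs, for general $D$, that $D'$ be smooth, contain no associated prime of $\mathcal{O}_X/\mathcal{I}(h)$, and satisfy $\mathcal{I}(h)\cdot\mathcal{O}_{D'}=\mathcal{I}(h|_{D'})$. Ohsawa--Takegoshi gives only $\mathcal{I}(h|_{D'})\subseteq\mathcal{I}(h)\cdot\mathcal{O}_{D'}$. The reverse inclusion is a relative Bertini theorem for multiplier ideals, which is exactly the input the paper imports (\cite[Corollary 3.11]{FM21}, \cite{MZ23-b,Xia22}) in its proof of Theorem~\ref{MainThm4'}. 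Your proposal never identifies this ingredient. The Takegoshi-type decomposition you offer as an alternative is likewise only asserted for singular $h$.
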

One can see \cite{Fuj23} for the relative version.\\

The following theorem due to Ohsawa and Matsumura can be seen as a generalization of Koll\'ar's result to the complex analytic setting.
\begin{theorem}[{\cite{Mat16,Ohs84}}]
    Let $\pi:X\rightarrow Y$ be a proper surjective holomorphic map from a weakly pseudoconvex K\"ahler manifold $X$ to an analytic space $Y$.
    Let $E$ be a vector bundle on $X$. If $E$ admits a metric whose curvature is larger than or equal to the pull-back of
    a K\"ahler form on $Y$ in the sense of Nakano, then we have
    $$H^p(Y,R^q\pi_*(K_X\otimes E))=0 \quad \text{for any } p\geq 1 \text{ and } q\geq 0.$$
\end{theorem}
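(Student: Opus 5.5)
The plan follows Ohsawa's approach: reduce the vanishing of $H^p(Y,R^q\pi_*(K_X\otimes E))$ to two facts about $X$ itself. One is a Leray-type degeneration. The other is a vanishing on $X$ for the twisted bundle $E$, which is strictly positive only in the directions coming from $Y$.

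Since the statement is local over $Y$ in the sense of coherent sheaves, I would first replace $Y$ by a relatively compact Stein open $V\subset Y$, or work with a Stein covering. Over $V$, the preimage $\pi^{-1}(V)$ is again weakly pseudoconvex Kähler: take the sum of an exhaustion of $X$ and the pull-back of a strictly psh exhaustion of $V$. On weakly pseudoconvex Kähler manifolds, Nakano positivity of $E$ gives Nakano vanishing, and more generally $L^2$ estimates with complete Kähler metrics. These show that each $R^q\pi_*(K_X\otimes E)$ is computed by $L^2$ harmonic forms, so it is coherent and carries a natural $L^2$-type metric structure.

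The key step is to establish two properties.
\begin{enumerate}
\item[(a)] An injectivity or decomposition statement in the spirit of Kollár–Takegoshi: $R\pi_*(K_X\otimes E)\simeq\bigoplus_q R^q\pi_*(K_X\otimes E)[-q]$ in the derived category. Equivalently, the Leray spectral sequence degenerates at $E_2$.
\item[(b)] For the Kähler form $\omega_Y$, the inequality $\image\Theta_E\geq\pi^*\omega_Y\otimes\Id_E$ in the Nakano sense makes the relevant $L^2$ $\bar\partial$-problem solvable in degrees coming from $Y$.
\end{enumerate}

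For (a), I would follow Takegoshi and Matsumura. Represent classes in $R^q\pi_*$ by harmonic $(n,q)$-forms $u$ relative to a complete Kähler metric on $\pi^{-1}(V\setminus\{\text{bad set}\})$. The Bochner–Kodaira–Nakano identity, with the curvature lower bound, forces $\bar\partial^*u=0$ and $\langle\image\Theta u,u\rangle$ to be controlled. It also shows that $u\mapsto u\wedge\pi^*\omega_Y^{\,k}$ is compatible with the harmonic theory. This gives a Lefschetz-type splitting in which $\pi^*\omega_Y$ plays the role of the ample class.

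Given (a), I would deduce the statement by the standard argument. Set $F_q=R^q\pi_*(K_X\otimes E)$. Then $H^p(Y,F_q)$ injects into $H^{p+q}(X,K_X\otimes E)$ as a direct summand. Now take $\eta$, a $\bar\partial$-closed $(0,p)$-form on $Y$ with values in $F_q$ and $p\geq1$, represented through harmonic fibre data as a global $(n,p+q)$-form on $X$. The extra positivity $\pi^*\omega_Y$ acts on the $p$ horizontal directions with eigenvalue sum at least $p>0$. Hence the $L^2$ estimate solves $\bar\partial v=\eta$ with $v$ of the same horizontal-vertical type, so $\eta$ is zero in $H^p(Y,F_q)$. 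Alternatively, one can phrase this via Čech cohomology: show that the sheaves $F_q$ carry a Nakano-semipositive Hermitian structure in the sense of Griffiths–Berndtsson that is twisted by $\omega_Y$, and then apply Nakano vanishing on $Y$, after a resolution or desingularisation of $Y$ if needed.

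The main obstacle is that $Y$ may be singular and $\pi$ need not be smooth, so the sheaves $F_q$ are not vector bundles. I would handle this by working on the Zariski-open set where $\pi$ is a submersion and $F_q$ is locally free. There I would use $L^2$ estimates for complete Kähler metrics on the complement of an analytic subset, obtained by adding $-\log(-\log|s|^2)$-type potentials. I would then extend the solutions across the bad set by $L^2$ extension, that is, Riemann extension for $L^2$ holomorphic forms. The technical heart is the uniform control of harmonic representatives and of the curvature term near the degeneracy locus, and that is where I expect most of the work to lie.
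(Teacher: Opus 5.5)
Your overall architecture is right, and it is the route the paper takes in proving Theorem~\ref{MainThm2'}. For a line bundle, smooth $Y$ and $\sigma=\omega_Y$, that theorem is exactly this statement; the vector bundle version is only cited. The route is to embed $H^p(Y,R^q\pi_*(K_X\otimes E))$ into $H^{p+q}(X,K_X\otimes E)$ via an explicit global form, and then kill that form by an $L^2$ estimate on $X$.

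\textbf{The gap: the key estimate.} There is a genuine gap at the decisive step. The estimate you defer as ``the technical heart'' is the crux, and what you assert in its place is not true. The $L^2$ norms and $\Lambda$ are taken with respect to $\omega_X$. In that metric, the horizontal eigenvalues $\tau_l$ of $\pi^*\omega_Y$ tend to $0$ as one approaches the locus where $d\pi$ drops rank, and there ``horizontal directions'' are not even defined. So no bound of the type ``eigenvalue sum $\geq p$'' holds, and $B=[\pi^*\omega_Y,\Lambda_{\omega_X}]$ is not positive definite there.

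The missing idea is a comparison between the form and the operator. The form $b$ of \eqref{formula:representative} consists of terms $\bar\partial\pi^*\rho_{i_0}\wedge\cdots\wedge\bar\partial\pi^*\rho_{i_{p-1}}$ wedged with (cut-offs of) harmonic $(n,q)$-forms. Its coefficients involve the minors $\det(\partial\pi_{j_\beta}/\partial z_{k_\alpha})$. Laplace expansion gives $|\det(\pi_J,z_K)|^2\leq C\sum_{l\in K}|\partial\pi_{j}/\partial z_l|^2\leq C'\sum_{l\in K}\tau_l$, so the vanishing of $b$ exactly compensates the degeneration of $B$. This is Lemma~\ref{lemma:computation}: $\langle B^{-1}b,b\rangle\leq C\sum|\bar\partial\rho_{i_0}\wedge\cdots\wedge\bar\partial\rho_{i_{p-1}}|^2_{\omega_Y}\,|\mathcal{H}(\alpha_{i_0\ldots i_p})|^2$. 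The constant $C$ is locally uniform on all of $X$, critical points of $\pi$ included; for singular $Y$, run the same computation through a local embedding $Y\hookrightarrow\mathbb{C}^N$.

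To finish, note that $\image\Theta_E\geq\pi^*\omega_Y\otimes\Id_E$ in the sense of Nakano gives $[\image\Theta_E,\Lambda_{\omega_X}]\geq B\otimes\Id_E$ on $(n,p+q)$-forms. A weight $e^{-\chi\circ\Phi}$ and Lemma~\ref{lemma:solve equation} on the complete K\"ahler sublevel sets $X_c$, with bounds uniform in $c$, then give $\bar\partial u=b$ on $X$. With this estimate nothing needs to be excised. Without it, deleting the degeneracy locus and putting complete metrics there gives no uniform bound on $\int\langle B^{-1}b,b\rangle$. Two further points on your excision plan:
\begin{itemize}
\item The extension you would need is of the equation $\bar\partial u=b$ (Lemma~\ref{lemma:extension}), not Riemann extension of holomorphic forms, since $u$ is not holomorphic.
\item Proving vanishing on the Zariski-open part of $Y$ where $F_q$ is locally free does not imply vanishing on $Y$.
\end{itemize}

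\textbf{The first step also needs repair.} Your sketch of (a) cannot work. For an $(n,q)$-form $u$, $u\wedge\pi^*\omega_Y^{k}$ has bidegree $(n+k,q+k)$ and so vanishes for $k\geq 1$. In the Koll\'ar--Takegoshi theory the Lefschetz operator is a K\"ahler form on $X$, acting through the holomorphic $(n-q,0)$-form $*u$; it is not a class pulled back from $Y$. A derived-category splitting is also more than you need. The real input is that harmonic representatives on $\pi^{-1}(W)$, $W$ Stein, are canonical and commute with restriction. (The paper states this for line bundles; for Nakano semipositive $E$ with a smooth metric it is Takegoshi's harmonic theory.) Consequently a \v{C}ech cocycle of classes $\{\alpha_{i_0\ldots i_p}\}$ becomes a \v{C}ech cocycle of forms $\{\mathcal{H}(\alpha_{i_0\ldots i_p})\}$. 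The \v{C}ech--Dolbeault zigzag built with the $\pi^*\rho_j$ then collapses to the explicit $b$, and the resulting map $\varphi_{p,q}$ is injective (Theorem~\ref{thm:construction of representatives}(2)).

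This form on $X$ should replace your ``$\bar\partial$-closed $(0,p)$-form on $Y$ with values in $F_q$''. That object has no meaning when $Y$ is singular and $F_q$ is not locally free. Using $b$ also makes your demand that $v$ be ``of the same horizontal--vertical type'' unnecessary, and the $L^2$ theorem gives no such control anyway.

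\textbf{Smaller issues.}
\begin{itemize}
\item ``Replace $Y$ by a relatively compact Stein open $V$'' would make the statement trivial by Cartan's Theorem B. The Stein cover only serves to build the cocycle; the $\bar\partial$-equation must be solved on all of $X$.
\item The alternative through Nakano positivity of $F_q$ on $Y$ is not a shortcut. For $q\geq 1$, singular fibres and singular $Y$, it would require positivity of singular metrics on the torsion-free sheaves $F_q$ together with a vanishing theorem for them. Your proposal supplies neither.
\end{itemize}
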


In \cite{QZ26}, the second author and the third author unified the results of Koll\'ar, Ohsawa, Fujino, and Matsumura. We have the following result.

\begin{theorem}[{\cite[Theorem 1.8]{QZ26}}]
    Let $\pi:X\rightarrow Y$ be a proper surjective holomorphic map from a weakly pseudoconvex K\"ahler manifold $X$ to an analytic space $Y$ with a K\"ahler metric $\sigma$.
    Let $L$ be a line bundle on $X$ equipped with a singular Hermitian metric $h$ satisfying $\image\Theta_{L,h}\geq\pi^*\sigma$. Then
    $$H^p(Y,R^q\pi_*(K_X\otimes L\otimes\mathcal{I}(h)))=0 \quad \text{for any } p\geq 1 \text{ and } q\geq0.$$
\end{theorem}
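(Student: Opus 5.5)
We prove the Qing--Zhou theorem stated above: $H^p(Y,R^q\pi_*(K_X\otimes L\otimes\mathcal{I}(h)))=0$ for $p\ge1$, $q\ge0$. The strategy is the classical one of Koll\'ar and Takegoshi. We combine a \emph{Leray spectral sequence} argument with an \emph{$L^2$ harmonic representation / injectivity} argument. We run the whole argument relative to a Stein covering of $Y$, since $Y$ is only an analytic space and $X$ is only weakly pseudoconvex.

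\textbf{Step 1 (relative $L^2$ vanishing on Stein pieces).} Fix a locally finite Stein open cover $\{V_i\}$ of $Y$. Each $\pi^{-1}(V_i)$, and every finite intersection of these, is weakly pseudoconvex K\"ahler, because we can add $\pi^*$ of a psh exhaustion of the Stein set to an exhaustion of $X$. On such a piece $U=\pi^{-1}(V)$ we show that the Dolbeault cohomology of $K_X\otimes L\otimes\mathcal{I}(h)$ is computed by $L^2$ forms with respect to $h$ and a complete K\"ahler metric. For this we regularize $h$ by the Demailly approximation $h_\varepsilon$, which has analytic singularities and satisfies $\image\Theta_{L,h_\varepsilon}\ge\pi^*\sigma-\varepsilon\omega$. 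We then use strong openness to pass from $\mathcal{I}(h_\varepsilon)$ to $\mathcal{I}(h)$; this is the standard Guan--Zhou / Cao route.

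\textbf{Step 2 (degeneration of the Leray spectral sequence).} The core claim is the Takegoshi-type torsion-freeness and decomposition: for every $q$, the sheaf $R^q\pi_*(K_X\otimes L\otimes\mathcal{I}(h))$ is represented by \emph{$\pi$-relatively harmonic} $(n,q)$-forms $u$. These satisfy $\bar\partial u=0$ and $\bar\partial^*_h u=0$, and the curvature term satisfies $\inner{[\image\Theta_{L,h},\Lambda]u,u}=0$. Using $\image\Theta_{L,h}\ge\pi^*\sigma$ and the Bochner--Kodaira--Nakano identity with twisting, we conclude that such $u$ is killed by the horizontal part of the curvature. Wedging with $\pi^*\sigma$ is then injective on these representatives. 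This is the analogue of Koll\'ar's injectivity, and it gives the degeneration
$$H^k(U,K_X\otimes L\otimes\mathcal{I}(h))\cong\bigoplus_{p+q=k}H^p(V,R^q\pi_*(\cdots)).$$
More directly, we aim for a Leray isomorphism $H^p(Y,R^q\pi_*\mathcal{F})\hookrightarrow H^{p+q}(X,\mathcal{F})$ restricted to the piece where the horizontal degree is $p$.

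\textbf{Step 3 (Nakano-type vanishing in horizontal degree).} Next we solve $\bar\partial$ with $L^2$ estimates, in the form of H\"ormander--Demailly on weakly pseudoconvex K\"ahler manifolds. The curvature bound $\pi^*\sigma$ is strictly positive in the horizontal directions. Combined with an auxiliary metric $\omega+\pi^*\sigma$, this shows that classes of horizontal degree $p\ge1$ are $\bar\partial$-exact. In other words, the image of $H^p(Y,R^q\pi_*\mathcal{F})$ inside $H^{p+q}(X,\mathcal{F})$ vanishes, and injectivity then yields $H^p(Y,R^q\pi_*\mathcal{F})=0$. This step is carried out with a sequence of metrics $\omega_\delta=\pi^*\sigma+\delta\omega$, letting $\delta\to0$ with uniform estimates, following Ohsawa and Matsumura.

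\textbf{Main obstacle.} The hardest step is Step 2 with a \emph{singular} $h$. Harmonic theory needs smooth metrics, so we must work with $h_\varepsilon$ and then pass to the limit. The difficulty is to keep $L^2$ bounds uniform in $\varepsilon$ and $\delta$, while the curvature positivity degenerates by $\varepsilon\omega$ in the fiber directions. We would first use the weighted equisingular approximation, in which $\mathcal{I}(h_\varepsilon)=\mathcal{I}(h)$. Then we would prove injectivity through the limit of harmonic forms, extracting weak limits via the Banach--Alaoglu theorem. Finally, strong openness guarantees that the limiting multiplier ideal is exactly $\mathcal{I}(h)$.
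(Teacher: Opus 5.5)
Your overall architecture is the right one, and it is the one the paper uses. You embed $H^p(Y,R^q\pi_*\mathcal{F})$ into $H^{p+q}(X,\mathcal{F})$, with $\mathcal{F}=K_X\otimes L\otimes\mathcal{I}(h)$, and then kill the image by an $L^2$ estimate that exploits the horizontal positivity of $\pi^*\sigma$. In the paper this is Theorem~\ref{thm:construction of representatives} combined with the proof of Theorem~\ref{MainThm2'}, which covers this statement for smooth $Y$ as the case $l=m$.

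However, neither load-bearing step is carried out, and the mechanism you give for the first one is false. For an $(n,q)$-form $u$, the product $\pi^*\sigma\wedge u$ has type $(n+1,q+1)$ and is zero. Applying it to $*u$ instead does not help. You derived $\inner{[\pi^*\sigma,\Lambda]u,u}=0$, and at a point where $\pi$ is submersive this forces $u$ to involve only fibre directions $d\bar z_I$. Then $*u$ already contains every horizontal $dz_j$, so $\pi^*\sigma\wedge *u=0$. Wedging with $\pi^*\sigma$ therefore annihilates exactly the representatives you care about.

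Your displayed decomposition over a Stein $V$ is also vacuous. By Grauert and Cartan B, $H^p(V,R^q\pi_*\mathcal{F})=0$ for $p\ge1$, so the whole difficulty is global on $Y$. What is missing is the construction of the map itself. One needs harmonic spaces $\mathcal{H}^{n,q}(\pi^{-1}(W),L,h,\Phi)$ for the \emph{singular} metric $h$ itself, isomorphic to $H^q(\pi^{-1}(W),\mathcal{F})$ and independent of the exhaustion $\Phi$, hence compatible with restriction. Only this turns a \v{C}ech cocycle $\{\alpha_{i_0\ldots i_p}\}$ of $R^q\pi_*\mathcal{F}$ into a cocycle of \emph{forms}. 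Then $b=\sum\bar\partial\pi^*\rho_{i_0}\wedge\cdots\wedge\bar\partial\pi^*\rho_{i_{p-1}}\wedge\mathcal{H}(\alpha_{i_0\ldots i_p})$ is defined, and one can show that $\{\alpha\}\mapsto[b]$ is injective.

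Your fallback (harmonic forms for $h_\varepsilon$, then Banach--Alaoglu) cannot supply this. Weak limits preserve $\bar\partial u=0$ and $\bar\partial^*u=0$, but not the non-triviality of cohomology classes. Injectivity is a statement about \v{C}ech cocycles on $Y$, which such limits do not address. In the paper, regularization of $h$ enters only when solving $\bar\partial u=b$, where weak limits harmlessly preserve the equation.

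Step 3 has a matching gap. Since $\pi^*\sigma$ vanishes in fibre directions, $[\pi^*\sigma,\Lambda]$ is not invertible on $(n,p+q)$-forms. ``Classes of horizontal degree $p$'' is therefore meaningless until you say which representative you solve for and why $\inner{[\pi^*\sigma,\Lambda]^{-1}b,b}$ is integrable.

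The needed input is the pointwise estimate of Lemma~\ref{lemma:computation}. For $b$ as above, with its $p\ge1$ horizontal factors $\bar\partial\pi^*\rho_j$, one has $\inner{[\pi^*\sigma,\Lambda]^{-1}b,b}\le C(\lambda_1+\cdots+\lambda_p)^{-1}\sum|\bar\partial\rho_{i_0}\wedge\cdots\wedge\bar\partial\rho_{i_{p-1}}|^2_{\omega_Y}|\mathcal{H}(\alpha_{i_0\ldots i_p})|^2_{h}$. It is obtained by bounding the minors $\det(\pi_J,z_K)$ by the eigenvalues $\tau_l$ of $\pi^*\sigma$.

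With this estimate one proceeds as follows:
\begin{itemize}
\item Take equisingular $h_i\uparrow h$ with $\image\Theta_{L,h_i}\ge\pi^*\sigma-\delta_i\omega_X$.
\item Solve $\bar\partial u_{c,i}+\sqrt{\delta_i(p+q)}\,v_{c,i}=b$ on $X_c\setminus Z_i$.
\item Twist by $e^{-\chi\circ\Phi}$ so that the bound is uniform in $c$.
\item Take weak limits.
\end{itemize}
Your $\omega_\delta=\pi^*\sigma+\delta\omega$ scheme would need exactly this kind of estimate to stay uniform as $\delta\to0$, and the proposal does not provide it.
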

One can see \cite{MZ20,IMW24} for the case $q=0$ and the connection between vanishing theorems and Berndtsson's result on positivity of direct image sheaves.\\

Natural questions are whether the Kawamata-Viehweg type vanishing theorem holds for higher direct images in  Koll\'ar type and whether there is a unified and deeper version of the above important  vanishing theorems on compact K\"ahler manifolds.
Giving a positive answer to both of the questions, we develop some new ideas and methods, and establish a general Kawamata-Viehweg-Koll\'ar-Nadel type vanishing theorem for higher direct images on compact K\"ahler manifolds as follows.

\begin{theorem}[=Theorem \ref{MainThm1'}] \label{MainThm1}
    Let $\pi:X\rightarrow Y$ be a holomorphic surjection from a compact K\"ahler manifold $X$ of dimension $n$ to a compact K\"ahler manifold $Y$ of dimension $m$.
    Let $L$ be a holomorphic Hermitian line bundle over $X$ equipped with a singular Hermitian metric $h$ and $T$ a closed positive $(1,1)$-current on $Y$.
    We assume that $\image\Theta_{L,h}\geq \pi^*T$. Then
    $$H^p(Y,R^q\pi_*(K_X\otimes L\otimes\mathcal{I}(h)))=0 \quad \text{for any } p\geq m-\nd(T)+1 \text{ and } q\geq0. $$

\end{theorem}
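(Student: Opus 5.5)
We plan to combine Cao's argument (Theorem \ref{CGZ vanishing}) on the base $Y$ with the Koll\'ar-type fiber decomposition from \cite{QZ26}. Write $\mathcal{F}^q:=R^q\pi_*(K_X\otimes L\otimes\mathcal{I}(h))$. The first step is to reduce to a statement about $\bar\partial$-cohomology on $X$. We fix a Stein (or finite coordinate) covering of $Y$ and use $L^2$-Dolbeault resolutions of $K_X\otimes L\otimes\mathcal{I}(h)$ built from approximations $h_\varepsilon$ of $h$ (Demailly regularization with equisingularity, so that $\mathcal{I}(h_\varepsilon)=\mathcal{I}(h)$ by strong openness \cite{GZ15-a}). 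Via a Leray-type spectral sequence, $H^p(Y,\mathcal{F}^q)$ is then represented by $L^2$ $(n,p+q)$-forms. Following \cite{QZ26}, the key structural input is a harmonic-theoretic splitting: each class in $H^p(Y,\mathcal{F}^q)$ is represented by a form $u$ that is, fiberwise, a $\bar\partial$-harmonic $(n,q)$-piece wedged with a $(0,p)$-form pulled back from $Y$, i.e. $u=\ast$-harmonic of type ``$q$ vertical, $p$ horizontal.'' This gives an injection $H^p(Y,\mathcal{F}^q)\hookrightarrow H^{p+q}$ of a suitable $L^2$ complex in which only the $p$ horizontal directions see the curvature $\pi^*T$.

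The second step is the curvature estimate on $Y$. Since $\nd(T)=k$, Cao's method (following Demailly--Peternell and Boucksom) gives: for any $\delta>0$ there are quasi-psh regularizations $T_\delta=T+\ddbar\psi_\delta\geq-\delta\omega_Y$ on $Y$, with analytic singularities, such that outside a set of small measure the eigenvalues $\lambda_1\leq\dots\leq\lambda_m$ of $T_\delta$ with respect to $\omega_Y$ satisfy $\lambda_{m-k+1}\geq c>0$ on a set whose complement has volume tending to $0$; this is exactly the meaning of $\nd(T)\geq k$ (the nonvanishing of $\langle T^k\rangle\cdot\omega_Y^{m-k}$). Modifying $h$ by $\pi^*\psi_\delta$ (allowed since $\mathcal{I}(h e^{-\pi^*\psi_\delta})$ can be kept equal to $\mathcal{I}(h)$ by strong openness and choosing $\psi_\delta$ with small Lelong numbers), we get $\image\Theta_{L,h_\delta}\geq\pi^*T_\delta$. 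For a $(0,p)$ horizontal form with $p\geq m-k+1$, the Bochner--Kodaira--Nakano curvature term $\langle[\pi^*T_\delta,\Lambda]u,u\rangle$ is bounded below by $(\lambda_1+\dots+\lambda_p)|u|^2\geq(\lambda_{m-k+1}-(p-1)\delta)|u|^2$, since any $p$ eigenvalues include at least one of the top $k$.

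The third step is the $L^2$ estimate and limiting argument: solve $\bar\partial v_\delta=u-\text{(error)}$ with the twisted Bochner--Kodaira inequality with respect to a complete K\"ahler metric on $X\setminus\pi^{-1}(Z)$ ($Z$ the singular locus of $\psi_\delta$ plus degeneracy of $\pi$), obtaining $\|v_\delta\|$ bounded and an error $\|w_\delta\|\to0$, where the error is controlled by $\int|u|^2$ over the bad set where $\lambda_{m-k+1}<c$, whose measure tends to $0$; uniform integrability of the harmonic representative then kills this. Closedness of the image of $\bar\partial$ in the $L^2$ complex (finite-dimensionality of the cohomology, as in \cite{QZ26}) turns approximate exactness into exactness, so $u=0$ in $H^p(Y,\mathcal{F}^q)$.

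We expect the main obstacle to be the second/third step interface: Cao's estimate controls eigenvalues only in measure on $Y$, while the form $u$ lives on $X$ and its fiberwise harmonic part may concentrate over the bad set in $Y$. To handle this we will try to push forward $|u|^2$ fiberwise to an $L^1$ density on $Y$ (using the harmonic/horizontal decomposition, its fiber integral is a fixed integrable function independent of $\delta$) and then apply absolute continuity of the integral. Ensuring the decomposition survives the metric changes $h\to h_\delta$ uniformly in $\delta$ is the delicate technical point.
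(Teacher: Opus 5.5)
Your architecture is the paper's: the injective map $\varphi_{p,q}$ with representatives $b=\sum\bar\partial\pi^*\rho_{i_0}\wedge\cdots\wedge\bar\partial\pi^*\rho_{i_{p-1}}\wedge\mathcal{H}(\alpha_{i_0\ldots i_p})$, Cao's regularization $\varphi_k$ of the potential of $T$ pulled back to $X$, twisted $L^2$ estimates off an analytic set with a correction term, and the Hausdorff property. The gap is in the quantitative core of your second and third steps.

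Cao's Monge--Amp\`ere mass-concentration construction does not give $\lambda_{m-k+1}\geq c$ with $c>0$ independent of the regularization parameter. Nor is that "the meaning of $\nd(T)\geq k$": $\langle T^k\rangle\neq 0$ is an integral condition. What it actually yields (Lemma~\ref{lemma:regularization1}) is only:
\begin{itemize}
\item $\lambda_{1,k}\geq-\varepsilon_k-C/k-\tau_k$ with $\varepsilon_k\gg\tau_k+1/k$;
\item $\lambda_{p,k}+2\varepsilon_k\geq\varepsilon_k^{\gamma}$ outside $U_k$, where $\Vol(U_k)\leq\varepsilon_k^{\beta}$ and $0<\gamma<1$.
\end{itemize}
So the lower bound on the good set tends to $0$. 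Consequently your claim that the solutions stay bounded is unsupported. One must twist by $4\varepsilon_k\omega_Y$ to make $[\pi^*(\alpha+\ddbar\varphi_k+4\varepsilon_k\omega_Y),\Lambda_{\omega_X}]$ positive everywhere. The resulting bound $C_k$ is only of order $\varepsilon_k^{-\gamma}$ off $\pi^{-1}(U_k)$ and $\varepsilon_k^{-1}$ on it.

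The argument closes only because the correction term has squared norm $\lesssim\varepsilon_k C_k$. In that product the weight $\varepsilon_k/(\widetilde\lambda_{1,k}+\cdots+\widetilde\lambda_{p,k})$ is bounded on $\pi^{-1}(U_k)$ and at most $\varepsilon_k^{1-\gamma}$ off it. The missing idea is this balance: the exponent $\gamma<1$ means the positivity gained on the good set beats the negativity $\varepsilon_k$ that must be compensated. Your plan never uses it, because you assumed a fixed $c$.

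The point you flag as delicate is also not resolved by your remedy. The density to be controlled over $\pi^{-1}(U_k)$ is $|\mathcal{H}(\alpha_{i_0\ldots i_p})|^2_{h_k}$, with $h_k=h_\infty e^{-(\psi'+\pi^*\varphi_k)}$ and $\psi'=\psi-\pi^*\varphi$. This changes with $k$, so there is no fixed integrable fiber integral to which absolute continuity applies. What is needed is a uniform higher-integrability bound $\int_U|f|^2e^{-(\psi'+\pi^*\varphi_k)}\leq C_{\|f\|_{L^\infty}}\bigl(\int_U|f|^2e^{-(1+s_1)(\psi'+\pi^*\varphi)}\bigr)^{1/(1+s_1)}$. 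It comes from Skoda's uniform integrability built into Cao's choice of $\varphi_k$, from strong openness, and from holomorphy of $*\mathcal{H}(\alpha)$ for the $L^\infty$ bound. It also gives the uniform bound on $\int|b|^2_{h_k}$ that the $\varepsilon_k^{1-\gamma}$ term needs.

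Likewise, $\mathcal{I}(h_k)=\mathcal{I}(h)$ is not obtained by choosing $\psi_\delta$ with small Lelong numbers. Here $\varphi_k-\varphi$ is not quasi-psh, $\varphi_k$ is in general not comparable to $\varphi$, and both inclusions are needed for the pulled-back weights on $X$. Lemma~\ref{lemma:regularization2} gets the reverse inclusion from three ingredients: $\varphi_k$ is more singular than the quasi-equisingular $\widetilde\varphi_k$; $\int_Y(e^{-t\varphi}-e^{-t\max\{\varphi,\widetilde\varphi_k\}})\to 0$; and this integrability is transferred to $X$ by the coarea formula and H\"older's inequality.
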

We remark that if $T=\image\Theta_{F,h_F}$ for some pseudo-effective line bundle $(F,h_F)$ on a projective manifold,
the definition of numerical dimension $\nd(T)$ of $T$ given by Cao coincides with Tsuji's definition of numerical dimension $\nu_\textup{num}(F,h_F)$. (See \cite[Proposition 4.2]{Cao14}).

The strategy of the proof of Theorem~\ref{MainThm1} is based on recently developed methods and results, by developing the Bochner type technique with special singular hermitian metrics constructed by means of the Calabi-Yau theorem and Demailly's equisingular approximation theorem, and Guan-Zhou's strong openness theorem.
The key points are cohomology class representatives for higher direct images of pseudo-effective line bundles (see Theorem~\ref{thm:construction of representatives})
and to consider the multiplier ideal sheaves of the pull-back of these special metrics by the strong openness property established in \cite{GZ15-a}, see Lemma~\ref{lemma:regularization2}.

As a counterpart to Theorem~\ref{K-V vanishing}, we give the following vanishing theorem over projective manifolds. 
    One can derive Theorem~\ref{K-V vanishing} directly by taking $\pi$ to be the identity in Theorem~\ref{MainThm4}.

\begin{theorem}[=Theorem \ref{MainThm4'}]\label{MainThm4}
    Let $X$ be a compact K\"ahler manifold and $Y$ a projective manifold of dimension $m$. Let $\pi:X\rightarrow Y$ be a holomorphic surjection and $L$ a line bundle over $X$.
    Assume that $\alpha$ is a nef class on $Y$ and $c_1(L)-\pi^*\alpha=\{\theta+\ddbar\psi\}$, where $\theta$ is a smooth real $(1,1)$-form and $\psi$ is a function on $X$ such that $\theta+\ddbar\psi\geq 0$ in the sense of currents.
    Then
    $$H^p(Y,R^q\pi_*(K_X\otimes L\otimes\mathcal{I}(\psi)))=0 \quad \text{for any } p\geq m-\nd(\alpha)+1 \text{ and } q\geq0. $$    
    In particular, if $L$ is a nef line bundle on $Y$, then
    $$H^p(Y,R^q\pi_*K_X\otimes L)=0 \quad \text{for any } p\geq m-\nd(L)+1 \text{ and } q\geq0. $$ 
    
\end{theorem}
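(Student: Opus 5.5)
Since $Y$ is projective, hence compact Kähler, the plan is to reduce Theorem~\ref{MainThm4} to Theorem~\ref{MainThm1} by approximating the nef class $\alpha$ with closed positive currents, or more precisely to rerun the argument of Theorem~\ref{MainThm1} with an approximate positivity hypothesis. The difficulty is that $\alpha$ is only nef, so in general it has no closed positive representative $T$ with $\nd(T)=\nd(\alpha)$ and $\image\Theta_{L,h}\geq\pi^*T$ exactly. We therefore work with the family $\alpha_\varepsilon=\alpha+\varepsilon\{\omega_Y\}$, where $\omega_Y$ is a Kähler form on $Y$, which may be taken to be the curvature of an ample line bundle $A$ since $Y$ is projective. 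Each $\alpha_\varepsilon$ is Kähler, so by nefness there are smooth representatives $\eta_\varepsilon\in\alpha$ with $\eta_\varepsilon\geq-\varepsilon\omega_Y$.

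The first step is to put a metric on $L$. Write $c_1(L)=\pi^*\alpha+\{\theta+\ddbar\psi\}$. Choose a smooth metric $h_0$ on $L$ with curvature $\pi^*\eta_\varepsilon+\theta$ plus a $\ddbar$-exact correction absorbed into $\psi$. Then $h_\varepsilon=h_0e^{-\psi}$ satisfies
\[
\image\Theta_{L,h_\varepsilon}=\pi^*\eta_\varepsilon+\theta+\ddbar\psi\geq-\varepsilon\pi^*\omega_Y .
\]
Its multiplier ideal sheaf is $\mathcal{I}(\psi)$, independent of $\varepsilon$, because the smooth part of the weight does not affect integrability.

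The second step replaces the input ``$\image\Theta_{L,h}\geq\pi^*T$ with $\nd(T)\geq k$'' of Theorem~\ref{MainThm1}, where $k=\nd(\alpha)$. For this we use the Calabi--Yau theorem on $Y$ as in Demailly--Peternell and Cao. We solve
\[
(\eta_\varepsilon+\varepsilon\omega_Y+\ddbar u_\varepsilon)^m=c_\varepsilon\,\omega_Y^m,\qquad c_\varepsilon=\frac{\int_Y\alpha_\varepsilon^m}{\int_Y\omega_Y^m}\geq C\varepsilon^{m-k},
\]
which gives Kähler forms $\beta_\varepsilon\in\alpha_\varepsilon$. The standard eigenvalue argument then shows that, outside a set of small $\omega_Y$-measure, the $m-k+1$ smallest eigenvalues of $\beta_\varepsilon$ with respect to $\omega_Y$ are small while the sum of the largest $k$ eigenvalues is bounded below. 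Pulling back by $\pi$ and twisting $h_\varepsilon$ by $e^{-\pi^*u_\varepsilon}$ gives metrics $\tilde h_\varepsilon$ on $L$ whose curvature is at least $\pi^*\beta_\varepsilon-2\varepsilon\pi^*\omega_Y$ plus a positive part. Because $\pi^*u_\varepsilon$ is smooth, the multiplier ideal is unchanged. This is precisely the situation in which the Bochner-type estimate behind Theorem~\ref{thm:construction of representatives} and the proof of Theorem~\ref{MainThm1} operates: for a $\bar\partial$-closed $(n,q)$-form representing a class in $H^p(Y,R^q\pi_*(\cdots))$ with $p\geq m-k+1$, the curvature term in the horizontal directions dominates the error $\varepsilon$. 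The representatives are built as harmonic forms relative to $\pi^*\beta_\varepsilon$, and Lemma~\ref{lemma:regularization2} with strong openness is used to pass from the singular $\psi$ to the equisingular approximations $\psi_\delta$ without changing $\mathcal{I}(\psi)$. Letting $\varepsilon\to0$ and $\delta\to0$ should show that every class is a limit of $\bar\partial$-exact forms, and the finite-dimensionality or Hausdorffness of the relevant cohomology then gives vanishing.

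The expected main obstacle is this uniformity step. When $\nd$ is defined through the current $T$, the positivity $\image\Theta\geq\pi^*T$ is exact. Here we only have $-\varepsilon\pi^*\omega_Y$ errors, so we must check that the lower bound on the horizontal curvature sum over $p$ eigenvalues, which is of order $\varepsilon^{0}$ on most of $Y$, beats the negative error uniformly, including after the equisingular regularization of $\psi$. I would handle this first by following Cao's proof of Theorem~\ref{CGZ vanishing} line by line in the fibered setting. If that fails, I would fall back on a projective argument. Using Serre vanishing for $A$, hyperplane sections of $Y$ cut by $|A|$, and induction on $m$ in Kawamata--Viehweg style, together with the case $\nd(\alpha)=m$ (big and nef, reduced to the case of a current $T$ with analytic singularities and $\nd(T)=m$ in Theorem~\ref{MainThm1}), one would reduce to Theorem~\ref{MainThm1}.

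The particular case follows by taking the nef class on $Y$ to be $c_1(L)$, replacing the bundle by $\pi^*L$ with $\theta=0$ and $\psi=0$, so that $\mathcal{I}(\psi)=\mathcal{O}_X$, and applying the projection formula $R^q\pi_*(K_X\otimes\pi^*L)=R^q\pi_*K_X\otimes L$.
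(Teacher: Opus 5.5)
Your fallback is essentially the paper's proof. The route you lead with, however, has a genuine gap, and you have located the obstacle in the wrong place.

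\textbf{The eigenvalue step is misstated but not the real problem.} For the horizontal forms $b$, Lemma~\ref{lemma:computation} bounds $\langle B^{-1}b,b\rangle$ by $(\lambda_1+\cdots+\lambda_p)^{-1}$, which involves the $p$ \emph{smallest} eigenvalues. What is needed, and what Calabi--Yau mass concentration gives (cf.\ Lemma~\ref{lemma:regularization1}(3)), is $\lambda_p\geq\varepsilon^{\gamma}$ with $\gamma<1$, for $p\geq m-k+1$, off a set of measure $\leq\varepsilon^{\beta}$. If the $m-k+1$ smallest eigenvalues were small, as you write, the estimate would fail at $p=m-k+1$. Once this is stated correctly and $\beta_\varepsilon$ is replaced by $\beta_\varepsilon+\varepsilon\omega_Y$, the error $\varepsilon$ is beaten exactly as in Step~3 of the proof of Theorem~\ref{MainThm1'}.

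\textbf{The real gap is uniformity of the weights.} Your metrics $h_\varepsilon e^{-\pi^*u_\varepsilon}$ change with $\varepsilon$. Smoothness of $u_\varepsilon$ fixes the multiplier ideal for each single $\varepsilon$, but that is not enough. To get convergence in a fixed $L^2(h)$ and $\varepsilon C_\varepsilon\to0$, you need $\sup u_\varepsilon=0$ and a uniform bound on $\int_X|\mathcal{H}(\alpha_{i_0\ldots i_p})|^2_h e^{-\pi^*u_\varepsilon}$.

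In Theorem~\ref{MainThm1'} this bound comes from the structure of Cao's $\varphi_k$. Most of the weight is an equisingular approximation of the \emph{fixed} potential of $T$, whose singularities are already built into $\mathcal{I}(h)$ because $\image\Theta_{L,h}\geq\pi^*T$. Only a small multiple $s$ of the Monge--Amp\`ere potential is added, and Skoda's uniform integrability controls that part.

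For a merely nef class there is no such fixed current; you are right that none with $\nd(T)=\nd(\alpha)$ need exist. So the remaining $(1-s)$-part must be a near-positive potential $\eta+\ddbar f_\varepsilon\geq-\varepsilon\omega_Y$ with $\sup f_\varepsilon=0$. These converge to potentials of positive currents in $\alpha$, which can have large Lelong numbers. For example, for multiples $c$ of the Demailly--Peternell--Schneider nef class on the ruled surface over an elliptic curve, $c[C]$ is the only positive current. Then $e^{-(1-s)f_\varepsilon}$ need not be uniformly integrable, and you have no control of $\varepsilon C_\varepsilon$. This is presumably why the paper avoids the direct route and uses projectivity of $Y$.

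Two smaller corrections. The representatives are the fixed $(\omega_X,h)$-harmonic forms of Theorem~\ref{thm:construction of representatives}, not forms ``harmonic relative to $\pi^*\beta_\varepsilon$'', which is degenerate along the fibres. Lemma~\ref{lemma:regularization2} concerns pulled-back base potentials, not the approximation of $\psi$.

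\textbf{The fallback.} This is what the paper does: first the case $\nd(\alpha)=m$ via Theorem~\ref{MainThm1'}, then induction on $\dim Y$ using a general $B\in|A|$ and $D=\pi^{-1}(B)$.

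Your big case misses its key point. A K\"ahler current with analytic singularities in $\alpha$ typically has positive Lelong numbers, along the non-K\"ahler locus. Using it only gives vanishing for the smaller sheaf built from $\mathcal{I}(\psi+\pi^*\varphi)$. The paper instead uses nefness to choose $\eta+\ddbar\varphi\geq\varepsilon_0\omega_Y$ with $\max_y\nu(\varphi,y)$ arbitrarily small. Strong openness then gives $\mathcal{I}(\psi+\pi^*\varphi)=\mathcal{I}(\psi)$, and the paper applies Theorem~\ref{MainThm1'} with $T=\varepsilon_0\omega_Y$.

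In the inductive step you must choose $B$ so that:
\begin{itemize}
\item $D$ is smooth and $\mathcal{I}(\psi|_D)=\mathcal{I}(\psi)|_D$;
\item $D$ contains no associated prime of $\mathcal{O}_X/\mathcal{I}(\psi)$;
\item $B$ avoids the associated primes of $R^q\pi_*(K_X\otimes L\otimes\mathcal{I}(\psi))$, so that the direct-image sequences stay short exact.
\end{itemize}
Serre vanishing then kills $H^{p+1}(Y,R^q\pi_*(\cdot)\otimes A)$; your choice works, where the paper instead cites Theorem~\ref{MainThm1'}. Finally, $\nd(\alpha|_B)=\nd(\alpha)$ when $\nd(\alpha)<m$ closes the induction. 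Your deduction of the particular case via the projection formula is fine.
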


When $Y$ is noncompact, we have the following Girbau type theorem, which can be seen as a relative version.

\begin{theorem}[=Theorem \ref{MainThm2'}] \label{MainThm2}
    Let $\pi:X\rightarrow Y$ be a proper holomorphic surjection from a weakly pseudoconvex K\"ahler manifold $X$ of dimension $n$ to a complex manifold $Y$ of dimension $m$.
    Let $L$ be a holomorphic Hermitian line bundle over $X$ equipped with a singular Hermitian metric $h$ and $\sigma$ a continuous real semi-positive $(1,1)$-form on $Y$ satisfying $\sigma^l\neq 0$ everywhere on $Y$.
    We assume that $\image\Theta_{L,h}\geq \pi^*\sigma$. Then
    $$H^p(Y,R^q\pi_*(K_X\otimes L\otimes\mathcal{I}(h)))=0 \quad \text{for any } p\geq m-l+1 \text{ and } q\geq0. $$

\end{theorem}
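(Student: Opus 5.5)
The plan is to prove Theorem~\ref{MainThm2} by a Leray--Čech reduction to a local vanishing on Stein opens of $Y$, followed by an $L^2$ estimate adapted to partial positivity. The partial positivity comes from $\sigma$ having rank at least $l$.

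\textbf{Reduction.} Since $R^q\pi_*(K_X\otimes L\otimes\mathcal{I}(h))$ is coherent (Grauert's direct image theorem together with coherence of $\mathcal{I}(h)$), fix a locally finite Stein cover $\{V_j\}$ of $Y$; all finite intersections are Stein. I would first show that for each Stein open $V\subset Y$ the space $\pi^{-1}(V)$ is weakly pseudoconvex Kähler. Then Hörmander--Demailly $L^2$ theory with the twisted metric $h e^{-\chi\circ\rho}$, with $\rho$ a psh exhaustion of $V$, gives the Takegoshi/Ohsawa-type statement
\[
H^{s}(\pi^{-1}(V),K_X\otimes L\otimes\mathcal{I}(h))\cong H^0(V,R^s\pi_*(\cdots)).
\]
Here one uses \cite[Theorem 1.8]{QZ26}, or its proof, applied to $\image\Theta_{L,h}+\pi^*\ddbar\chi(\rho)\geq\pi^*(\text{Kähler form})$. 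The Leray spectral sequence for $\pi$ restricted to $V$ then degenerates. Globally this identifies $H^p(Y,R^q\pi_*\mathcal{F})$, where $\mathcal{F}=K_X\otimes L\otimes\mathcal{I}(h)$, with the $p$-th cohomology of the Čech complex $C^\bullet(\{V_j\},R^q\pi_*\mathcal{F})$. Using the representatives of higher direct images (Theorem~\ref{thm:construction of representatives}), each class is represented by a $\bar\partial$-closed $L^2$ $(n,q)$-form $u$ with values in $L$ that is harmonic fiberwise. The Čech–Dolbeault double complex then converts a class in $H^p(Y,R^q\pi_*\mathcal{F})$ into a global $L$-valued $(n,p+q)$-form $v$ on $X$ of a special type: $v=\pi^*(\cdot)\wedge u$ locally, i.e.\ it carries $p$ "horizontal" $d\bar y$ directions.

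\textbf{Key estimate.} Choose a complete Kähler metric $\omega$ on $X$ (possible by weak pseudoconvexity) and twist by $e^{-\chi(\Phi)}$ for a psh exhaustion $\Phi$ of $X$. On forms of the above type with $p\geq m-l+1$ horizontal antiholomorphic degree, the curvature operator $[\pi^*\sigma,\Lambda_{\omega_\varepsilon}]$ is bounded below. For this I use the metric $\omega_\varepsilon=\omega+\varepsilon^{-1}\pi^*\omega_Y$. Since $\sigma$ has at least $l$ positive eigenvalues at every point, the sum of any $p$ eigenvalues of $\sigma$ relative to $\omega_Y$ is at least $(p-(m-l))\lambda_{\min}>0$ locally uniformly, where $\lambda_{\min}$ is the smallest of the $l$ largest eigenvalues. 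This is the usual Girbau argument. Continuity of $\sigma$ makes $\lambda_{\min}$ locally bounded below, and the $\chi(\Phi)$-twist absorbs the non-uniformity.

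To handle the singular $h$, I would regularize $h$ by Demailly's equisingular approximation $h_\delta$. This gives curvature $\geq\pi^*\sigma-\delta\omega$ and $\mathcal{I}(h_\delta)=\mathcal{I}(h)$, using strong openness \cite{GZ15-a} as in Lemma~\ref{lemma:regularization2}. I would then solve $\bar\partial w=v$ with uniform $L^2$ bounds and pass to a weak limit as $\delta\to0$. The solution $w$ has the same type, so it descends to a Čech cochain exhibiting $v$ as a coboundary, which gives the vanishing.

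\textbf{Main obstacle.} The hardest step is the Girbau-type curvature estimate in the presence of the error $-\delta\omega$ and the mixed degree $(n,p+q)$. The positivity only sees the $p$ horizontal directions, while the $q$ vertical ones contribute nothing and the error term is not controlled there. I would try to show that the fiberwise-harmonic representatives make the vertical part of the Bochner term nonnegative, via the Ohsawa–Takegoshi/Takegoshi harmonic theory. This requires choosing $\varepsilon\to0$ in $\omega_\varepsilon$ so that the horizontal eigenvalues dominate $\delta$ as $\delta\to0$ first. I expect this ordering of limits, together with keeping the multiplier ideal fixed, to be the main difficulty.
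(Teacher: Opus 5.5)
Your skeleton is close to the paper's. Theorem~\ref{thm:construction of representatives} sends the class to
$b=\sum\bar\partial\pi^*\rho_{i_0}\wedge\cdots\wedge\bar\partial\pi^*\rho_{i_{p-1}}\wedge\mathcal{H}(\alpha_{i_0\ldots i_p})$. For $p\geq m-l+1$ one has $\lambda_1+\cdots+\lambda_p\geq\lambda_{m-l+1}>0$, so Lemma~\ref{lemma:computation} bounds $\langle B_1^{-1}b,b\rangle$ locally, where $B_1=[\pi^*\sigma,\Lambda_{\omega_X}]$. The $\chi\circ\Phi$ twist then makes this integrable uniformly on the sublevel sets $X_c$. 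You do not need to redo the Stein/Leray reduction.

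Your final step (``the solution $w$ has the same type, so it descends to a \v{C}ech cochain'') is unjustified, since an $L^2$ solution of $\bar\partial w=b$ has no reason to have that form. What concludes the argument is the injectivity of $\varphi_{p,q}$ in Theorem~\ref{thm:construction of representatives}(2): once $[b]=0$ in $H^{p+q}(X,K_X\otimes L\otimes\mathcal{I}(h))$, the original class is zero.

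The genuine gap is the step you call the main obstacle. You leave it open, and the devices you propose would not close it.
\begin{itemize}
\item With $\image\Theta_{L,h_i}\geq\pi^*\sigma-\delta_i\omega_X$, the operator $[\image\Theta_{L,h_i},\Lambda]$ on $(n,p+q)$-forms is only bounded below by $B_1-\delta_i(p+q)\Id$. This is not semi-positive in general, because $B_1$ vanishes in vertical (and $\sigma$-null) directions. So the exact-solution $L^2$ theorem does not apply.
\item The metric $\omega_\varepsilon=\omega+\varepsilon^{-1}\pi^*\omega_Y$ changes nothing vertically, where $\omega_\varepsilon=\omega$. Moreover, all eigenvalues of $\pi^*\sigma$ relative to $\omega_\varepsilon$ are at most $\varepsilon\lambda_m$, so shrinking $\varepsilon$ works against you.
\item Fiberwise harmonicity of $b$ cannot make a Bochner term nonnegative. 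The a priori inequality must hold for every test form $\alpha\in\operatorname{Dom}\bar\partial^*$, and $b$ enters only through $|\langle b,\alpha\rangle|^2\leq\langle B^{-1}b,b\rangle\langle B\alpha,\alpha\rangle$.
\end{itemize}

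The missing idea is to give up exact solvability at each stage and use Lemma~\ref{lemma:solve equation} with its correcting term.
\begin{itemize}
\item Since $[\image\Theta_{L,h_i},\Lambda]+\delta_i(p+q)\Id\geq B_1\geq 0$, you get $\int_{X_c}\langle([\image\Theta_{L,h_i},\Lambda]+\delta_i(p+q)\Id)^{-1}b,b\rangle\leq\int_{X_c}\langle B_1^{-1}b,b\rangle\leq C$.
\item This yields $\bar\partial u_i+\sqrt{\delta_i(p+q)}\,v_i=b$ on $X_c\setminus Z_i$ with $\|u_i\|^2+\|v_i\|^2\leq C$ uniformly in $i$. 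The equation extends across $Z_i$ by Lemma~\ref{lemma:extension}.
\item Weak limits as $\delta_i\to0$ kill the correction term, giving $\bar\partial u_c=b$ on $X_c$ with a bound independent of $c$. Then let $c\to\infty$.
\end{itemize}

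The vertical directions never need positivity. $B_1$ is only inverted on $b$, and every component of $b$ carries $p$ horizontal directions; this is the inequality $\sum_{l\in L}\tau_l\geq\sum_{l\in K}\tau_l$ in Lemma~\ref{lemma:computation}. Likewise, keeping the multiplier ideal fixed is not a difficulty for this theorem. Since $h_i\leq h$ and $h_i$ increases to $h$, the weak limit is bounded in $L^2$ with respect to $h$ itself, so neither strong openness nor $\mathcal{I}(h_i)=\mathcal{I}(h)$ is needed.
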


The first author and the third author removed the K\"ahler condition in Nadel's vanishing theorem on holomorphically convex manifolds in \cite[Theorem 3.8]{MZ19}.
We can also obtain a similar result on higher direct images as a corollary of Theorem~\ref{MainThm1}.

\begin{theorem}[=Theorem \ref{MainThm3'}]\label{MainThm3}
    Let $X$ be a compact manifold in the Fujiki class. Let $\pi:X\rightarrow Y$ be a holomorphic surjection to a compact K\"ahler manifold $Y$ of dimension $m$,
    and let $L$ be a line bundle over $X$ equipped with a singular Hermitian metric $h$ and $T$ a closed positive $(1,1)$-current on $Y$.
    We assume that $\image\Theta_{L,h}\geq \pi^*T$. Then
    $$H^p(Y,R^q\pi_*(K_X\otimes L\otimes\mathcal{I}(h)))=0 \quad \text{for any } p\geq m-\nd(T)+1 \text{ and } q\geq0. $$

\end{theorem}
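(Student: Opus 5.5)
The plan is to reduce Theorem~\ref{MainThm3} to Theorem~\ref{MainThm1} by replacing $X$ with a Kähler modification. Since $X$ lies in the Fujiki class, there is a proper modification $\mu:\widetilde X\to X$ with $\widetilde X$ a compact Kähler manifold. Set $\widetilde\pi=\pi\circ\mu:\widetilde X\to Y$; this is again a holomorphic surjection onto $Y$. On $\widetilde X$ put the line bundle $\widetilde L=\mu^*L$ with the pulled-back metric $\widetilde h=\mu^*h$. Pulling back the curvature inequality gives
\[
\image\Theta_{\widetilde L,\widetilde h}=\mu^*\image\Theta_{L,h}\geq \mu^*\pi^*T=\widetilde\pi^*T .
\]
This uses that the pull-back of a current that is bounded below by a pulled-back positive current is well defined: locally $\varphi-\pi^*u$ is psh for a local potential $u$ of $T$, and psh functions pull back under holomorphic maps. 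Theorem~\ref{MainThm1} then gives
\[
H^p\bigl(Y,R^q\widetilde\pi_*(K_{\widetilde X}\otimes\widetilde L\otimes\mathcal I(\widetilde h))\bigr)=0 \quad\text{for } p\geq m-\nd(T)+1,\ q\geq 0 .
\]

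The next step is to identify these higher direct images with those on $X$. Here I will invoke the standard birational transformation rule for multiplier ideals, $\mu_*\bigl(K_{\widetilde X}\otimes\mathcal I(\mu^*h)\bigr)=K_X\otimes\mathcal I(h)$. This is a direct change of variables in the $L^2$ integrals, using that the Jacobian of $\mu$ is a section of $K_{\widetilde X}\otimes\mu^*K_X^{-1}$. I also need the higher direct images to vanish:
\[
R^i\mu_*\bigl(K_{\widetilde X}\otimes\mu^*L\otimes\mathcal I(\mu^*h)\bigr)=0 \quad\text{for } i>0 .
\]
This is the Grauert–Riemenschneider type vanishing for multiplier ideals. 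To prove it, I apply the Kähler $L^2$/Nadel vanishing locally over small Stein open sets $U\subset X$. The preimage $\mu^{-1}(U)$ is weakly pseudoconvex and Kähler, since a Kähler metric on $\widetilde X$ restricts to it, and $\mu^*L$ is trivial there. Adding the pull-back of a strictly psh exhaustion of $U$ supplies the needed positivity. An alternative is to quote \cite[Theorem 1.8]{QZ26} or the $q=0$ part together with Matsumura's torsion-freeness for $\mu$ over a Stein base.

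Given these two facts, the Leray spectral sequence for $\widetilde\pi=\pi\circ\mu$ degenerates. It gives
\[
R^q\widetilde\pi_*(K_{\widetilde X}\otimes\widetilde L\otimes\mathcal I(\widetilde h))\cong R^q\pi_*(K_X\otimes L\otimes\mathcal I(h)),
\]
and the vanishing transfers, proving the theorem.

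The main obstacle is the local relative vanishing $R^i\mu_*=0$ for $i>0$ with singular weights, since $\mu^*h$ has only a lower curvature bound rather than semipositivity on $\mu^{-1}(U)$. My first attempt will be to note that $\image\Theta_{L,h}\geq\pi^*T\geq0$, so $\mu^*h$ is in fact semipositive. Then I can twist by the exhaustion $e^{-\mu^*\rho}$ with $\rho$ strictly psh on $U$, and apply the Ohsawa–Matsumura type result for the proper map $\mu|_{\mu^{-1}(U)}\to U$ (the $Y=U$ case of \cite[Theorem 1.8]{QZ26}). Because $U$ is Stein, the vanishing of $H^i(\mu^{-1}(U),\cdot)$ coincides with the vanishing of the stalks of $R^i\mu_*$.
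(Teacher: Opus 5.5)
Your overall reduction is the same as the paper's. You pass to a Kähler modification $\mu:\widetilde X\to X$, apply Theorem~\ref{MainThm1} to $\pi\circ\mu$ with the pulled-back metric, and identify $\mu_*(K_{\widetilde X}\otimes\mu^*L\otimes\mathcal I(\mu^*h))$ with $K_X\otimes L\otimes\mathcal I(h)$. You then kill $R^i\mu_*$ for $i>0$ and conclude by the Leray spectral sequence.

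The one step that fails as you finally propose it is the vanishing of $R^i\mu_*\mathcal F$ for $i>0$, where $\mathcal F=K_{\widetilde X}\otimes\mu^*L\otimes\mathcal I(\mu^*h)$.

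\textbf{Why your route fails.} Applying \cite[Theorem 1.8]{QZ26} to $\mu^{-1}(U)\to U$ with $U$ Stein only gives $H^p(U,R^q\mu_*\mathcal F)=0$ for $p\geq 1$. This is automatic on a Stein space, since $R^q\mu_*\mathcal F$ is coherent. It says nothing about $H^q(\mu^{-1}(U),\mathcal F)=H^0(U,R^q\mu_*\mathcal F)$, which is what you need. A Nadel-type argument on $\mu^{-1}(U)$ does not rescue this either. The form $\mu^*\ddbar\rho$ is only semi-positive and degenerates along the exceptional locus; for the blow-up of a point it has rank one there. So an $L^2$ vanishing built on this positivity only gives vanishing in degree $n$. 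Theorem~\ref{MainThm2} applied to the identity map of $\mu^{-1}(U)$ illustrates this.

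\textbf{The argument that works.} It is the one you mention only in passing. Since $\widetilde X$ is Kähler and $\image\Theta_{\mu^*L,\mu^*h}\geq 0$, the sheaf $R^i\mu_*\mathcal F$ is torsion-free. For $i>0$ it vanishes over the dense open set where $\mu$ is biholomorphic, so it is supported on a proper analytic subset and must be zero. This is precisely the statement in Remark~\ref{Rm:representative and torsion free} that $R^q\pi_*(K_X\otimes L\otimes\mathcal I(h))=0$ for $q>n-m$, applied with $n=m$. That is what the paper invokes, and with this substitution your proof is complete.
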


Our theorems are new even for projective manifolds and unify several previously known vanishing theorems.

For the related topics about higher direct images and vanishing theorems, we refer to \cite{Kaw82,Vie82,Ohs84,Siu84,Kol86-a,Kol86-b,Eno93,Tak95,DP03,Laz04-a,Laz04-b,Cao14,Dem15,Mat16,Fuj17,Fuj18,MZ19,MZ20,FM21,ZZ22,SZ23,IMW24,FW25,QZ26} and references therein.

The remaining parts of this paper are organized as follows. In Section~\ref{Sec:Pre}, we briefly recall some basic preparatory results about the numerical dimension and $L^2$ theory for the $\bar\partial$-operator.
In Section~\ref{Sec:Girbau}, we prove Girbau type vanishing Theorem~\ref{MainThm2}. % first to explain our idea to prove the vanishing Theorem~\ref{MainThm1} in the smooth case.
Then in Section~\ref{Sec:Kawamata-Viehweg}, we prove Theorem~\ref{MainThm1} by the strong openness property and a mass concentration technique for Monge-Amp\`ere
equations. We prove Theorem~\ref{MainThm4} by induction.
In Section \ref{Sec:Application}, we prove Theorem \ref{MainThm3}.

\section{Preliminaries} \label{Sec:Pre}

In this section, we recall and obtain some basic results used in the proofs of the main results in the present paper.
In this paper, we use the terms ``(Cartier) divisors", ``(holomorphic) line bundles", and ``invertible sheaves" interchangeably.

\subsection{$L^2$-spaces and $L^2$-estimates}

Let $(X,\omega)$ be a Hermitian manifold of dimension $n$,
and let $(L,h)$ be a Hermitian holomorphic line bundle over $X$.
Let $D$ be the Chern connection of $(L,h)$.
It can be written in the unique way as $D=D_h'+\bar\partial$.

For $L$-valued $(p,q)$-forms $u$ and $v$, the global inner product
$\llangle u,v\rrangle_{h,\omega}$ is defined as
$$\llangle u,v\rrangle_{h,\omega}:=\int_X\langle u,v\rangle_{h,\omega} dV_\omega,$$
where $\inner{u,v}_{h,\omega}$ denotes the pointwise inner product induced by $h$ and $\omega$, and $dV_\omega$ is the volume form associated with $\omega$.
The $L^2$ space of $L$-valued $(p,q)$-forms with respect to $h$ and $\omega$ is defined by
$$L^{p,q}_{(2)}(X,L)_{h,\omega}:= \{u\ |\ u\ \text{is an $L$-valued $(p,q)$-form with}\  \|u\|_{h,\omega}<+\infty\}.$$

Then $\bar{\partial}$ is a densely defined closed operator on $L^{p,q}_{(2)}(X,L)_{h,\omega}$ with the domain
$$\operatorname{Dom}\bar\partial:=\{u\in L^{p,q}_{(2)}(X,L)_{h,\omega}|\ \bar\partial u\in L^{p,q+1}_{(2)}(X,L)_{h,\omega} \}.$$
Even if $\omega$ is not complete, we always have the orthogonal decomposition (see \cite{DemBig})
$$L^{p,q}_{(2)}(X,L)_{h,\omega}=\overline{\operatorname{Im}\bar\partial}
\oplus\mathscr{H}^{p,q}_{h,\omega}(X,L)
\oplus\overline{\operatorname{Im}\bar\partial^\ast_{h,\omega}},$$
where $\bar\partial^\ast_{h,\omega}$ is the Hilbert adjoint of $\bar\partial$,
and $\mathscr{H}^{p,q}_{h,\omega}(X,L)$ is the set of harmonic forms with respect to $h$ and $\omega$, that is,
$$\mathscr{H}^{p,q}_{h,\omega}(X,L):=\{u\in L^{p,q}_{(2)}(X,L)_{h,\omega}|\ \bar\partial u=0\ and\
\bar\partial^\ast_{h,\omega}u=0\}.$$
In general, the above decomposition also holds when $h$ is a singular metric such that $i\Theta_{L,h}\geq\gamma$ for some smooth $(1,1)$-form $\gamma$ (see \cite{Tak97,Mat19}).

Consider the local $L^2$-space
$$L^{p,q}_{(2,\loc)}(X,L)_{h,\omega}:= \{u\ |\ u \text{ is an $L$-valued $(p,q)$-form with } \|u\|_{K,h,\omega}<+\infty,\  \forall K\Subset X\},$$
where $\|u\|^2_{K,h,\omega}=\int_K |u|^2_{h,\omega} dV_\omega$.
We have the standard isomorphism
$$H^q(X,K_X\otimes L\otimes \mathcal{I}(h))\cong \frac{\operatorname{Ker}\bar\partial}{\operatorname{Im}\bar\partial} \text{ of } L^{n,q}_{(2,\loc)}(X,L)_{h,\omega}.$$

This isomorphism allows us to obtain our vanishing theorems by using the $L^2$ method to solve $\bar\partial$ equations. We need the following useful lemmas.

\begin{lemma}[{\cite[Proposition 3.12]{Dem16}}] \label{lemma:solve equation}
    Let $X$ be a complete K\"ahler manifold equipped with a
    (non necessarily complete) K\"ahler metric $\omega$, and let $(Q,h)$
    be a holomorphic vector bundle over $X$ equipped with a smooth Hermitian metric $h$. Assume that $\tau$ and $A$
    are smooth and bounded positive functions on $ X $ and let
    $$B:=[\tau\sqrt{-1}\Theta_{Q,h}-\sqrt{-1}
    \partial\bar\partial\tau
    -\sqrt{-1}A^{-1}\partial\tau\wedge\bar\partial\tau,\Lambda ],$$
    where $\Lambda$ is the adjoint operator of $L_\omega=\omega\wedge\cdot$.
    Assume that $\delta\geq0$ is a number such that $B+\delta
    \mathrm{I}$ is semi-positive definite everywhere on
    $\Lambda^{n,q}T^*_ X \otimes Q$ for some $q\geq 1$. Then given a form
    $g\in L^{n,q}_{(2)}( X , Q)_{h,\omega}$ such that $\bar\partial
    g=0$ and
    $$\int_ X \langle {(B+\delta
        \mathrm{I})}^{-1}g,g\rangle_{\omega,h} dV_{X,\omega}<+\infty,$$
    there exists an
    approximate solution $u\in L^{n,q-1}_{(2)}( X , Q)_{h,\omega}$
    and a correcting term $v\in L^{n,q}_{(2)}( X , Q)_{h,\omega}$
    such that $\bar\partial  u+\sqrt{\delta}v=g$ and
    $$\int_ X \frac{|u|^2_{\omega,h}}{\tau+A} dV_{X,\omega}+\int_X|v|^2_{\omega,h} dV_{X,\omega}\leq \int_ X
    \langle {(B+\delta \mathrm{I})}^{-1}g,g\rangle_{\omega,h} dV_{X,\omega}.$$
\end{lemma}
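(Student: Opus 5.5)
The plan is the standard twisted Bochner--Kodaira--Nakano argument with an auxiliary ``approximate solution'' term, following Demailly's treatment. We proceed first in the case where $X$ carries a \emph{complete} Kähler metric $\widehat\omega$. We then pass to the possibly incomplete $\omega$ by the usual device: consider $\omega_\varepsilon=\omega+\varepsilon\widehat\omega$, which is complete, and let $\varepsilon\to 0$.

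\textbf{Step 1 (the twisted identity).} For a smooth compactly supported $Q$-valued $(n,q)$-form $\alpha$ and smooth bounded $\tau,A>0$, we will establish the twisted Bochner--Kodaira--Nakano inequality
\[
\|\sqrt{\tau+A}\,\bar\partial^*\alpha\|^2+\|\sqrt\tau\,\bar\partial\alpha\|^2\geq \llangle B\alpha,\alpha\rrangle .
\]
It is obtained by expanding $\|\sqrt\tau\,\bar\partial^*\alpha\|^2+\|\sqrt\tau\,\bar\partial\alpha\|^2$ via the Kähler identities. This produces the curvature term $[\tau\image\Theta_{Q,h}-\ddbar\tau,\Lambda]$ together with a cross term $2\mathrm{Re}\llangle \bar\partial\tau\lrcorner\,\cdot\,\rrangle$. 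The cross term is absorbed by Cauchy--Schwarz, at the cost of $\|\sqrt A\,\bar\partial^*\alpha\|^2$ and the term $-[\image A^{-1}\partial\tau\wedge\bar\partial\tau,\Lambda]$. By completeness of $\widehat\omega$, compactly supported forms are dense in $\operatorname{Dom}\bar\partial\cap\operatorname{Dom}\bar\partial^*$ for the graph norm, so the inequality extends to that domain.

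\textbf{Step 2 (functional analysis with correcting term).} Adding $\delta\|\alpha\|^2$ to both sides gives
\[
\|\sqrt{\tau+A}\,\bar\partial^*\alpha\|^2+\|\sqrt\tau\,\bar\partial\alpha\|^2+\delta\|\alpha\|^2\geq\llangle(B+\delta\mathrm I)\alpha,\alpha\rrangle .
\]
Decompose $\alpha=\alpha_1+\alpha_2$ with $\alpha_1\in\operatorname{Ker}\bar\partial$ and $\alpha_2\perp\operatorname{Ker}\bar\partial$. Since $\bar\partial g=0$, Cauchy--Schwarz for the semipositive form $B+\delta\mathrm I$ yields
\[
|\llangle g,\alpha\rrangle|^2=|\llangle g,\alpha_1\rrangle|^2\leq C\bigl(\|\sqrt{\tau+A}\,\bar\partial^*\alpha_1\|^2+\delta\|\alpha_1\|^2\bigr),
\qquad C=\int\langle(B+\delta\mathrm I)^{-1}g,g\rangle .
\]
We then define a linear functional on the image of
\[
\alpha_1\mapsto\bigl(\sqrt{\tau+A}\,\bar\partial^*\alpha_1,\sqrt\delta\,\alpha_1\bigr)
\]
inside $L^2\oplus L^2$ by $\alpha_1\mapsto\llangle\alpha_1,g\rrangle$. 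The display above shows this functional is bounded with norm at most $\sqrt C$. Hahn--Banach and Riesz representation give a pair $(w,v)$ with $\|w\|^2+\|v\|^2\leq C$. Setting $u=\sqrt{\tau+A}\,w$, the resulting identity, valid for all $\alpha$ (the $\alpha_2$ part vanishes on both sides), says precisely $\bar\partial u+\sqrt\delta\,v=g$ weakly. The norm bound is then the desired estimate $\int|u|^2/(\tau+A)+\int|v|^2\leq C$.

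\textbf{Step 3 (non-complete metric).} Run Steps 1--2 with $\omega_\varepsilon$ and use monotonicity in the metric. For $(n,q)$-forms, $\langle(B_\varepsilon+\delta)^{-1}g,g\rangle dV_{\omega_\varepsilon}$ is nonincreasing as $\varepsilon$ decreases, and so is bounded by the $\omega$-quantity; likewise $|u|^2dV$ is monotone for $(n,q-1)$-forms. The resulting solutions $u_\varepsilon,v_\varepsilon$ are then uniformly bounded in $L^2$ with respect to $\omega$ on compact subsets. Extract weak limits as $\varepsilon\to0$ to obtain $u,v$ solving $\bar\partial u+\sqrt\delta\,v=g$ with the stated estimate.

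I expect the main obstacle to be Step 3. Specifically, one must check that $B_\varepsilon+\delta\mathrm I$ remains semipositive for $\omega_\varepsilon$ and that the integrand is monotone. Both rely on the $(n,\cdot)$-bidegree and on Demailly's lemma comparing $\langle[\,\cdot\,,\Lambda_\omega]^{-1}g,g\rangle dV_\omega$ across metrics. The $\delta$-shift and the $\partial\tau\wedge\bar\partial\tau$ term must be handled pointwise in the eigenvalue computation.
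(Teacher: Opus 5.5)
Your Steps 1 and 2 are the standard argument behind the cited \cite[Proposition 3.12]{Dem16}; the paper itself gives no proof. Step 3, however, has a genuine gap.

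The hypothesis $B+\delta\mathrm{I}\geq0$ is an inequality for an operator on $\Lambda^{n,q}T^*_X\otimes Q$ computed with $\omega$. For $q\geq2$ it is not inherited by $\omega_\varepsilon=\omega+\varepsilon\widehat\omega$. Moreover, Demailly's comparison lemma for $\langle[\Theta,\Lambda_\omega]^{-1}g,g\rangle\,dV_\omega$ is proved under the assumption that the curvature form $\Theta$ itself is (Nakano) semi-positive, and the lemma does not assume this.

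Here is a concrete failure. On the unit ball of $\mathbb{C}^2$ take $Q$ trivial, $h=e^{-\phi}$ with $\phi=-|z_1|^2+2|z_2|^2$, $\tau=A=1$, $q=2$, $\delta=0$, and $\omega$ Euclidean. On $(2,2)$-forms, $B$ is multiplication by $\operatorname{tr}_\omega\ddbar\phi=1$. Take the complete metric $\widehat\omega=\ddbar(-\log(1-|z|^2))$ and set $s=1-r^2$. At $(0,r)$, $\omega_\varepsilon$ has eigenvalues $1+\varepsilon/s$ and $1+\varepsilon/s^2$. Hence $\operatorname{tr}_{\omega_\varepsilon}\ddbar\phi=-\frac{s}{s+\varepsilon}+\frac{2s^2}{s^2+\varepsilon}<0$ for $s$ small, whatever $\varepsilon>0$.

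So $B_\varepsilon+\delta\mathrm{I}$ is not semi-positive, and Steps 1--2 cannot be run on $(X,\omega_\varepsilon)$. The claim you flag as ``to be checked'' is false, not merely unverified. The conclusion of the lemma does hold in this example, e.g.\ by the Morrey--Kohn--H\"ormander estimate for top-degree forms, so this is a gap in the argument rather than a counterexample to the statement. Also, what you need is that the $\omega_\varepsilon$-integrand is bounded \emph{above} by the $\omega$-integrand, i.e.\ it increases as $\varepsilon\downarrow0$; your wording has the direction reversed.

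Your argument does prove the lemma under the extra assumption that
\[
\Theta_\delta:=\tau\,\image\Theta_{Q,h}-\ddbar\tau-\image A^{-1}\partial\tau\wedge\bar\partial\tau+\tfrac{\delta}{q}\,\omega\otimes\Id_Q
\]
is Nakano semi-positive. This is automatic when $q=1$: on $(n,1)$-forms $B+\delta\mathrm{I}=[\Theta_\delta,\Lambda_\omega]$, and its semi-positivity is exactly Nakano semi-positivity of $\Theta_\delta$. It also holds in both places where the paper applies the lemma. There $\tau\equiv1$, $A$ is constant, and $\Theta_\delta$ is the curvature of the approximating metric plus a multiple of $\omega_X$, which is bounded below by the pull-back of a semi-positive form.

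In that situation you must also handle $\delta\mathrm{I}$ correctly. It is metric-independent, and on $(n,q)$-forms it equals $[\tfrac{\delta}{q}\omega_\varepsilon,\Lambda_{\omega_\varepsilon}]$. Therefore $B_\varepsilon+\delta\mathrm{I}=[\Theta_\delta+\tfrac{\delta\varepsilon}{q}\widehat\omega,\Lambda_{\omega_\varepsilon}]\geq[\Theta_\delta,\Lambda_{\omega_\varepsilon}]\geq0$. Demailly's lemma applied to $\Theta_\delta$ then gives $\langle(B_\varepsilon+\delta\mathrm{I})^{-1}g,g\rangle_{\omega_\varepsilon}dV_{\omega_\varepsilon}\leq\langle(B+\delta\mathrm{I})^{-1}g,g\rangle_{\omega}dV_{\omega}$, and after that your limiting argument goes through. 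For $q\geq2$ without such positivity, the reduction to a complete metric needs a different idea.

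A minor point in Step 2: the claim that ``the $\alpha_2$ part vanishes on both sides'' requires $\llangle\alpha_2,v\rrangle=0$. This holds if you take the Riesz representative in the closure of the image, since its second component then lies in $\ker\bar\partial$. Alternatively, use $\bar\partial^*\alpha_1=\bar\partial^*\alpha$ and $\|\alpha_1\|\leq\|\alpha\|$ to get the bound directly for all $\alpha\in\operatorname{Dom}\bar\partial^*$.
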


In the present paper, we use this lemma in the case that $\tau$ and $A$ are constants. For a more general version of this lemma, we refer to \cite{ZZ18} \cite[Lemma 9.10]{GMY23}.

\begin{lemma}[{\cite[Theorem 1.5]{Dem82}}] \label{lemma:complete metric}
    Let $X$ be a K\"{a}hler
    manifold and $Z$ an analytic subset of $X$. Assume that $\Omega$
    is a relatively compact open subset of $X$ possessing a complete
    K\"{a}hler metric. Then $\Omega\setminus Z$ carries a complete
    K\"{a}hler metric.
\end{lemma}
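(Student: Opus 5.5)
The plan is to modify the given complete Kähler metric $\omega_\Omega$ on $\Omega$ by adding $\ddbar$ of a function that blows up logarithmically-logarithmically along $Z$. This produces a Kähler metric on $\Omega\setminus Z$ that is still complete near $\partial\Omega$ and becomes complete near $Z$. Let $\omega$ be a fixed Kähler metric on $X$. Since $\Omega$ is relatively compact, $\omega$ is bounded on $\Omega$. This boundedness is what lets us absorb error terms.

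\textbf{Step 1: a quasi-psh function singular exactly along $Z$.} Cover $\bar\Omega$ by finitely many coordinate balls $U_j$ on which $Z\cap U_j=\{g_{j1}=\dots=g_{jN_j}=0\}$ for holomorphic $g_{jk}$. Choose cut-offs $\theta_j\in C^\infty_c(U_j)$ with $\sum_j\theta_j^2>0$ on a neighbourhood of $\bar\Omega$. Set
$$\psi=\log\Big(\sum_j\theta_j^2\sum_k|g_{jk}|^2\Big).$$
Near any point, all the terms $\sum_k|g_{jk}|^2$ with $\theta_j\neq0$ are mutually comparable, since they generate the same radical ideal after possibly raising powers. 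More simply, the standard computation of $\ddbar$ of the logarithm of a sum of squared moduli shows that $\psi$ is smooth on $\bar\Omega\setminus Z$, that $\psi\to-\infty$ exactly at $Z$, and that
$$\ddbar\psi\geq -C\omega \quad\text{on }\Omega\setminus Z.$$
The lower bound comes from the cross terms involving $d\theta_j$, which are bounded by Cauchy--Schwarz. Subtracting a constant, we may assume $\psi\leq -1$ on $\Omega$. Establishing this uniform lower bound for the Hessian is the step I expect to need the most care.

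\textbf{Step 2: the new metric.} Put $\chi=-\log(-\psi)$, which is smooth on $\Omega\setminus Z$ and tends to $+\infty$ at $Z$. A direct computation gives
$$\ddbar\chi=\frac{\ddbar\psi}{-\psi}+\image\,\partial\chi\wedge\bar\partial\chi\;\geq\; -C\omega+\image\,\partial\chi\wedge\bar\partial\chi,$$
using $|\psi|\geq1$. Define
$$\tilde\omega=\omega_\Omega+C\omega+\ddbar\chi.$$
This form is closed, and $\tilde\omega\geq\omega_\Omega+\image\,\partial\chi\wedge\bar\partial\chi$, so it is a Kähler metric on $\Omega\setminus Z$.

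\textbf{Step 3: completeness.} Since $\tilde\omega\geq\omega_\Omega$, every $\tilde\omega$-bounded set is $\omega_\Omega$-bounded. By completeness of $\omega_\Omega$, such a set is relatively compact in $\Omega$. Since $\tilde\omega\geq\image\,\partial\chi\wedge\bar\partial\chi$, we have $|d\chi|_{\tilde\omega}\leq 1$ (up to a constant factor). Hence $\chi$ is Lipschitz for the $\tilde\omega$-distance, so it is bounded on $\tilde\omega$-bounded sets. Such sets therefore stay a positive distance away from $Z$. It follows that closed $\tilde\omega$-bounded subsets of $\Omega\setminus Z$ are compact, and by Hopf--Rinow $\tilde\omega$ is complete.
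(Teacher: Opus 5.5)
Your strategy is the standard argument behind the result of Demailly that the paper simply cites: a function $\psi$ with logarithmic poles along $Z$ and $\ddbar\psi\geq -C\omega$, then $\tilde\omega=\omega_\Omega+C\omega+\ddbar\chi$ with $\chi=-\log(-\psi)$, then Hopf--Rinow. Your Steps 2 and 3 are correct. One harmless slip: $\chi\to-\infty$, not $+\infty$, at $Z$; you only use $|\chi|\to\infty$ and $|d\chi|_{\tilde\omega}\lesssim 1$.

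The gap is in Step 1, where you expected trouble. If the $g_{jk}$ are only set-theoretic equations of $Z$, then $|g_j|^2=\sum_k|g_{jk}|^2$ on overlapping charts are \emph{not} comparable. ``The same radical after raising powers'' only gives $|g_j|^a\lesssim|g_i|$, and then $\ddbar\psi\geq -C\omega$ is false. For example, take $Z=\{z_1=0\}$ with $g_1=z_1$ on $U_1$ and $g_2=z_1^2$ on $U_2$. Near a point of $Z\cap\Omega$ on the edge of $\operatorname{supp}\theta_1$, let $\theta_2\equiv1$ and $\theta_1=e^{-1/x}$ with $x=\operatorname{Re}z_2>0$. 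Then $\psi=\log|z_1|^2+\log(e^{-2/x}+|z_1|^2)$. Wherever $0<|z_1|^2\leq x^2e^{-2/x}$, the $\image dz_2\wedge d\bar{z}_2$-coefficient of $\ddbar\psi$ is at most $x^{-2}-\tfrac12x^{-3}\to-\infty$. Relatedly, set $\psi_j=\log|g_j|^2$ and $F=\sum_j\theta_j^2e^{\psi_j}$. The cross terms $\theta_je^{\psi_j}\,\image\partial\theta_j\wedge\bar\partial\psi_j/F$ have size up to $|\partial g_j|/|g_j|$ near $Z$. So they are not bounded by Cauchy--Schwarz on their own; they must be absorbed into a positive term.

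To repair Step 1, take $g_{j1},\dots,g_{jN_j}$ to be generators of the coherent ideal sheaf $\mathcal{I}_Z$ on $U_j$ (or of any fixed coherent ideal with zero set $Z$), with $\operatorname{supp}\theta_j\Subset U_j$. Then $|g_j|\leq C|g_i|$ on the compact sets $\operatorname{supp}\theta_i\cap\operatorname{supp}\theta_j$. Since at each point of $\bar\Omega$ some $\theta_i^2$ is $\geq c>0$, this gives $e^{\psi_j}\leq CF$ on $\operatorname{supp}\theta_j$. With $f_j=\theta_j^2e^{\psi_j}$ one computes
\[
\ddbar f_j=\frac{\image\partial f_j\wedge\bar\partial f_j}{f_j}+e^{\psi_j}\bigl(2\theta_j\ddbar\theta_j-2\image\partial\theta_j\wedge\bar\partial\theta_j+\theta_j^2\ddbar\psi_j\bigr).
\]
The first term equals $e^{\psi_j}\,\image(2\partial\theta_j+\theta_j\partial\psi_j)\wedge(2\bar\partial\theta_j+\theta_j\bar\partial\psi_j)$, so it makes sense where $\theta_j=0$. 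By Cauchy--Schwarz, $\sum_j\image\partial f_j\wedge\bar\partial f_j/f_j\geq\image\partial F\wedge\bar\partial F/F$. Using also $\ddbar\psi_j\geq0$, you get
\[
\ddbar\log F\geq -\frac{C}{F}\sum_j\mathbf{1}_{\operatorname{supp}\theta_j}e^{\psi_j}\,\omega\geq -C'\omega \quad\text{on }\Omega\setminus Z.
\]
With this version of Step 1, the rest of your proof goes through unchanged.
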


\begin{lemma}[{\cite[Lemma 6.9]{Dem82}}] \label{lemma:extension}
    Let $\Omega$ be an open subset of
    $\mathbb{C}^n$ and $Z$ be a complex analytic subset of $\Omega$.
    Assume that $u$ is a $(p,q-1)$-form with $L^2_{\loc}$
    coefficients and $g$ is a $(p,q)$-form with $L^1_{\loc}$
    coefficients such that $\bar\partial u=g$ on $\Omega\setminus Z$ (in the sense of currents). Then $\bar\partial u=g$ on
    $\Omega$.
\end{lemma}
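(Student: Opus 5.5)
The plan is to show the identity $\bar\partial u=g$ on $\Omega$ by testing against forms with compact support, and to pass across $Z$ with cutoff functions that vanish near $Z$ and have small $L^2$-gradient. We assume, as is implicit in the statement, that $Z$ is nowhere dense, i.e.\ it contains no component of $\Omega$.

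The question is local, so after a partition of unity it suffices to verify
$$\int_\Omega u\wedge\bar\partial\phi=(-1)^{p+q}\int_\Omega g\wedge\phi$$
for test forms $\phi$ supported in a small ball $B\Subset\Omega$. On such a ball we may assume $Z\cap B\subset\{f=0\}$ for some holomorphic $f\not\equiv0$. Shrinking $B$ and rescaling $f$, we also arrange $|f|<e^{-1}$ on $B$.

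\textbf{Step 1: the weight.} Set $t:=-\log|f|^2>0$ and $w:=-\log t=-\log(-\log|f|^2)$.
\begin{itemize}
\item $w$ is plurisubharmonic. The function $x\mapsto-\log(-x)$ is convex and increasing on $x<0$, and $\log|f|^2$ is psh.
\item $w\in L^1_{\loc}$, since $\log|f|^2$ is.
\item Off $\{f=0\}$ we have $\ddbar t=0$, hence
$$\ddbar w=\frac{\image\,\partial t\wedge\bar\partial t}{t^2}.$$
\end{itemize}
Since $\ddbar w$ is a positive current of locally finite mass, we get
$$\int_{B'\setminus\{f=0\}}\frac{|\partial t|^2}{t^2}\,d\lambda<\infty\qquad\text{for } B'\Subset B.$$
This integrability is the key point. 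It is exactly where the codimension $\geq1$ and analytic structure of $Z$ enter, and I expect it to be the main thing to justify carefully.

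\textbf{Step 2: the cutoffs.} Fix $\rho\in C^\infty(\mathbb{R})$ with $0\leq\rho\leq1$, $\rho=0$ on $(-\infty,1]$ and $\rho=1$ on $[2,\infty)$. Put
$$\theta_k:=1-\rho\big(k^{-1}\log t\big).$$
Then $\theta_k$ is smooth, vanishes near $\{f=0\}$ (where $\log t\geq 2k$), and $\theta_k\to1$ pointwise off the null set $\{f=0\}$. Its derivative satisfies
$$|\bar\partial\theta_k|\leq \frac{C}{k}\cdot\frac{|\partial t|}{t},$$
so by Step 1, $\|\bar\partial\theta_k\|_{L^2(B')}\to0$. (In fact the support of $\bar\partial\theta_k$ also shrinks to $\{f=0\}$, which gives this a second way.)

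\textbf{Step 3: conclusion.} The form $\theta_k\phi$ is a test form supported in $B\setminus Z$. Applying the hypothesis there gives
$$\int u\wedge\bar\partial(\theta_k\phi)=(-1)^{p+q}\int g\wedge\theta_k\phi.$$
Expanding, the left side is
$$\int\theta_k\,u\wedge\bar\partial\phi+\int u\wedge\bar\partial\theta_k\wedge\phi.$$
The second term is bounded by $\|u\|_{L^2(\operatorname{supp}\phi)}\,\|\bar\partial\theta_k\|_{L^2}\,\sup|\phi|$, which tends to $0$. This is why $u$ must be $L^2_{\loc}$ and not merely $L^1_{\loc}$. The remaining terms converge to $\int u\wedge\bar\partial\phi$ and $(-1)^{p+q}\int g\wedge\phi$ by dominated convergence, using $u,g\in L^1_{\loc}$ and $\theta_k\to1$ almost everywhere. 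Letting $k\to\infty$ yields $\bar\partial u=g$ on $B$, hence on $\Omega$.
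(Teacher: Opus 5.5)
Your proof is correct. It takes a genuinely different route from the one behind the citation; the paper gives no proof and only quotes \cite[Lemma 6.9]{Dem82}.

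\textbf{Demailly's route.} He argues by induction on $\dim Z$. Since $Z_{\mathrm{sing}}$ has smaller dimension, it suffices to work near a regular point of $Z$, where a local biholomorphism puts $Z$ inside $\{z_1=0\}$. He then tests against $\lambda(|z_1|/\varepsilon)\phi$. Here $|\bar\partial\lambda(|z_1|/\varepsilon)|\leq C/\varepsilon$ on a set of volume $O(\varepsilon^2)$, so the cutoff gradients are only bounded in $L^2$. The error term $\int u\wedge\bar\partial\lambda_\varepsilon\wedge\phi$ tends to zero only because $\int_{\{|z_1|\leq\varepsilon\}\cap\operatorname{supp}\phi}|u|^2\to 0$.

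\textbf{Your route.} You put $Z$ inside a single hypersurface $\{f=0\}$ and use $\log\log$ cutoffs. Positivity of $\ddbar(-\log(-\log|f|^2))$ then gives $\|\bar\partial\theta_k\|_{L^2}=O(1/k)\to 0$ outright. Your justification of the key integrability in Step 1 is sound: you restrict the trace measure of a positive current to the open set where $w$ is smooth.

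\textbf{What each buys.} Your argument needs no stratification of $Z$, no induction and no flattening. It also works verbatim for any closed set locally of the form $\{\psi=-\infty\}$ with $\psi<0$ psh, $\psi\not\equiv-\infty$, and $\psi$ smooth off that set. Indeed, $\ddbar(-\log(-\psi))\geq \image\,\partial\psi\wedge\bar\partial\psi/\psi^2$ there, and $\{\psi<-c\}$ is an open neighbourhood of the set by upper semicontinuity. The price is a little pluripotential theory: composing psh functions with convex increasing ones, and measure coefficients of positive currents. Demailly's argument is entirely elementary, using only a tube-volume estimate and the local structure of analytic sets. The same estimate also handles any closed real submanifold of real codimension at least $2$, where no pluripolarity is available.

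Your added hypothesis that $Z$ is nowhere dense is necessary: the statement fails if $Z$ contains a component of $\Omega$. It is exactly what gives $f\not\equiv 0$.
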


By Lemmas~\ref{lemma:solve equation}, Lemma~\ref{lemma:complete metric} and Lemma~\ref{lemma:extension},
it suffices to solve the $\bar\partial$ equation on the complement of an analytic subset with respect to a not necessarily complete K\"ahler metric, when the goal is to solve it on a relatively compact open subset.

\subsection{Regularization and Numerical dimension}

We first recall that for a nef class $\alpha\in H^{1,1}(X,\mathbb{R})$ on a compact K\"ahler manifold $X$, 
the numerical dimension $\nd(\alpha)$ is defined to be $\nd(\alpha)=\max\{k\in\mathbb{N}~|~\alpha^k\neq 0\}$.
For a nef line bundle $L$ on $X$, $\nd(L)$ is defined by $\nd(L)=\nd(c_1(L))$.

One can also define the numerical dimension of some representative of a pseudo-effective class, more precisely, the numerical dimension of closed positive $(1,1)$-currents.

Let us recall the equisingular approximation theorem due to Demailly, Peternell, and Schneider before we give the precise definition of the numerical dimension.

\begin{theorem}[{\cite[Theorem 2.3, Remark 2.5]{DPS01}}] \label{thm:equisingular}
    Let $T=\alpha+\ddbar\varphi$ be a closed $(1,1)$-current on a compact Hermitian manifold $(X,\omega)$, where $\alpha$ is a smooth closed $(1,1)$-form and $\varphi$ is a quasi-plurisubharmonic function.
    Let $\gamma$ be a continuous real $(1,1)$-form such that $T\geq \gamma$. Then one can write $\varphi=\lim_{k\to\infty}\varphi_k$ where

    \begin{itemize}
        \item[(1)] $\varphi_k$ is smooth in the complement $X\setminus Z_k$ of an analytic set $Z_k\subset X$;
        \item[(2)] $\{\varphi_k\}$ is a decreasing sequence, and $Z_k\subset Z_{k+1}$ for all $k$;
        \item[(3)] for every $t>0$, $\int_X e^{-t\varphi}-e^{-t\varphi_k}dV_\omega$ is finite for $k$ large enough and converges to $0$ as $k\to +\infty$;
        \item[(4)] $\mathcal{I}(\varphi_k)=\mathcal{I}(\varphi)$ for all $k$;
        \item[(5)] $T_k=\alpha+\ddbar\varphi_k$ satisfies $T_k\geq \gamma-\varepsilon_k\omega$, where $\lim_{k\to\infty}\varepsilon_k=0$.

    \end{itemize}
\end{theorem}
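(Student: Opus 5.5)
The statement is the equisingular approximation theorem of Demailly--Peternell--Schneider. My plan is the standard Bergman-kernel regularization combined with a diagonal extraction that enforces equisingularity.

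\textbf{Step 1: Bergman kernel approximation.} First I reduce to the local situation. Cover $X$ by finitely many coordinate balls. On each ball $B_j$, write $\alpha=\ddbar g_j$ with $g_j$ smooth, so that $T=\ddbar(g_j+\varphi)$ locally. Fix $m\geq 1$. On $B_j$ I take an orthonormal basis $(\sigma_{j,\ell})$ of the Hilbert space of holomorphic functions $f$ with $\int_{B_j}|f|^2e^{-2m(g_j+\varphi)}<\infty$, and set
\[
\psi_{m,j}=\frac{1}{2m}\log\sum_\ell|\sigma_{j,\ell}|^2-g_j .
\]
I then glue the $\psi_{m,j}$ into a global function $\psi_m$ with a partition-of-unity and max-type argument (regularized maximum). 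The gluing error coming from the overlaps is $O(\log m/m)$ in the potentials, and it costs only $\varepsilon_m\omega$ in the curvature bound.

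\textbf{Step 2: properties of $\psi_m$.} The Ohsawa--Takegoshi extension theorem and the mean value inequality give
\[
\varphi(x)-\frac{C}{m}\leq\psi_m(x)\leq\sup_{|z-x|<r}\varphi(z)+\frac{C}{m}+C r .
\]
Hence $\psi_m\to\varphi$ pointwise and in $L^1$. Each $\psi_m$ has analytic singularities along $Z_m=V(\mathcal{I}(2m\varphi))$ and is smooth off $Z_m$, which gives (1). The lower bound $T\geq\gamma$ transfers to $\alpha+\ddbar\psi_m\geq\gamma-\varepsilon_m\omega$. Since $\gamma$ is only continuous, this step requires the Lelong-number-free local comparison: approximate $\gamma$ by constant-coefficient forms on small balls and subtract quadratic potentials before forming the Bergman kernels. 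That yields (5).

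\textbf{Step 3: equisingularity.} This is the main obstacle. The functions $\psi_m$ are less singular than $\varphi$, but their multiplier ideals can be strictly larger than $\mathcal{I}(\varphi)$. My approach is to use the subadditivity and strong Noetherian properties of the ideals $\mathcal{I}(2m\varphi)$. Concretely, I replace $\psi_m$ by
\[
\varphi_k=\frac{1}{2^{k}}\log\sum_{m\leq 2^k}\cdots ,
\]
or, more simply, I take
\[
\varphi_k=\max\Bigl(\psi_{m_k},\ \varphi+\tfrac{1}{k}\log\textstyle\sum_\nu|f_\nu|^2\Bigr)
\]
for sufficiently steep weights. These are then regularized along a finite set of generators $f_\nu$ of $\mathcal{I}(\varphi)$, so that $e^{-\varphi_k}$ and $e^{-\varphi}$ have comparable integrability against holomorphic functions. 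This gives $\mathcal{I}(\varphi_k)=\mathcal{I}(\varphi)$, i.e.\ (4).

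To get (3), I estimate $\int e^{-t\varphi}-e^{-t\varphi_k}$ with the Hölder and Skoda inequalities, together with the strong openness theorem. Strong openness gives that $\int e^{-(1+\delta)t\varphi}<\infty$ on compact sets away from non-integrability whenever $\int e^{-t\varphi}<\infty$. The convergence then follows by dominated convergence.

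\textbf{Step 4: monotonicity.} Finally I extract a decreasing subsequence, which gives (2). I use the subadditivity estimate $\psi_{2m}\leq\psi_m+C/m$, replace $\varphi_k$ by $\varphi_k+\sum_{j\geq k}C2^{-j}$, and pass along $m=2^k$. With this indexing the $Z_k$ increase, because $\mathcal{I}(2^{k+1}\varphi)\subset\mathcal{I}(2^k\varphi)$.
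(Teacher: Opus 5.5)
\textbf{There is a genuine gap, at the step you yourself call the main obstacle.} The paper does not prove this statement; it quotes \cite{DPS01} and remarks only that the proof is a Bergman kernel argument, so your proposal has to stand on its own.

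Your Steps 1, 2 and 4 are the standard Bergman kernel regularization. Along $m=2^k$, with the subadditivity correction, they give (1), (2) and (5). On the multiplier ideal side they give only the weaker quasi-equisingular property $\mathcal{I}(m(1+\delta)\psi_{2^k})\subset\mathcal{I}(m\varphi)$ for $k\geq k_0(\delta,m)$, which the paper recalls right after the theorem. They do not give (3) or (4). Already locally, take $\varphi=6\log|z|$ on the unit ball of $\mathbb{C}^2$, with the paper's weight $e^{-\varphi}$. Your Hilbert space for the weight $e^{-2m\varphi}$ consists of the $f$ vanishing to order at least $6m-1$ at $0$. Hence $\psi_m=(6-\frac1m)\log|z|+O(1)$, and for every $m$ we get $\mathcal{I}(\psi_m)_0=\mathfrak{m}_0\neq\mathfrak{m}_0^2=\mathcal{I}(\varphi)_0$, while $e^{-\varphi}-e^{-\psi_m}\sim|z|^{-6}$ is not integrable near $0$. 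Worse, for $\varphi=c\log|z|$ on the disc with $c\geq 2$ irrational and $t=1$, no approximant $\varphi_k\geq\varphi$ with rational logarithmic poles satisfies (3) at all. This is why (1) only asks for smoothness off $Z_k$. The approximants must keep the non-integrable part of the singularity of $\varphi$ exactly, and that needs a construction beyond the Bergman approximants.

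Step 3 does not supply such a construction.
\begin{enumerate}
\item The first formula is never specified.
\item In the second formula, $\psi_{m_k}\geq\varphi-C/m_k$ while $\frac1k\log\sum_\nu|f_\nu|^2\to-\infty$ on $V(\mathcal{I}(\varphi))$. So on a neighbourhood of $V(\mathcal{I}(\varphi))$, which is exactly where integrability is at stake, your $\varphi_k$ coincides with $\psi_{m_k}$. In the example above this gives $\mathcal{I}(\varphi_k)_0=\mathfrak{m}_0$.
\item In general $\varphi_k\geq\psi_{m_k}$ forces $\mathcal{I}(\varphi_k)\supseteq\mathcal{I}(\psi_{m_k})$, so taking a max with $\psi_{m_k}$ can never repair the defect of the Bergman approximants.
\item Wherever the other branch is larger, $\varphi_k$ inherits the non-smoothness of $\varphi$, which violates (1).
\item The sentence about regularizing along generators so that integrabilities become comparable simply restates (4).
\end{enumerate}

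Your argument for (3) also targets the wrong region. Openness controls $e^{-(1+\delta)t\varphi}$ only where $e^{-t\varphi}$ is already integrable, and there (3) is trivial. Where $e^{-t\varphi}$ is not integrable, there is no dominating function until you know $\int_X(e^{-t\varphi}-e^{-t\varphi_{k_0}})\,dV_\omega<\infty$ for a single $k_0$. After that, monotone convergence suffices, since the integrand decreases in $k$.

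That finiteness is the whole content of the theorem. Indeed, (3) with $t=1$ already implies (4) through
\[
\int_U|f|^2e^{-\varphi}\leq\int_U|f|^2e^{-\varphi_k}+\sup_U|f|^2\int_U(e^{-\varphi}-e^{-\varphi_k}),
\]
and the proposal never establishes it. Finally, the monotonicity in Step 4 is proved for $\psi_{2^k}$, not for the functions produced in Step 3.
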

We remark that Theorem \ref{thm:equisingular} also holds for relatively compact subsets when $X$ is noncompact.
Theorem~\ref{thm:equisingular} was proved by a Bergman kernel method.

Let $T=\alpha+\ddbar\varphi$ be a closed positive current on a compact K\"ahler manifold $(X,\omega)$.
It follows from \cite[Theorem~2.3]{DPS01} that there exists an approximation $\{\varphi_k\}$ of $\varphi$ for $T$ with the following properties:
\begin{itemize}
    \item[(1)] the sequence $\{\varphi_k\}$ converges to $\varphi$ in $L^1$ topology and
    $$\alpha+\ddbar\varphi_k\geq -\tau_k\cdot\omega$$
    for some constants $\tau_k\to0$ as $k\to+\infty$,
    \item[(2)] all the $\varphi_k$ have analytic singularities, and $\varphi_k$ is less singular than $\varphi_{k+1}$, that is, $\varphi_{k+1}\leq\varphi_k+C$ for some constant $C$.
    \item[(3)] for any $\delta>0$ and $m\in\NN$, there exists $k_0(\delta,m)\in\NN$ such that
    $$\mathcal{I}(m(1+\delta)\varphi_k)\subset \mathcal{I}(m\varphi) \quad \text{for every } k\geq k_0(\delta,m).$$
\end{itemize}
This approximation $\{\varphi_k\}$ is called a quasi-equisingular approximation. Moreover, the following property can also be deduced from the proof:
\begin{itemize}
    \item[(4)] for every $t>0$, $\int_X e^{-t\varphi}-e^{-t\max\{\varphi,\varphi_k\}}dV_\omega$ is finite for $k$ large enough and converges to $0$ as $k\to +\infty$.
\end{itemize}

Although such approximations were constructed in \cite[Theorem~2.3]{DPS01} for the local case,
one can easily adapt that construction to a global situation on projective manifolds by using the same techniques (see \cite[Proposition~3.3]{Cao14}, \cite[Proposition~2.1]{GZ15-b}).

We now recall the definition of the numerical dimension.
\begin{definition}[{\cite[Definition 2.5, Definition 3.1]{Cao14}}]
    Let $T_i = \alpha_i+\ddbar\varphi_i$, $i=1,\cdots,k$ be closed positive $(1,1)$-currents on a compact K\"ahler manifold $X$ of dimension $n$.
    Then one can define a cohomological product
    $$\langle T_1,\cdots,T_k \rangle \in H^{k,k}_{\geq 0}(X)$$
    such that for all $u\in H^{n-k,n-k}(X)$,
    $$\langle T_1,\cdots,T_k \rangle \wedge u = \lim_{j\to \infty}\int_X (\alpha_1+\ddbar\varphi_{1,j})_\textup{ac}
    \wedge\cdots\wedge(\alpha_k+\ddbar\varphi_{k,j})_\textup{ac}\wedge u,$$
    where $\{\varphi_{i,j}\}_{j=1}^\infty$ is a quasi-equisingular approximation of $\varphi_i$ and $(\alpha_i+\ddbar\varphi_{i,j})_\textup{ac}$ is the
    absolutely continuous part of the Lebesgue decomposition of the current $\alpha_i+\ddbar\varphi_{i,j}$.
    The above limit exists and does not depend on the choice of the quasi-equisingular approximation.
    The numerical dimension $\nd(T)$ is defined to be the largest $l\in\NN$ such that $\langle T^l\rangle\neq 0$.

\end{definition}

For reader's convenience, we state the strong openness property of multiplier ideal sheaves proved by Guan and Zhou here.

\begin{theorem}[{\cite[Theorem 1.1]{GZ15-a}}] \label{thm:strong openness}
    Let $\varphi$ be a negative plurisubharmonic function on $\Delta^n\subset \mathbb{C}^n$, and let $\varphi_0\not\equiv-\infty$ be a negative plurisubharmonic function on $\Delta^n$.
    Then $\mathcal{I}(\varphi)=\bigcup_{\varepsilon>0}\mathcal{I}(\varphi+\varepsilon\varphi_0)$.
    In particular, $\mathcal{I}(\varphi)=\mathcal{I}_+(\varphi)$.
\end{theorem}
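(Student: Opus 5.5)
The inclusion $\mathcal{I}(\varphi+\varepsilon\varphi_0)\subset\mathcal{I}(\varphi)$ is immediate: $\varphi_0<0$ gives $e^{-\varphi-\varepsilon\varphi_0}\geq e^{-\varphi}$. For the reverse inclusion I would first reduce to the special case $\mathcal{I}(\varphi)=\mathcal{I}_+(\varphi)$.

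\emph{Reduction.} Let $f\in\mathcal{I}(\varphi)_x$ and assume $f\in\mathcal{I}((1+\delta)\varphi)_x$ for some $\delta>0$. Put $p=1+\delta$ and let $q$ be the conjugate exponent. Write
$$|f|^2e^{-\varphi-\varepsilon\varphi_0}=\bigl(|f|^2e^{-p\varphi}\bigr)^{1/p}\bigl(|f|^2e^{-q\varepsilon\varphi_0}\bigr)^{1/q}.$$
By H\"older's inequality it then suffices that $e^{-q\varepsilon\varphi_0}$ be locally integrable near $x$. Skoda's integrability theorem guarantees this once $q\varepsilon\,\nu(\varphi_0,x)<2$, and $\varphi_0\not\equiv-\infty$ has finite Lelong number at $x$. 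So a small $\varepsilon$ works, and everything reduces to proving $\mathcal{I}(\varphi)\subset\mathcal{I}_+(\varphi)$.

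\emph{Main step.} Argue by contradiction. Suppose $f\in\mathcal{I}(\varphi)_o\setminus\mathcal{I}_+(\varphi)_o$. Since $\mathcal{I}_+(\varphi)$ is coherent (Noetherian stabilisation of the increasing family $\mathcal{I}((1+\varepsilon)\varphi)$), we may work on a small polydisc $D$ on which $\int_D|f|^2e^{-\varphi}<\infty$. For $t\geq0$ define the minimal $L^2$ norm
$$G(t)=\inf\Bigl\{\int_{\{\varphi<-t\}}|F|^2\,d\lambda:\ F\in\mathcal{O}(\{\varphi<-t\}),\ (F-f)_o\in\mathcal{I}_+(\varphi)_o\Bigr\}.$$
Closedness of the condition under locally uniform limits (again by coherence) gives that the infimum is attained, and $G(0)>0$ because $f\notin\mathcal{I}_+(\varphi)_o$.

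\emph{Key estimate.} The crux is to show that $r\mapsto G(-\log r)$ is concave on $(0,1]$. Since it tends to $0$ as $r\to0^+$, concavity yields $G(t)\geq G(0)e^{-t}$. To get concavity I would use an Ohsawa--Takegoshi type $L^2$ extension/$\bar\partial$ estimate with \emph{optimal constant}, applied on the sublevel sets $\{\varphi<-t\}$ with weights built from $\varphi$, in the spirit of Guan--Zhou's optimal $L^2$ extension. Concretely, take the minimiser $F_{t_1}$ on $\{\varphi<-t_1\}$, cut it off by a function of $\varphi$ supported in $\{-t_1>\varphi\}$, and correct by solving $\bar\partial$. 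This produces $\tilde F$ on $\{\varphi<-t_0\}$ with $(\tilde F-F_{t_1})_o\in\mathcal{I}_+(\varphi)_o$ and
$$\int_{\{\varphi<-t_0\}}|\tilde F|^2\leq G(t_1)+\frac{e^{-t_0}-e^{-t_1}}{e^{-t_1}-e^{-t_2}}\bigl(G(t_1)-G(t_2)\bigr).$$
This is exactly the three-point concavity inequality. I expect this step to be the main obstacle: both the sharp constant and the verification that the correction term lies in $\mathcal{I}_+(\varphi)_o$ must be checked. The latter requires the correcting $\bar\partial$-solution to be $L^2$ against $e^{-(1+\varepsilon)\varphi}$ near $o$.

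\emph{Conclusion.} Let $F_0$ be the minimiser on $D$. Since $F_0-f\in\mathcal{I}_+\subset\mathcal{I}$, we have $\int|F_0|^2e^{-\varphi}<\infty$ on a neighbourhood. On the other hand, by Fubini and minimality on each sublevel set,
$$\int_D|F_0|^2e^{-\varphi}\;\geq\;\int_0^\infty\Bigl(\int_{\{\varphi<-t\}}|F_0|^2\Bigr)e^{t}\,dt\;\geq\;\int_0^\infty G(t)e^t\,dt\;\geq\;\int_0^\infty G(0)\,dt=\infty.$$
This contradiction proves $\mathcal{I}(\varphi)=\mathcal{I}_+(\varphi)$, and by the reduction step it proves the full statement.
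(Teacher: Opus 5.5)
The paper does not prove this statement; it quotes it from \cite{GZ15-a}, where it is obtained via the Guan--Zhou $L^2$ extension theorem with optimal estimate. Your outline follows a different route instead: the later ``concavity of minimal $L^2$ integrals'' argument from Guan's sharp effectiveness result. Its outer layers are sound. The H\"older--Skoda reduction is correct. $G(0)>0$ follows from closedness of $\{F:F_o\in J\}$ for an ideal $J\subset\mathcal{O}_o$. Concavity together with $G(t)\to0$ does give $G(t)\geq e^{-t}G(0)$. You should say that $\varphi(o)=-\infty$ may be assumed; otherwise $\nu(\varphi,o)=0$ and all $\mathcal{I}(c\varphi)_o=\mathcal{O}_o$. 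In the conclusion there is a slip: you only know $|F_0|^2e^{-\varphi}$ is integrable near $o$, not on all of $D$. Use $f$ itself instead, which is admissible for every $G(t)$ and has $\int_D|f|^2e^{-\varphi}<\infty$ by your choice of $D$.

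The genuine gap is the key estimate. It carries the whole analytic content, you only assert it, and the mechanism you sketch does not deliver it. The optimal extension on $\{\varphi<-t_0\}$, with sublevel function $\varphi$ and trivial weight, controls $\tilde F-(1-b(\varphi))F_{t_1}$ against a weight comparable to $e^{-\varphi}$ near $o$. So the correction lies only in $\mathcal{I}(\varphi)_o$, an ideal that contains $f$; with that constraint $G\equiv0$. You note that $L^2$ control against $e^{-(1+\varepsilon)\varphi}$ is needed, but give no way to obtain it.

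The missing idea is an extra weight. For fixed $t_0<t_1$ and $B$, run the extension with the additional plurisubharmonic weight $\varepsilon\varphi$. The correction then lies in $\mathcal{I}((1+\varepsilon)\varphi)_o\subset\mathcal{I}_+(\varphi)_o$. The factor $e^{-\varepsilon\varphi}\geq1$ can be dropped on the left. On the annulus $\{-t_1-B<\varphi<-t_1\}$ it costs at most $e^{\varepsilon(t_1+B)}$, which tends to $1$ as $\varepsilon\to0$. Equivalently, prove concavity for $G_\delta(t)=\inf\int_{\{\varphi<-t\}}|F|^2e^{-\delta\varphi}$ under the same constraint, use $G_\delta(0)\geq G(0)$, and let $\delta\to0$ by dominated convergence against $|f|^2e^{-\varphi}$.

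A second missing ingredient is the orthogonality of the minimiser. $F_{t_1}$ is orthogonal to every $L^2$ holomorphic $h$ on $\{\varphi<-t_1\}$ with $h_o\in\mathcal{I}_+(\varphi)_o$. This gives $\|\tilde F\|^2_{\{\varphi<-t_1\}}=G(t_1)+\|\tilde F-F_{t_1}\|^2_{\{\varphi<-t_1\}}$. The triangle inequality alone loses constants and cannot yield concavity. The argument naturally produces only the infinitesimal form of your three-point inequality (as $B\to0$), and concavity then needs a separate real-variable lemma. With these pieces and the optimal-constant extension lemma, your outline becomes a proof.
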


It follows from the strong openness property and \cite[Lemma 5.9, Lemma 5.10]{Cao14} that a closed positive $(1,1)$-current admits a good regularization.

\begin{lemma} \label{lemma:regularization1}
    Let $(Y,\omega_Y)$ be a compact K\"ahler manifold of dimension $m$.
    Let $T=\alpha+\ddbar\varphi$ be a closed positive $(1,1)$-current on $Y$, and let $p\geq m-\nd(T)+1$.
    Then there exists a sequence of quasi-plurisubharmonic functions $\{\varphi_k\}_{k=1}^\infty$ with analytic singularities which satisfy the following properties:
    \begin{itemize}
        \item[(1)] $\mathcal{I}(\varphi_k)=\mathcal{I}(\varphi)$ and $\varphi_k\leq 0$ on $Y$ for all $k\in\mathbb{N}$.

        \item[(2)] Let $\lambda_{1,k}\leq\cdots\leq\lambda_{m,k}$ be the eigenvalues of $\alpha+\ddbar\varphi_k$ with respect to $\omega_Y$.
        Then there exist two sequences $\tau_k\to 0$ and $\varepsilon_k\to 0$ such that
        $$\varepsilon_k\gg \tau_k+\frac{1}{k} \quad \text{and} \quad \lambda_{1,k}(y)\geq -\varepsilon_k-\frac{C}{k}-\tau_k$$
        for all $y\in Y$ and $k$, where $C$ is a constant independent of $k$.

        \item[(3)] We can choose $\beta>0$ and $0<\gamma<1$ independent of $k$ such that for every $k$, there exists
        an open subset $U_k$ of $Y$ satisfying
        $$\Vol(U_k)\leq\varepsilon_k^\beta \quad \text{and} \quad \lambda_{p,k}+2\varepsilon_k\geq\varepsilon_k^\gamma \quad \text{on } Y\setminus U_k.$$

        \item[(4)] There exists a constant $s_1>0$ such that for any local section $f$ of $\mathcal{I}(\varphi)$ over an open subset $V\subset Y$,
        any relatively compact open subset $U\Subset V$ and every $k$, we have
        $$\int_U |f|^2e^{-\varphi_k}dV_{\omega_Y}\leq C_{\|f\|_{L^\infty}}\cdot (\int_U |f|^2e^{-(1+s_1)\varphi}dV_{\omega_Y})^{1/(1+s_1)}<+\infty,$$
        where $C_{\|f\|_{L^\infty}}$ is a constant depending only on $\|f\|_{L^\infty}$.
    \end{itemize}
\end{lemma}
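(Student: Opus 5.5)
The plan is to take Cao's construction from \cite[Lemma 5.9, Lemma 5.10]{Cao14} and modify only the singularity part, using the strong openness property to upgrade ``$\mathcal{I}_+$'' statements to ``$\mathcal{I}$'' statements.

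\textbf{Step 1 (approximation).} Start with a quasi-equisingular approximation $\{\psi_k\}$ of $\varphi$ as recalled after Theorem~\ref{thm:equisingular}. It has analytic singularities, satisfies $\alpha+\ddbar\psi_k\geq-\tau_k\omega_Y$, and has property (3): $\mathcal{I}((1+\delta)\psi_k)\subset\mathcal{I}(\varphi)$ for $k\geq k_0(\delta)$. By Theorem~\ref{thm:strong openness}, $\mathcal{I}(\varphi)=\mathcal{I}((1+s)\varphi)$ for some $s>0$; this holds locally, hence globally by compactness and coherence.

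Apply property (3) to $(1+s)\varphi$ in place of $\varphi$, i.e.\ use a quasi-equisingular approximation of $(1+s)\varphi$ rescaled by $1/(1+s)$, with $\delta$ small. After passing to a subsequence and subtracting a constant (so that $\varphi_k\leq0$), this gives $\mathcal{I}(\varphi_k)\subset\mathcal{I}(\varphi)$. The reverse inclusion $\mathcal{I}(\varphi)\subset\mathcal{I}(\varphi_k)$ comes from $\varphi\leq\varphi_k+C$, which holds because the approximation is less singular than $\varphi$. This gives (1).

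\textbf{Step 2 (Monge--Amp\`ere modification).} Following \cite[Lemma 5.9]{Cao14}, resolve the singularities of $\varphi_k$ by a log resolution. Then solve a Calabi--Yau type Monge--Amp\`ere equation
\[
(\alpha+\varepsilon_k\omega_Y+\ddbar\phi_k)^m = c_k\,\varepsilon_k^{m-\nd(T)}\,\omega_Y^m,
\]
posed relative to the current $(\alpha+\ddbar\varphi_k)_{\mathrm{ac}}$ on the resolution, and replace $\varphi_k$ by this solution, which keeps the same analytic singularities. Choose $\varepsilon_k\to0$ with $\varepsilon_k\gg\tau_k+1/k$. The lower bound $\lambda_{1,k}\geq-\varepsilon_k-C/k-\tau_k$ then follows from positivity of the solution together with the error term of the resolution metric. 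This gives (2).

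\textbf{Step 3 (mass concentration).} Property (3) is Cao's \cite[Lemma 5.10]{Cao14}. Since $\langle T^{\nd(T)}\rangle\neq0$, the product $\lambda_{1,k}\cdots\lambda_{m,k}$ is bounded below by roughly $\varepsilon_k^{m-\nd(T)}$. On the other hand, an $L^1$ bound on $\sum_i\lambda_{i,k}$, coming from the cohomology class, shows by a Chebyshev argument that the large eigenvalues cannot all be small outside a set $U_k$ of small volume. Combining the two forces $\lambda_{p,k}+2\varepsilon_k\geq\varepsilon_k^\gamma$ for $p\geq m-\nd(T)+1$ outside $U_k$, with $\mathrm{Vol}(U_k)\leq\varepsilon_k^\beta$. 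The exponents $\beta$ and $\gamma$ are independent of $k$.

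\textbf{Step 4 (uniform integrability).} For (4), apply H\"older's inequality with exponents $1+s_1$ and $(1+s_1)/s_1$ to $|f|^2e^{-\varphi_k}$, together with the comparison $\varphi_k\geq\varphi-C$. This gives
\[
\int_U|f|^2e^{-\varphi_k}\leq\Bigl(\int_U|f|^2e^{-(1+s_1)\varphi}\Bigr)^{1/(1+s_1)}\Bigl(\int_U|f|^2\Bigr)^{s_1/(1+s_1)}e^{C}.
\]
The right-hand integral is finite because, by strong openness, $f\in\mathcal{I}((1+s_1)\varphi)$ for small $s_1$. However, the constant $C$ in $\varphi\leq\varphi_k+C$ need not be uniform in $k$. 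To get uniformity, normalize $\varphi\leq\varphi_k$ directly: the equisingular construction produces $\varphi_k$ decreasing to $\varphi$, so $\varphi_k\geq\varphi$. In the Monge--Amp\`ere step, arrange that the modified potential still dominates $\varphi$, for instance by taking a maximum with $\varphi_k$ in the comparison.

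\textbf{Main obstacle.} I expect this to be the hardest part: keeping (1), (4) and the uniform upper bound through the Monge--Amp\`ere modification of Step 2 while retaining the eigenvalue estimates (2) and (3).
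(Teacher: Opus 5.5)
There is a genuine gap, and it is where you suspected: Steps 2 and 4 are incompatible as you set them up.

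\textbf{The main gap.} You replace $\varphi_k$ wholesale by the Monge--Amp\`ere potential $\Psi_k:=\widetilde\varphi_k\circ\pi_k+\psi_{k,\varepsilon_k,\delta_k}+\delta_k\log|E_k|_{h_k}$. With that choice you cannot get (4) with a constant independent of $k$. That independence is exactly what Step~3 of the proof of Theorem~\ref{MainThm1'} uses, via Lemma~\ref{lemma:regularization2}, to show $\varepsilon_kC_k\to0$.

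The potential $\Psi_k$ is only normalized by $\sup=0$. Its smooth part $\psi_{k,\varepsilon_k,\delta_k}$ has in general no $k$-uniform $C^0$ bound as $\varepsilon_k,\delta_k\to0$. The term $\delta_k\log|E_k|_{h_k}$ makes $\Psi_k$ more singular than $\widetilde\varphi_k$, so it does not keep the same singularities. Hence there is no uniform comparison $\Psi_k\geq\varphi-C$. The only uniform information is Skoda's uniform integrability, $\int_Ye^{-a\Psi_k}\leq C$ for some small $a>0$. That is far weaker than what you need to control $\int_U|f|^2e^{-\Psi_k}$ for $f\in\mathcal{I}(\varphi)$.

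Taking a maximum does not repair this. Wherever the other branch $\widetilde\varphi_k-A$ is active, you lose the Monge--Amp\`ere equation and with it the lower bound on $\lambda_{p,k}$ in (3), so that set must lie inside $U_k$. The only control on its volume is Skoda's bound $Ce^{-aA}$. This forces $A=A_k\sim\frac{\beta}{a}\log(1/\varepsilon_k)\to\infty$, and then the comparison constant in (4) is $e^{A_k}$, which is the very non-uniformity you wanted to remove. Note also that if a uniform bound $\varphi_k\geq\varphi-C$ were available, your H\"older step would be superfluous. The exponent $1/(1+s_1)$ in (4) signals that one factor is not pointwise comparable to $\varphi$.

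\textbf{The missing idea.} Put only a fixed small weight on the Monge--Amp\`ere potential:
\[
\varphi_k=\Bigl(1+\frac{2}{k}-s\Bigr)\widetilde\varphi_k\circ\pi_k+s\,\Psi_k,
\]
pushed down to $Y$. Here $\widetilde\varphi_k$ is a quasi-equisingular approximation, less singular than $\varphi$, with $\mathcal{I}((1+\frac{2}{k})\widetilde\varphi_k)=\mathcal{I}(\varphi)$. The properties then follow:
\begin{itemize}
\item[(4)] H\"older with exponents $1+s_1$ and $\frac{1+s_1}{s_1}$ splits $\int_U|f|^2e^{-\varphi_k}$ into two factors. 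The first involves only $\widetilde\varphi_k$ and is controlled uniformly, because $\widetilde\varphi_k$ is less singular than $\varphi$ and $\mathcal{I}(\varphi)=\mathcal{I}((1+s_1)\varphi)$. The second is $\int_U|f|^2e^{-\frac{s(1+s_1)}{s_1}\Psi_k}\leq C\|f\|_{L^\infty}^2$, uniformly bounded by Skoda once $s(1+s_1)/s_1\leq a$.
\item[(1)] One inclusion holds because $\varphi_k$ is more singular than $(1+\frac{2}{k})\widetilde\varphi_k$; the other follows from (4).
\item[(2), (3)] These survive because $\alpha+\ddbar\varphi_k=(1+\frac{2}{k}-s)(\alpha+\ddbar\widetilde\varphi_k)+s(\alpha+\ddbar\Psi_k)-\frac{2}{k}\alpha$. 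The first and last terms are $\geq-C(\tau_k+\frac1k)\omega_Y$. The middle term carries the Monge--Amp\`ere eigenvalue bounds scaled by the fixed constant $s$, which Weyl's inequality lets you absorb into $\gamma$.
\end{itemize}

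\textbf{A smaller error in Step~1.} The rescaling is backwards. Take $\varphi_k=\chi_k/(1+s)$ with $\chi_k$ approximating $(1+s)\varphi$. The quasi-equisingular property only gives $\mathcal{I}((1+\delta)(1+s)\varphi_k)\subset\mathcal{I}(\varphi)$, which does not imply $\mathcal{I}(\varphi_k)\subset\mathcal{I}(\varphi)$. You must scale up instead, as with the factor $1+\frac{2}{k}$ above. The reverse inclusion then comes from strong openness, $\mathcal{I}(\varphi)=\mathcal{I}((1+\delta)\varphi)$ for $\delta\leq s$, not from $\varphi\leq\varphi_k+C$.
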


\begin{remark}
    We briefly recall the construction of $\varphi_k$ for reader's convenience.
    As explained after Theorem~\ref{thm:equisingular}, by using standard gluing techniques and the technique of comparing integrals discussed in \cite{DPS01,GZ15-b,GZ15-c},
    one can obtain a quasi-equisingular approximation $\{\widetilde{\varphi}_k\}$ of $\varphi$ such that
    \begin{itemize}
        \item[(i)] $\widetilde{\varphi}_k$ is less singular than $\varphi$;
        \item[(ii)] $\mathcal{I}((1+\frac{2}{k})\widetilde{\varphi}_k)=\mathcal{I}_+(\varphi)=\mathcal{I}(\varphi)$;
        \item[(iii)] for every $t>0$, $\int_Y e^{-t\varphi}-e^{-t\max\{\varphi,\widetilde{\varphi}_k\}}dV_{\omega_Y}$ is finite for $k$ large enough and converges to $0$ as $k\to +\infty$.
    \end{itemize}

    Let $\pi_k:Y_k\rightarrow Y$ be a log resolution of $\varphi_k$.
    Then $\ddbar(\widetilde{\varphi}_k\circ\pi_k)$ is of the form $[E_k]+C^{\infty}$ where $[E_k]$ is a normal crossing $\mathbb{Q}$-divisor.
    Let $h_k$ be a smooth metric on $[E_k]$ such that for any $\delta>0$ small enough,
    $$\pi_k^*\omega_Y+\delta\image\Theta_{-E_k,h_k}$$
    is a K\"ahler form on $Y_k$.
    Then we can solve a Monge-Amp\`ere equation on $Y_k$,
    \begin{align*}
        &\left
        ((\pi_k^*\ddbar(\alpha+\widetilde{\varphi}_k))_\textup{ac}+\varepsilon_k\pi_k^*\omega_Y+ \delta_k\image\Theta_{-E_k,h_k}+\ddbar\psi_{k,\varepsilon_k,\delta_k}
        \right)^m     \\
        &=C(k,\varepsilon_k,\delta_k)\varepsilon_k^{m-\nd(T)}(\pi_k^*\omega_Y+\delta_k\image\Theta_{-E_k,h_k})^m,
    \end{align*}
    with the normalization condition
    $$\sup_{Y_k}(\widetilde{\varphi}_k\circ\pi_k+\psi_{k,\varepsilon_k,\delta_k}+\delta_k\log|E_k|_{h_k})=0.$$
    Set
    $$\varphi_k=(1+\frac{2}{k}-s)\widetilde{\varphi}_k\circ\pi_k+s(\widetilde{\varphi}_k\circ\pi_k+\psi_{k,\varepsilon_k,\delta_k}+\delta_k\log|E_k|_{h_k}),$$
    where $s$ is a constant to be determined later.
    Then by extending it from $Y\setminus\pi_k(E_k)$ to the whole $Y$, $\varphi_k$ induces a quasi-plurisubharmonic function on $Y$.
    Denote it also by $\varphi_k$ for simplicity.
    We have that $\varphi_k$ is more singular than $(1+2/k)\widetilde{\varphi}_k$.

    Note that $\widetilde{\varphi}_k+\psi_{k,\varepsilon_k,\delta_k}+\delta_k\log|E_k|_{h_k}$ induces a quasi-plurisubharmonic function on $Y$.
    By Skoda's uniform integrability theorem, there exists a constant $a>0$ such that
    $$\int_Y e^{-a(\widetilde{\varphi}_k+\psi_{k,\varepsilon_k,\delta_k}+\delta_k\log|E_k|_{h_k})}$$
    is uniformly bounded for all $k$.
    Take $s_1>0$ such that $\mathcal{I}(\varphi)=\mathcal{I}((1+s_1)\varphi)$.
    Take $s>0$ satisfying $s(1+s_1)/s_1\leq a$.

    Then one can check all properties in Lemma~\ref{lemma:regularization1}.
\end{remark}

\begin{remark}
    We remark that the proof of property (4) (\cite[Lemma 5.10]{Cao14}) relies on Skoda's uniform integrability theorem.
    To simplify the presentation and avoid unnecessary technicalities, we do not discuss this theorem in detail here.
    We refer to \cite[Theorem 8.11]{GZ17} and \cite{XZ26} (see also \cite{Sko72}) for the precise statement and more general results for reader's convenience.

\end{remark}

\subsection{Higher direct images of pseudo-effective line bundles}

In this subsection, we recall the construction of cohomology class representatives for higher direct images of pseudo-effective line bundles, as given in \cite{QZ26}.

The following result plays an important role in the proof of the main theorems.

\begin{theorem}[{\cite[Theorem 1.1, Theorem 1.6]{QZ26}}] \label{thm:construction of representatives}
    Let $\pi:X\rightarrow Y$ be a proper surjective holomorphic map from a K\"ahler manifold $X$ of dimension $n$ to an analytic space $Y$ of dimension $m$, and let $(L,h)$ be a pseudo-effective line bundle on $X$.
    \begin{itemize}
        \item[(1)] If $(X,\omega)$ is holomorphically convex, then for any smooth plurisubharmonic exhaustion function $\Phi$, the natural quotient map
        $$\iota: \mathcal{H}^{n,q}(X,L,h,\Phi):=\bigcap_{c\in\mathbb{R}}{\mathscr{H}^{n,q}_{h,\omega}(X_c,L)} \rightarrow H^q(X, K_X\otimes L\otimes \mathcal{I}(h))$$
        is an isomorphism, where $X_c=\{\Phi<c\}$ is the sublevel set given by $\Phi$.
        For any two smooth plurisubharmonic exhaustion functions $\Phi$ and $\Phi'$, we have
        $$\mathcal{H}^{n,q}(X,L,h,\Phi)=\mathcal{H}^{n,q}(X,L,h,\Phi').$$
        For any $u\in \mathcal{H}^{n,q}(X,L,h,\Phi)$, we have
        $$*u\in H^0(X,\Omega_X^{n-q}\otimes L\otimes \mathcal{I}(h)),$$
        where $*$ is the Hodge star operator related to the K\"ahler metric $\omega$.

        \item[(2)] For integers $p,q\geq 0,$ there exists a natural injective homomorphism
        $$\varphi_{p,q}:H^p(Y,R^q\pi_*(K_X\otimes L\otimes\mathcal{I}(h)))\to H^{p+q}(X,K_X\otimes L\otimes\mathcal{I}(h))$$
        defined as follows.

        Denote by $\mathcal{H}$ the inverse map of the natural quotient map $\iota$. Fix a Stein open cover $\mathcal{W}=\{W_i\}_{i\in I}$ of $Y$.
        Denote the open set $W_{i_0}\cap\cdots\cap W_{i_p}$ by $W_{i_0\ldots i_p}$, and the open set $\pi^{-1}(W_{i_0\ldots i_p})$ by $V_{i_0\ldots i_p}$ for simplicity.
        Fix a partition of unity $\{\rho_i\}_{i\in I}$ subordinate to $\mathcal{W}$.

        For a given $p$-cochain $\{\alpha_{i_0\ldots i_p}\}_{i_0\ldots i_p}$, where each $\alpha_{i_0\ldots i_p}$ is a cohomology class in
        $H^q(V_{i_0\ldots i_p},K_X\otimes L\otimes\mathcal{I}(h))$,
        satisfying the cocycle condition:
        $$\delta(\{\alpha_{i_0\ldots i_p}\}_{i_0\ldots i_p}):=
        \{\sum_{k=0}^{p+1}(-1)^k\alpha_{i_0\ldots \hat{i_k}\ldots i_{p+1}}|_{V_{i_0\ldots i_{p+1}}} \}_{i_0\ldots i_{p+1}}=0,$$
        we can define $L$-valued forms on $X$ by
        \begin{equation*}
            \left\{
            \begin{aligned}
                b_{i_0\ldots i_{p-1}}&=\sum_{j\in I}\pi^*\rho_j\mathcal{H}(\alpha_{ji_0\ldots i_{p-1}}), \\
                b_{i_0\ldots i_{p-k}}&=\sum_{j\in I}\pi^*\rho_j\bar\partial b_{ji_0\ldots i_{p-k}}, \\
                b:=\bar\partial b_{i_0}&=\sum_{j\in I}\bar\partial \pi^*\rho_j\wedge\bar\partial b_{ji_0}.
            \end{aligned}\right.
        \end{equation*}
        Then $\varphi_{p,q}$ is given by $\varphi_{p,q}(\{\alpha_{i_0\ldots i_p}\}_{i_0\ldots i_p}):=[b]\in H^{p+q}(X,K_X\otimes L\otimes\mathcal{I}(h)).$
        The map $\varphi_{p,q}$ is well-defined and independent of the choice of Stein open cover.
    \end{itemize}
\end{theorem}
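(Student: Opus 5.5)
This result is taken from \cite{QZ26}. I would prove it in two stages: first the harmonic theory of part (1), then the Čech--Dolbeault zigzag of part (2), which relies on part (1) holding at the level of forms rather than only of classes.

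\emph{Part (1).} Fix a smooth psh exhaustion $\Phi$. Each $X_c$ is a relatively compact weakly pseudoconvex open set, so it carries a complete Kähler metric. By Lemma~\ref{lemma:complete metric}, $X_c\setminus Z$ does too, for any analytic set $Z$.
\begin{enumerate}
\item Regularize $h$ by the equisingular approximation of Theorem~\ref{thm:equisingular}, applied to relatively compact subsets. This gives metrics $h_k$ that are smooth off analytic sets $Z_k$, satisfy $\mathcal{I}(h_k)=\mathcal{I}(h)$, and have curvature bounded below by $-\varepsilon_k\omega$.
\item Use Lemma~\ref{lemma:solve equation} and Lemma~\ref{lemma:extension} to solve $\bar\partial$ on $X_c$, then pass to the limit in $k$. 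This identifies $H^q(X_c,K_X\otimes L\otimes\mathcal{I}(h))$ with the $L^2$ cohomology on $X_c$, and that $L^2$ cohomology with the harmonic space $\mathscr{H}^{n,q}_{h,\omega}(X_c,L)$, via the orthogonal decomposition valid for singular metrics.
\item The central step is a Takegoshi-type computation. Take the Bochner--Kodaira--Nakano identity for a harmonic $u$ on $X_c$, twisted by a convex increasing function $\chi(\Phi)$ and regularized through the $h_k$. Integrating by parts, the positivity of $\ddbar\Phi$ and of the curvature forces two things: the terms $\langle\ddbar\Phi\wedge\Lambda u,u\rangle$ vanish, and $\iota_{\bar\partial\Phi}u=0$ near the boundary. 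Consequently $\bar\partial^*u=0$ persists on every $X_{c'}$ with $c'<c$, and $D'^*_h(*u)=0$. From this one gets that $*u$ is a holomorphic section of $\Omega^{n-q}_X\otimes L$, lying in $\mathcal{I}(h)$ because it is $L^2$ with respect to $h$.
\item The previous step gives injective restriction maps $\mathscr{H}(X_c)\to\mathscr{H}(X_{c'})$, compatible with the cohomology isomorphisms. Holomorphic convexity, via the Remmert reduction and Cartan~B, gives that $H^q(X,\cdot)\to H^q(X_c,\cdot)$ is an isomorphism for large $c$.
\item Together these show that $\bigcap_c\mathscr{H}(X_c)$ maps isomorphically onto $H^q(X,K_X\otimes L\otimes\mathcal{I}(h))$. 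Comparing $\Phi$ and $\Phi'$, whose sublevel sets interlace, gives independence of the exhaustion.
\end{enumerate}

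\emph{Part (2).} Each $V_{i_0\ldots i_p}=\pi^{-1}(W_{i_0\ldots i_p})$ is holomorphically convex, because $W_{i_0\ldots i_p}$ is Stein and $\pi$ is proper. So part (1) gives canonical harmonic representatives $\mathcal{H}(\alpha_{i_0\ldots i_p})$. Moreover, restricting a harmonic form in the sense of (1) to a smaller such preimage yields again the harmonic representative there, by the independence-of-exhaustion statement. Hence the Čech cocycle condition holds \emph{on the nose} for the forms $\mathcal{H}(\alpha)$, not merely up to $\bar\partial$-exact terms.

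With this, the standard double-complex calculation goes through.
\begin{itemize}
\item The differences produced by the partition of unity $\pi^*\rho_j$ are exactly Čech coboundaries. This is checked inductively on the $b_{i_0\ldots i_{p-k}}$.
\item One obtains that $b$ is globally defined and $\bar\partial$-closed, and that it has $L^2_{\loc}$ coefficients with respect to $h$.
\item Independence of the cover and of $\{\rho_i\}$ follows by passing to a common refinement.
\end{itemize}

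For injectivity, I would show that the Leray spectral sequence degenerates in the strong sense that this map splits it. Suppose $b=\bar\partial c$. Using harmonicity on the $V$'s and the vanishing of higher cohomology of Stein sets in the $Y$-direction, I would run the zigzag backwards. This should produce a Čech cochain whose coboundary is $\{\alpha\}$. The mechanism is the Kollár-type argument that harmonic forms are "fiberwise canonical", so no $d_r$ differential can hit them.

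\emph{Main obstacle.} I expect the hardest step to be the Takegoshi-type Bochner argument with a \emph{singular} metric $h$. The boundary and curvature terms must be controlled uniformly along the approximations $h_k$, with only $\mathcal{I}(h_k)=\mathcal{I}(h)$ and curvature lower bounds $-\varepsilon_k\omega$ available. One then has to extract a harmonic limit whose restriction property survives the limit. I would attack this by weak $L^2$ compactness, together with the uniform estimates from Lemma~\ref{lemma:solve equation} applied with constant $\tau$ and $A$.
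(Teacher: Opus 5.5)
The paper gives no proof of this statement; it is quoted from \cite{QZ26}, so I am judging your sketch on its own. It has a genuine gap in Part~(1), and not where you expected it.

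\textbf{Part (1).} Steps~2 and~4--5 rest on two false identifications. First, $H^q(X_c,K_X\otimes L\otimes\mathcal{I}(h))$ is not the reduced $L^2$ cohomology of $X_c$, which is what $\mathscr{H}^{n,q}_{h,\omega}(X_c,L)$ computes. Second, the restriction $H^q(X,\cdot)\to H^q(X_c,\cdot)$ is not an isomorphism for large $c$. Take $X=\mathbb{C}\times E$ with $E$ an elliptic curve, $L$ trivial with the trivial metric, $\Phi=|t|^2$ and $q=1$. Then $H^1(X,K_X)\cong\mathcal{O}(\mathbb{C})$ and $H^1(X_c,K_X)\cong\mathcal{O}(\Delta_{\sqrt c})$. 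Meanwhile $\mathscr{H}^{2,1}(X_c)$ consists of the forms $f(t)\,dt\wedge dz\wedge d\bar z$ with $f$ in the Bergman space $A^2(\Delta_{\sqrt c})$. Only $\bigcap_c\mathscr{H}^{2,1}(X_c)$ recovers $\mathcal{O}(\mathbb{C})$.

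So the theorem is an inverse-limit statement, and your ``together these show'' hides its content. Granting your step~3, $\bigcap_c\mathscr{H}(X_c)=\varprojlim_c\mathscr{H}(X_c)$. Injectivity of $\iota$ is then easy: a form harmonic on $X_c$ that equals $\bar\partial v$ with $v\in L^2(X_c)$ vanishes. Surjectivity needs two further inputs:
(a) Hausdorffness of $H^q(X_{c'},\cdot)$, so that the harmonic projections of $g|_{X_c}$ stay cohomologous to $g$ on smaller sublevel sets and therefore glue;
(b) $\varprojlim^1_c H^{q-1}(X_c,\cdot)=0$, since by the Milnor sequence this group is exactly the kernel of $H^q(X,\cdot)\to\varprojlim_c H^q(X_c,\cdot)$.
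One way to get both is the Remmert reduction $r:X\to S$ with Cartan~B, used differently from how you use it. Since $\Phi$ is constant on the fibres of $r$, we have $X_c=r^{-1}(S_c)$ with $S_c$ Stein. Then $H^j(X_c,\cdot)\cong H^0(S_c,R^jr_*(\cdot))$ are Fr\'echet spaces whose restriction maps have dense image (Runge), which gives (a) and the Mittag-Leffler property needed for (b).

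\textbf{Part (2).} You rightly require the cocycle condition for the forms $\mathcal{H}(\alpha)$ themselves. But ``by the independence-of-exhaustion statement'' does not give it. Independence compares two exhaustions of the same manifold, whereas for $V'=\pi^{-1}(W')\subset V=\pi^{-1}(W)$ the sublevel sets of an exhaustion of $V'$ need not be sublevel sets of any psh function on $V$ (e.g.\ when $W'$ is not Runge in $W$). What works is a pointwise property. Your step~3 in fact yields $\bar\partial\Phi\lrcorner u=0$ on all of $X_c$. Apply this to every $\Phi=\pi^*\phi$ with $\phi$ a strictly psh exhaustion of $W$; this gives $\pi^*\bar\eta\lrcorner u=0$ for all $(0,1)$-covectors $\bar\eta$ on $W$. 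Together with $\bar\partial^*u=0$ in the interior, this is the $\bar\partial$-Neumann condition on every sublevel set in $V'$.

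With that in hand, injectivity of $\varphi_{p,q}$ needs no backward zigzag. Such a zigzag would have to dispose of obstruction classes in $H^\bullet(V_{i_0\ldots i_k},\cdot)$ on non-Stein sets. Instead, note that the harmonic cochain $(\mathcal{H}(\alpha_{i_0\ldots i_p}))$ is killed by both $\delta$ and $\bar\partial$, so it is a total cocycle of the \v{C}ech--Dolbeault double complex. Hence every $E_2$-class of the Leray spectral sequence lifts to a cocycle, all $d_r$ vanish, and $E_2=E_\infty$. Consequently $\varphi_{p,q}$ followed by $F^pH^{p+q}\to E_\infty^{p,q}$ is the identity, which gives injectivity.
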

\begin{remark}\label{Rm:representative and torsion free}
    A direct calculation shows that
    \begin{equation} \label{formula:representative}
        b=\sum_{j\in I}\bar\partial \pi^*\rho_j\wedge\bar\partial b_{ji_0}=
        \sum_{i_0,\cdots,i_p}\bar\partial \pi^*\rho_{i_0}\wedge\cdots\wedge\bar\partial \pi^*\rho_{i_{p-1}}\wedge\mathcal{H}(\alpha_{i_0\ldots i_p}).
    \end{equation}

    As an application of (1), we obtained Koll\'ar package in a very general setting. See \cite[Corollary 3.8]{QZ26} for the precise statement.
    In particular, $R^q\pi_*(K_X\otimes L\otimes\mathcal{I}(h))$ is torsion-free for $q\geq 0$ and vanishes if $q>n-m$.
    See also \cite{Fuj13,FM21,LLYZ25,Mat22,MZ19,SZ23,Tak95,ZZ22} for related topics.
\end{remark}

\section{Girbau type theorem} \label{Sec:Girbau}

Before proving our Girbau-Nadel type vanishing theorem on weakly pseudoconvex K\"ahler manifolds, we first establish the following Lemma~\ref{lemma:computation} via direct computations.
This lemma will be used frequently in the paper.

\begin{lemma}\label{lemma:computation}
    Let $\pi:(X,\omega_X)\rightarrow (Y,\omega_Y)$ be a proper holomorphic map between complex manifolds, where $\dim X = n$ and $\dim Y = m$.
    Let $(L,h)$ be a Hermitian holomorphic line bundle on $X$.
    Let $\sigma$ be a continuous real semi-positive $(1,1)$-form on $Y$ and
    $0\leq\lambda_1\leq\cdots\leq\lambda_m$ eigenvalues of $\sigma$ with respect to $\omega_Y$ satisfying $\lambda_p>0$.
    Let $\varphi$ be an $L$-valued $(n,q)$-form on $X$ and $\rho_1,\ldots,\rho_p$ smooth functions on $Y$.

    Consider the operator $B=[\pi^*\sigma,\Lambda_{\omega_X}]$.
    Then at every point $x\in X$,
    there exists a constant $C>0$ such that, in a neighborhood of $x$, we have
    \begin{align*}
        \langle B^{-1}&(\bar\partial \pi^*\rho_1\wedge\cdots\wedge\bar\partial \pi^*\rho_p\wedge  \varphi), (\bar\partial \pi^*\rho_1\wedge\cdots\wedge\bar\partial \pi^*\rho_p\wedge \varphi) \rangle_{h,\omega_X}\\
        &\leq C \frac{1}{\lambda_1+\cdots+\lambda_p}
        |\bar\partial \rho_1\wedge\cdots\wedge\bar\partial\rho_p|^2_{\omega_Y}\cdot |\varphi|^2_{h,\omega}.
    \end{align*}
\end{lemma}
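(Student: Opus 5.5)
The estimate is pointwise linear algebra, so I will fix a point $x$ with $y=\pi(x)$. I will use the variational description of the inverse,
$$\langle B^{-1}g,g\rangle=\sup_{v\neq 0}\frac{|\langle g,v\rangle|^2}{\langle Bv,v\rangle},$$
with the convention that the left side is finite exactly when $g$ is controlled in this way. The goal is to bound $|\langle g,v\rangle|^2$ by a multiple of $\langle Bv,v\rangle$ for every $L$-valued $(n,q+p)$-form $v$ at $x$, where
$$g=\bar\partial\pi^*\rho_1\wedge\cdots\wedge\bar\partial\pi^*\rho_p\wedge\varphi .$$

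\textbf{Step 1 (pulled-back frame).} At $y$ I choose an $\omega_Y$-orthonormal frame $e_1,\dots,e_m$ of $T^{*1,0}_Y$ diagonalizing $\sigma$, so that $\sigma=\sum_j\lambda_j\,\image\, e_j\wedge\bar e_j$. Set $f_j=\pi^*e_j$. Then
$$\pi^*\sigma=\sum_j\lambda_j\,\image\, f_j\wedge\bar f_j .$$
For $(n,s)$-forms one has the standard identity
$$\langle[\image\,\theta\wedge\bar\theta,\Lambda_{\omega_X}]v,v\rangle=|\bar\theta^{\sharp}\lrcorner v|^2 ,$$
where $\bar\theta^\sharp\lrcorner$ denotes the adjoint of $\bar\theta\wedge\cdot$. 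By linearity in $\theta\wedge\bar\theta$ this gives
$$\langle Bv,v\rangle=\sum_j\lambda_j|\bar f_j^\sharp\lrcorner v|^2 .$$
The $f_j$ need not be independent, since $\pi$ may fail to be a submersion at $x$, but no independence is used anywhere below.

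\textbf{Step 2 (expansion and termwise estimate).} I expand $\bar\partial\rho_1\wedge\cdots\wedge\bar\partial\rho_p=\sum_{|K|=p}c_K\,\bar e_K$, so that $g=\sum_K c_K\,\bar f_K\wedge\varphi$. For a fixed $K$ I write
$$\langle\bar f_K\wedge\varphi,v\rangle=\langle\varphi,\bar f_K^\sharp\lrcorner v\rangle .$$
For each $k\in K$, the contraction $\bar f_K^\sharp\lrcorner v$ equals, up to sign, the contraction by the remaining $\bar f_j$, $j\in K\setminus\{k\}$, applied to $\bar f_k^\sharp\lrcorner v$. Its norm is therefore at most $C_1|\bar f_k^\sharp\lrcorner v|$, where $C_1$ bounds $\prod_j|f_j|_{\omega_X}$, which is controlled by $|d\pi|$ near $x$. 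Taking the minimum over $k\in K$ and bounding a minimum by a weighted average gives
$$|\langle\bar f_K\wedge\varphi,v\rangle|^2\le C_1^2|\varphi|^2\,\frac{\sum_{k\in K}\lambda_k|\bar f_k^\sharp\lrcorner v|^2}{\sum_{k\in K}\lambda_k}\le C_1^2|\varphi|^2\,\frac{\langle Bv,v\rangle}{\lambda_1+\cdots+\lambda_p}.$$
The last inequality uses that the eigenvalues are ordered increasingly, so $\sum_{k\in K}\lambda_k\ge\lambda_1+\cdots+\lambda_p$ for every $|K|=p$. This sum is positive because $\lambda_p>0$, so the denominator never vanishes.

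\textbf{Step 3 (summing over $K$).} By the triangle inequality and Cauchy--Schwarz, $(\sum_K|c_K|)^2\le\binom{m}{p}\sum_K|c_K|^2$. Since the $\bar e_K$ are orthonormal, $\sum_K|c_K|^2=|\bar\partial\rho_1\wedge\cdots\wedge\bar\partial\rho_p|^2_{\omega_Y}$. Taking the supremum over $v$ then yields the claimed bound with
$$C=\binom{m}{p}C_1^2 .$$

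\textbf{Main obstacle: uniformity.} The only delicate point is making $C$ uniform on a neighborhood of $x$. The eigenframe of $\sigma$ need not vary continuously, but the argument above is carried out separately at each point, and the only constants that enter are $C_1$ and $\binom{m}{p}$. The bound $C_1$ depends only on $\sup|d\pi|_{\omega_X,\omega_Y}$ over a relatively compact neighborhood of $x$, since each $f_j$ is the pullback of a unit covector. Hence it is uniform there, and no continuity of the frame is needed.
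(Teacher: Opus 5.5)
Your proof is correct, and it takes a genuinely different route from the paper's.

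\textbf{The paper's route.} It works entirely in coordinates:
\begin{itemize}
\item it diagonalizes $\pi^*\sigma$ at $x$, with eigenvalues $\tau_l$, and $\sigma$ at $y$;
\item it writes the inverse explicitly as $\langle B^{-1}g,g\rangle=\sum_{|L|=p+q}(\sum_{l\in L}\tau_l)^{-1}|g_L|^2$;
\item it expands $\bar\partial\pi^*\rho_1\wedge\cdots\wedge\bar\partial\pi^*\rho_p$ by Cauchy--Binet into products $\det(\rho,t_J)\det(\pi_J,z_K)$;
\item it reduces everything to $(\lambda_1+\cdots+\lambda_p)|\det(\pi_J,z_K)|^2\le C\sum_{l\in K}\tau_l$.
\end{itemize}
That last inequality is proved from $\tau_l\ge\sum_\alpha|\partial\pi_{j_\alpha}/\partial z_l|^2\lambda_\alpha$ together with a Laplace expansion along each row.

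\textbf{Your route.} You never diagonalize $\pi^*\sigma$ on $X$. Instead, the dual (sup) characterization of $\langle B^{-1}g,g\rangle$, combined with $\langle Bv,v\rangle=\sum_j\lambda_j|\bar f_j^\sharp\lrcorner v|^2$ for the possibly dependent pulled-back covectors $f_j=\pi^*e_j$, lets you work in the eigenframe of $\sigma$ on $Y$. The correspondence with the paper's steps is as follows.
\begin{itemize}
\item Factoring the contraction by $\bar f_K$ through a single $\bar f_k$ is the coordinate-free counterpart of the paper's Laplace expansion along one row.
\item Your step ``minimum $\le$ $\lambda$-weighted average'' plays the role of the paper's weighted sum over $\alpha$ combined with $\lambda_{j_\alpha}\ge\lambda_\alpha$.
\end{itemize}

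\textbf{What your version buys.}
\begin{itemize}
\item It handles degenerate directions automatically. The paper's explicit inverse formula tacitly requires $g_L=0$ whenever $\sum_{l\in L}\tau_l=0$; this is true, but it is not stated.
\item It produces an explicit constant, $\binom{m}{p}$ times a power of $\sup|d\pi|_{\omega_X,\omega_Y}$ over a relatively compact neighborhood. Uniformity near $x$ is therefore immediate, whereas the paper only remarks that its constant depends on the chosen coordinates.
\item It never needs the eigenframe to vary continuously, since the right-hand side of the estimate is frame-independent.
\end{itemize}

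One small imprecision: $C_1$ should bound products of $p-1$ of the norms $|f_j|_{\omega_X}$, for instance $C_1=\max\{1,\sup|d\pi|\}^{p-1}$. This does not affect the argument.

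The paper's computation, in exchange, is fully explicit and needs no duality argument.
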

\begin{proof}
    For a given point $x\in X$, we choose a local coordinate $(z_1,\ldots,z_n)$ centered at $x$ such that
    $$\omega_X= \image\sum_{j=1}^n dz_j\wedge d\bar{z}_j \quad \text{and} \quad \pi^*\sigma=\image\sum_{j=1}^n \tau_jdz_j\wedge d\bar{z}_j \text{ at } x.$$
    We choose a local coordinate
    $(t_1,t_2,\ldots,t_m)$ centered at $y=\pi(x)$ such that
    $$\omega_Y= \image\sum_{j=1}^m dt_j\wedge d\bar{t}_j \quad \text{and} \quad \sigma=\image\sum_{j=1}^m \lambda_jdt_j\wedge d\bar{t}_j \text{ at } y.$$
    Write $\pi=(\pi_1,\pi_2,\ldots,\pi_m)$ with respect to these coordinates. We have
    $$\sum_{j=1}^{m}\lambda_j\frac{\partial \pi_j}{\partial z_k} \overline{\frac{\partial \pi_j}{\partial z_l}}
    =\delta_{kl}\tau_k.$$
    In particular,
    $$\tau_k=\sum_{j=1}^m|\frac{\partial \pi_j}{\partial z_k}|^2\lambda_j.$$
    Denote $dz:=dz_1\wedge\cdots\wedge dz_n$, $d\bar{z}_I:=d\bar{z}_{i_1}\wedge\cdots\wedge d\bar{z}_{i_q}$ for an
    ordered multi-index $I=\{i_1<i_2<\cdots<i_q\}$, and $e$ a local frame of $L$ that gives an orthonormal basis at $x$.
    For the local expressions with respect to these coordinates
    $$\varphi=\sum_{|I|=q}\varphi_I e\otimes dz\wedge d\bar{z}_I \quad \text{and} \quad
    \bar\partial \rho_i=\sum_{j=1}^{m}\frac{\partial\rho_i}{\partial\bar{t}_j}d\bar{t}_j, \quad i=1,2,\ldots,p, $$
    we can easily see that
    $$|\varphi|^2_{\omega_X}=\sum_{|I|=q}|\varphi_I|^2 \quad \text{and} \quad
    \bar\partial \pi^*\rho_i=\sum_{k=1}^n\sum_{j=1}^{m}\frac{\partial\rho_i}{\partial\bar{t}_j}\overline{\frac{\partial \pi_j}{\partial z_k}}d\bar{z}_k, \quad i=1,2,\ldots,p.$$
    By direct computations, we have
    \begin{align*}
        \bar\partial \rho_1\wedge\cdots\wedge\bar\partial\rho_p
        &=(\sum_{j=1}^{m}\frac{\partial\rho_1}{\partial\bar{t}_j}d\bar{t}_j)\wedge \cdots \wedge (\sum_{j=1}^{m}\frac{\partial\rho_p}{\partial\bar{t}_j}d\bar{t}_j)      \\
        &=\sum_{1\leq j_1,\ldots,j_p\leq m}\frac{\partial\rho_1}{\partial\bar{t}_{j_1}}\cdots\frac{\partial\rho_1}{\partial\bar{t}_{j_p}}
        d\bar{t}_{j_1}\wedge\cdots\wedge d\bar{t}_{j_p}  \\
        &=\sum_{J=\{1\leq j_1<j_2<\cdots<j_p\leq m\}} \det (\frac{\partial\rho_\alpha}{\partial\bar{t}_{j_\beta}})_{\alpha,\beta=1,\ldots,p} d\bar{t}_J.
    \end{align*}
    We denote $\det (\rho,t_J):=\det (\frac{\partial\rho_\alpha}{\partial\bar{t}_{j_\beta}})_{\alpha,\beta=1,\ldots,p}$ for $J=\{j_1<j_2<\cdots<j_p\}$.
    Then one can see that
    $$\bar\partial \rho_1\wedge\cdots\wedge\bar\partial\rho_p
    =\sum_{|J|=p}\det (\rho,t_J)d\bar{t}_J \quad \text{and}
    \quad |\bar\partial \rho_1\wedge\cdots\wedge\bar\partial\rho_p|^2_{\omega_Y}
    =\sum_{|J|=p}|\det (\rho,t_J)|^2.$$
    Similarly, we denote $\det (\pi_J,z_K):=\det (\frac{\partial \pi_{j_\alpha}}{\partial\bar{z}_{k_\beta}})$
    for $J=\{j_1<j_2<\cdots<j_p\}$ and $K=\{k_1<k_2<\cdots<k_p\}$. Then we have
    \begin{align*}
        \bar\partial \pi^*\rho_1\wedge&\cdots\wedge\bar\partial \pi^*\rho_p
        =(\sum_{k=1}^n\sum_{j=1}^{m}\frac{\partial\rho_1}{\partial\bar{t}_j}\overline{\frac{\partial \pi_j}{\partial z_k}}d\bar{z}_k)
        \wedge \cdots \wedge (\sum_{k=1}^n\sum_{j=1}^{m}\frac{\partial\rho_p}{\partial\bar{t}_j}\overline{\frac{\partial \pi_j}{\partial z_k}}d\bar{z}_k)      \\
        &=\sum_{1\leq j_1,\ldots,j_p\leq m}\frac{\partial\rho_1}{\partial\bar{t}_{j_1}}\cdots\frac{\partial\rho_p}{\partial\bar{t}_{j_p}}
        \cdot\sum_{1\leq k_1,\ldots,k_p\leq n}
        \overline{\frac{\partial \pi_{j_1}}{\partial z_{k_1}}}\cdots\overline{\frac{\partial \pi_{j_p}}{\partial z_{k_p}}}
        d\bar{z}_{k_1}\wedge\cdots\wedge d\bar{z}_{k_p}  \\
        &=\sum_{|J|=|K|=p} \det(\rho,t_J)\cdot\det(\pi_J,z_K)  d\bar{z}_K.
    \end{align*}
    Then, by putting
    $$g_K:=\sum_{|J|=p}\det(\rho,t_J)\cdot\det(\pi_J,z_K),$$
    we have
    $$|g_K|^2\leq|\bar\partial \rho_1\wedge\cdots\wedge\bar\partial\rho_p|^2_{\omega_Y}\cdot (\sum_{|J|=p}|\det(\pi_J,z_K)|^2)$$
    by the Cauchy–Schwarz inequality. Then, by straightforward computations, we obtain
    \begin{align*}
        \langle B^{-1}&(\bar\partial \pi^*\rho_1\wedge\cdots\wedge\bar\partial \pi^*\rho_p\wedge  \varphi), (\bar\partial \pi^*\rho_1\wedge\cdots\wedge\bar\partial \pi^*\rho_p\wedge \varphi) \rangle_{h,\omega_X} \\
        &=\sum_{|L|=p+q}(\sum_{l\in L}\tau_l)^{-1} (\sum_{I\cup K=L}\sgn^{L}_{IK}\varphi_Ig_K) (\sum_{I'\cup K'=L}\sgn^{L}_{I'K'}\varphi_{I'}g_{K'}) \\
        &\leq C_1\sum_{|L|=p+q}(\sum_{l\in L}\tau_l)^{-1}
        \sum_{I\cup K=L}|\varphi_I|^2|g_K|^2 \\
        &\leq C_2\sum_{|L|=p+q}(\sum_{l\in L}\tau_l)^{-1}
        |\varphi|^2_{\omega_X} |\bar\partial \rho_1\wedge\cdots\wedge\bar\partial\rho_p|^2_{\omega_Y}\cdot (\sum_{\substack{K\subset L \\ |J|=p}}|\det(\pi_J,z_K)|^2).
    \end{align*}
    for some constants $C_1,C_2>0$ independent of the point $x$ and the local coordinates.
    The first inequality follows from the fundamental inequality $(\sum_{i=1}^{N}|a_i|)^2\leq 2^{N-1}\sum_{i=1}^{N}|a_i|^2$, and the second inequality follows from the estimate for $g_K$ above.

    Then it is enough to show that for any fixed multi-index $L$, the following inequality holds for some constant $C>0$.

    $$(\lambda_1+\cdots+\lambda_p)(\sum_{\substack{K\subset L \\ |J|=p}}|\det(\pi_J,z_K)|^2)  \leq C\sum_{l\in L}\tau_l.$$

    Note that for any $J=\{1\leq j_1<j_2<\cdots<j_p\leq m\}$, we have
    $$\tau_l=\sum_{j=1}^m|\frac{\partial \pi_j}{\partial z_l}|^2\lambda_j
    \geq\sum_{\alpha=1}^{p}|\frac{\partial \pi_{j_\alpha}}{\partial z_l}|^2\lambda_{j_\alpha}
    \geq\sum_{\alpha=1}^{p}|\frac{\partial \pi_{j_\alpha}}{\partial z_l}|^2\lambda_\alpha.$$
    Thus for any fixed multi-index $K\subset L$,
    $$\sum_{l\in L}\tau_l\geq\sum_{l\in K}\tau_l\geq C\sum_{l\in K}\sum_{J=\{j_1<j_2<\cdots<j_p\}} \sum_{\alpha=1}^{p}
    |\frac{\partial \pi_{j_\alpha}}{\partial z_l}|^2\lambda_\alpha$$
    for some constant $C>0$.
    Therefore, we only need to show that
    $$(\lambda_1+\cdots+\lambda_p)|\det(\pi_J,z_K)|^2\leq C\sum_{l\in K}\sum_{\alpha=1}^{p}
    |\frac{\partial \pi_{j_\alpha}}{\partial z_l}|^2\lambda_\alpha$$
    for some constant $C>0$.
    It is sufficient to show that for any fixed $\alpha$,
    $$|\det(\pi_J,z_K)|^2\leq C\sum_{l\in K}|\frac{\partial \pi_{j_\alpha}}{\partial z_l}|^2.$$
    By Laplace Expansion Theorem, the conclusion holds.
    We remark that the constant $C$ depends on the choice of local coordinates around $x$.

    This completes the proof.
\end{proof}

\begin{theorem}[=Theorem \ref{MainThm2}]\label{MainThm2'}
    Let $\pi:X\rightarrow Y$ be a proper holomorphic surjection from a weakly pseudoconvex K\"ahler manifold $X$ of dimension $n$ to a complex manifold $Y$ of dimension $m$.
    Let $L$ be a holomorphic Hermitian line bundle over $X$ equipped with a singular Hermitian metric $h$ and $\sigma$ a continuous real semi-positive $(1,1)$-form on $Y$ satisfying $\sigma^l\neq 0$ everywhere on $Y$.
    We assume that $\image\Theta_{L,h}\geq \pi^*\sigma$. Then
    $$H^p(Y,R^q\pi_*(K_X\otimes L\otimes\mathcal{I}(h)))=0 \quad \text{for any } p\geq m-l+1 \text{ and } q\geq0. $$
\end{theorem}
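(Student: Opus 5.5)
Since $H^p(Y,\cdot)$ of a coherent sheaf on $Y$ is computed by Čech cohomology over a Stein cover, and $\varphi_{p,q}$ of Theorem~\ref{thm:construction of representatives}(2) is injective, the plan is to reduce to showing $\varphi_{p,q}(\alpha)=0$ for every Čech $p$-cocycle $\alpha=\{\alpha_{i_0\ldots i_p}\}$ with $p\geq m-l+1$. One subtlety is that $Y$ is noncompact and $X$ is only weakly pseudoconvex. I would therefore first work over a relatively compact sublevel set $X_c=\{\Phi<c\}$ of a smooth psh exhaustion $\Phi$, together with its image, which is relatively compact in $Y$. A locally finite Stein cover of a neighborhood of $\pi(\bar X_c)$ carries a partition of unity $\{\rho_i\}$. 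The local groups are $H^q(V_{i_0\ldots i_p},K_X\otimes L\otimes\mathcal I(h))$ with $V=\pi^{-1}(W)$ holomorphically convex, so Theorem~\ref{thm:construction of representatives}(1) provides harmonic representatives $\mathcal H(\alpha_{i_0\ldots i_p})$. By \eqref{formula:representative}, the class $\varphi_{p,q}(\alpha)$ is represented by
\[
b=\sum \bar\partial\pi^*\rho_{i_0}\wedge\cdots\wedge\bar\partial\pi^*\rho_{i_{p-1}}\wedge\mathcal H(\alpha_{i_0\ldots i_p}),
\]
which is an $(n,p+q)$-form that is $L^2$ with respect to $h$ on $X_c$.

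To solve $\bar\partial u=b$ on $X_c$, I would use Lemma~\ref{lemma:solve equation} with $\tau,A$ constant. The metric $h$ is singular, so I would first use Theorem~\ref{thm:equisingular} on $X_c$ to get $h_k$ smooth off analytic sets $Z_k$, with $\mathcal I(h_k)=\mathcal I(h)$ and $\image\Theta_{L,h_k}\geq\pi^*\sigma-\varepsilon_k\omega_X$. Here $h_k$ is less singular than $h$, so $\|b\|_{h_k}\leq C\|b\|_h$. By Lemmas~\ref{lemma:complete metric} and~\ref{lemma:extension}, it suffices to work on $X_c\setminus Z_k$ with the Kähler metric $\omega_X$, which need not be complete there. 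The curvature operator satisfies
\[
B_k=[\image\Theta_{L,h_k},\Lambda]\geq[\pi^*\sigma,\Lambda]-(p+q)\varepsilon_k .
\]
Taking $\delta=\delta_k=(p+q)\varepsilon_k$ gives $B_k+\delta_k\geq B:=[\pi^*\sigma,\Lambda]$ on $(n,p+q)$-forms.

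The key estimate is Lemma~\ref{lemma:computation}, applied with the $p$ functions $\rho_{i_0},\ldots,\rho_{i_{p-1}}$. The hypothesis $\sigma^l\neq0$ means that at most $m-l$ eigenvalues vanish. Hence $\lambda_p>0$ precisely when $p\geq m-l+1$, and by continuity $\lambda_1+\dots+\lambda_p$ is bounded below on the compact set $\pi(\bar X_c)$. The constant $C$ in Lemma~\ref{lemma:computation} is pointwise, but a compactness argument on $\bar X_c$ should make it uniform. This gives
\[
\int\langle (B_k+\delta_k)^{-1}b,b\rangle\leq C'\sum\|\mathcal H(\alpha_{i_0\ldots i_p})\|^2_{h}<\infty ,
\]
uniformly in $k$.

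Lemma~\ref{lemma:solve equation} then yields $\bar\partial u_k+\sqrt{\delta_k}v_k=b$ with $\|u_k\|,\|v_k\|$ uniformly bounded. Passing to weak limits as $k\to\infty$ gives $\sqrt{\delta_k}v_k\to0$, and $\bar\partial u=b$ on $X_c$ with $u$ in $L^2$ for $h$. The last point uses monotonicity of $h_k$ together with Fatou, or property (3) of Theorem~\ref{thm:equisingular}. So $\varphi_{p,q}(\alpha)|_{X_c}=0$ in $H^{p+q}(X_c,K_X\otimes L\otimes\mathcal I(h))$.

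The main obstacle is passing from $X_c$ to all of $X$, since the groups involved may be infinite-dimensional. I would run the argument on $Y_c$ with cover-compatible exhaustions and use naturality of $\varphi_{p,q}$ under restriction, deducing $H^p(\pi(X_c),R^q\pi_*(\cdot))=0$ for each $c$. A Mittag-Leffler argument should then conclude, because the $R^q\pi_*$-cohomology on Stein-type exhaustions satisfies the needed density of restrictions; alternatively, one can solve with weights $e^{-\chi(\Phi)}$ directly on $X$ as in Ohsawa–Matsumura. The other delicate point I expect is the uniformity of the constant in Lemma~\ref{lemma:computation} near critical points of $\pi$.
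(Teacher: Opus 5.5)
\textbf{Verdict: there is a gap in the globalization step.} Your argument on a fixed sublevel set $X_c$ matches the paper's Step~2. You use the equisingular approximations $h_k$ of $h$ on $X_c$, and Lemma~\ref{lemma:computation} with $\lambda_p>0$ for $p\geq m-l+1$, with constants made uniform on $\overline{X_c}$ by compactness (which also settles your worry about critical points of $\pi$). You then apply Lemma~\ref{lemma:solve equation} on $X_c\setminus Z_k$ with $\delta_k=(p+q)\varepsilon_k\to0$ and take weak limits. The gap is the step you flag yourself, passing from $X_c$ to $X$, and the Mittag-Leffler route you propose does not work as stated.

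\textbf{Why the Mittag-Leffler route fails.}
\begin{itemize}
\item $X_c=\{\Phi<c\}$ is in general not of the form $\pi^{-1}(Y')$. So $[b|_{X_c}]=0$ in $H^{p+q}(X_c,K_X\otimes L\otimes\mathcal{I}(h))$ says nothing about $H^p(\pi(X_c),R^q\pi_*(\cdots))$. Naturality and injectivity of $\varphi_{p,q}$ only concern saturated sets $\pi^{-1}(Y')$, with target $H^{p+q}(\pi^{-1}(Y'),\cdots)$. Those sets need not be weakly pseudoconvex, so your $L^2$ argument cannot be run on them.
\item Even granting vanishing on an exhaustion, the Milnor sequence leaves an obstruction ${\varprojlim}^1_c H^{p-1}(Y_c,\cdot)$, or on $X$ the term ${\varprojlim}^1_c H^{p+q-1}(X_c,\cdot)$. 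For $p\geq m-l+2$ this vanishes because the groups themselves vanish. In the borderline degree $p=m-l+1$ nothing controls it: $Y$ is an arbitrary complex manifold with no Runge-type density of restrictions.
\item Changing the Stein cover with $c$ produces different forms $b$, so solutions for different $c$ cannot be compared or passed to a limit.
\end{itemize}

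\textbf{What the paper does instead.} It uses the alternative you mention only in passing, and that should be your main line.
\begin{itemize}
\item Fix one locally finite Stein cover of all of $Y$ and one partition of unity. Then $b$ is a single $\bar\partial$-closed form on $X$, locally $L^2$ for $h$.
\item The bound for $\langle[\pi^*\sigma,\Lambda]^{-1}b,b\rangle_h$ depends on compact sets. It involves the constant $C(K)$ of Lemma~\ref{lemma:computation}, $\inf\lambda_p$, $|\bar\partial\rho_j|$, and the local $L^2$ norms of the $\mathcal{H}(\alpha_{i_0\ldots i_p})$. Choose a smooth convex increasing $\chi$ growing fast enough that, with $\widetilde h=he^{-\chi\circ\Phi}$, one has $\int_X\langle[\pi^*\sigma,\Lambda]^{-1}b,b\rangle_{\widetilde h}\,dV_{\omega_X}<\infty$.
\item Since $\chi\circ\Phi$ is smooth and plurisubharmonic, $\widetilde h_k=h_ke^{-\chi\circ\Phi}$ still satisfies $\image\Theta_{L,\widetilde h_k}\geq\pi^*\sigma-\varepsilon_k\omega_X$, and $\mathcal{I}(\widetilde h)=\mathcal{I}(h)$.
\item Your solutions on $X_c$ then satisfy $\|u_c\|_{\widetilde h}\leq C$ with $C$ independent of both $c$ and $k$. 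A diagonal weak limit as $c\to\infty$ gives $u$ on $X$ with $\bar\partial u=b$ and $u$ locally $L^2$ for $h$. Hence $[b]=0$ in $H^{p+q}(X,K_X\otimes L\otimes\mathcal{I}(h))$, and injectivity of $\varphi_{p,q}$ finishes.
\end{itemize}
The uniformity of the estimate in $c$, obtained from the weight $e^{-\chi\circ\Phi}$, is exactly what removes the ${\varprojlim}^1$ obstruction.
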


\begin{proof}
    We adopt the notation from Theorem \ref{thm:construction of representatives}. It suffices to show that the map $\varphi_{p,q}$ is the zero map.
    To achieve this, we need to find an $L$-valued $(n,p+q-1)$-form $u$ with appropriate $L^2$-estimate such that $\bar\partial u=b$.

    \textbf{Step 1: Basic construction and setting.}

    Let $0\leq\lambda_1\leq\cdots\leq\lambda_m$ be eigenvalues of $\sigma$ with respect to a fixed Hermitian metric $\omega_Y$ on $Y$.
    Since $\sigma^l\neq 0$ everywhere, it follows that $\lambda_{m-l+1}>0$.

    For any sublevel set $X_c=\{\Phi<c\}$ defined by a smooth plurisubharmonic exhaustion function $\Phi$,
    Theorem \ref{thm:equisingular} yields a family of singular Hermitian metrics $\{h_i\}$ on $X_c$ such that
    \begin{itemize}
        \item[$\bullet$] $h_i$ is smooth on $X_c\setminus Z_i$ for some proper subvariety $Z_i$,
        \item[$\bullet$] $\{h_i\}$ is an increasing sequence and $\lim_{i\to\infty}h_i=h,$
        \item[$\bullet$] $\image\Theta_{L,h_i}\geq \pi^\ast\sigma-\delta_i\omega_X$ where $\lim_{i\to\infty}\delta_i=0$.
    \end{itemize}

    Note that $b=\sum_{i_0,\cdots,i_p}\bar\partial \pi^*\rho_{i_0}\wedge\cdots\wedge\bar\partial \pi^*\rho_{i_{p-1}}\wedge\mathcal{H}(\alpha_{i_0\ldots i_p})$ by formula~(\ref{formula:representative}).

    \textbf{Step 2: Solve $\bar\partial$ equations.}

    Consider operators $B_1:=[\pi^*\sigma,\Lambda_{\omega_X}]\geq 0$ and $B_{2,i}:=[\image\Theta_{L,h_i},\Lambda_{\omega_X}]$.
    According to Lemma~\ref{lemma:computation}, near some given point $x\in X$, we have
    \begin{align*}
        \langle B_1^{-1}b,b \rangle_{h_i,\omega_X}dV_{\omega_X} \leq & \sum_{i_0,\cdots,i_p} C
        \frac{1}{\lambda_1+\cdots+\lambda_p}|\bar\partial \rho_{i_0}\wedge\cdots\wedge\bar\partial \rho_{i_{p-1}}|^2_{\omega_Y} \\
        &\cdot |\mathcal{H}(\alpha_{i_0\ldots i_p})|^2_{h,\omega_X} dV_{\omega_X}
    \end{align*}
    for some constant $C>0$.
    Thus for any $K\Subset X_c$, there exists a constant $C(K)>0$ such that
    \begin{align*}
        \langle B_1^{-1}b,b \rangle_{h_i,\omega_X}dV_{\omega_X} \leq & \sum_{i_0,\cdots,i_p}
        C(K) \frac{1}{\lambda_1+\cdots+\lambda_p}|\bar\partial \rho_{i_0}\wedge\cdots\wedge\bar\partial \rho_{i_{p-1}}|^2_{\omega_Y} \\
        &\cdot |\mathcal{H}(\alpha_{i_0\ldots i_p})|^2_{h,\omega_X} dV_{\omega_X}.
    \end{align*}
    Therefore, by suitably choosing a smooth convex increasing function $\chi$ and replacing $h$ and $h_i$ with $\widetilde{h} := he^{-\chi \circ \Phi}$ and $\widetilde{h}_i := h_i e^{-\chi \circ \Phi}$ respectively,
    we may assume that
    $$\int_{X_c}\langle B_1^{-1}b,b \rangle_{\widetilde{h}_i,\omega_X}dV_{\omega_X} \leq C<+\infty,$$
    where $C>0$ is a constant independent of $c$ and $i$.

    Note that $B_{2,i}+\delta_i[\omega_X,\Lambda_{\omega_X}]\geq B_1\geq 0$. We have
    $$\int_{X_c}\langle (B_{2,i}+\delta_i(p+q)\Id_L)^{-1}b,b \rangle_{\widetilde{h}_i,\omega_X}dV_{\omega_X}\leq \int_{X_c}\langle B_1^{-1}b,b \rangle_{\widetilde{h}_i,\omega_X}dV_{\omega_X} \leq C<+\infty.$$
    Then by Lemma \ref{lemma:solve equation}, we can find an approximate solution $u_{c,i}$ and a correcting term $v_{c,i}$ such that
    \begin{align*}
        u_{c,i}\in L^{n,p+q-1}_{(2)}(X_c\setminus Z_i,L)_{\widetilde{h}_i,\omega_X}, \
        v_{c,i}\in L^{n,p+q}_{(2)}(X_c\setminus Z_i,L)_{\widetilde{h}_i,\omega_X},
    \end{align*}
    and satisfy the equality
    $$\bar\partial u_{c,i}+\sqrt{\delta_i(p+q)}v_{c,i}=b \text{ on } X_c\setminus Z_i$$
    and the $L^2$ estimate
    $$\|u_{c,i}\|^2_{\widetilde{h}_i,\omega_X}+\|v_{c,i}\|^2_{\widetilde{h}_i,\omega_X}\leq C.$$
    Note that Lemma \ref{lemma:extension} shows that the equation can be extended to $X_c$.
    Taking a subsequence if necessary, we obtain weak limits $u_c=\lim_{i\to\infty}u_{c,i}$ and $v_c=\lim_{i\to\infty}v_{c,i}$
    such that
    $$\bar\partial u_c=b \text{ on } X_c$$
    and
    $$\|u_c\|^2_{\widetilde{h},\omega_X}+\|v_c\|^2_{\widetilde{h},\omega_X}\leq C.$$
    We thus obtain solutions $u_c$ on $X_c$ with uniform $L^2$ bound.
    Finally, by extracting a weakly convergent subsequence, we obtain $u$ satisfying $$\bar\partial u=b \text{ on } X$$
    and $$\|u\|^2_{\tilde{h},\omega_X}\leq C.$$
    This completes the proof.
\end{proof}

\section{Kawamata-Viehweg type Theorem} \label{Sec:Kawamata-Viehweg}

In this section, we prove our Kawamata-Viehweg-Koll\'ar-Nadel type vanishing theorem on compact K\"ahler manifolds.

We first give an approximation lemma with strong openness applicable to our situation by Lemma~\ref{lemma:regularization1}.

\begin{lemma} \label{lemma:regularization2}
    Let $\pi:(X,\omega_X)\rightarrow (Y,\omega_Y)$ be a holomorphic surjective map from a compact K\"ahler manifold $X$ to a compact K\"ahler manifold $Y$ of dimension $m$.
    Let $T=\alpha+\ddbar\varphi$ be a closed positive $(1,1)$-current on $Y$, and let $p\geq m-\nd(T)+1$.
    Let $\{\varphi_k\}_{k=1}^\infty$ be the sequence of quasi-plurisubharmonic functions constructed in Lemma~\ref{lemma:regularization1}.
    Then besides properties (1)--(4) in Lemma~\ref{lemma:regularization1}, we also have that
    for any quasi-plurisubharmonic function $\psi'$ on $X$,
    $$\mathcal{I}(\psi'+\pi^*\varphi_k)=\mathcal{I}(\psi'+\pi^*\varphi)$$
    for $k$ large enough.

\end{lemma}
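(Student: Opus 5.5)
We prove the two inclusions separately. The inclusion $\mathcal{I}(\psi'+\pi^*\varphi_k)\subset\mathcal{I}(\psi'+\pi^*\varphi)$ should come from comparing singularities. By the construction recalled in the Remark after Lemma~\ref{lemma:regularization1}, $\varphi_k$ is more singular than $(1+\frac{2}{k})\widetilde{\varphi}_k$. Also, $\widetilde{\varphi}_k$ is less singular than $\varphi$, and the functions are normalized so that $\varphi_k\leq 0$.

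We would rather prove directly that $e^{-\pi^*\varphi}$ is controlled by $e^{-\pi^*\varphi_k}$ up to the strong openness loss. It is cleaner to work with integrability, as follows.

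\textbf{Step 1 (reduction to a Hölder estimate).} Fix $x\in X$ and a germ $f\in\mathcal{O}_{X,x}$. Suppose $\int_U|f|^2e^{-\psi'-\pi^*\varphi_k}<\infty$ near $x$. We want $f\in\mathcal{I}(\psi'+\pi^*\varphi)$. By the strong openness property (Theorem~\ref{thm:strong openness}, applied to the quasi-psh function $\psi'+\pi^*\varphi$), it suffices to show instead $f\in\mathcal{I}((1-\eta)(\psi'+\pi^*\varphi))$ for some small $\eta$...

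That direction is the wrong way round, so I would argue as follows. Since $\varphi\leq\widetilde\varphi_k+C$ and the $\varphi_k$ satisfy $\mathcal{I}(\varphi_k)=\mathcal{I}(\varphi)$, the natural comparison is
\[
e^{-\pi^*\varphi}\lesssim e^{-(1+s)\pi^*\varphi_k}\cdot(\text{integrable weight}),
\]
and this does not hold pointwise.

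So instead, for the inclusion ``$\supset$'' the strategy is the following. Take $f\in\mathcal{I}(\psi'+\pi^*\varphi)_x$. By strong openness there is $s_2>0$ with $f\in\mathcal{I}((1+s_2)(\psi'+\pi^*\varphi))$. Then apply Hölder's inequality as in Lemma~\ref{lemma:regularization1}(4):
\[
\int|f|^2e^{-\psi'-\pi^*\varphi_k}\leq\Big(\int|f|^2e^{-(1+s_2)(\psi'+\pi^*\varphi)}\Big)^{\frac1{1+s_2}}\Big(\int|f|^2e^{-\frac{1+s_2}{s_2}(\psi'+\pi^*\varphi_k)-\frac{1+s_2}{s_2}\cdot(\ldots)}\Big)^{\frac{s_2}{1+s_2}}.
\]
More precisely, write
\[
\varphi_k=(1+\tfrac2k-s)\widetilde\varphi_k+s\,\Psi_k,\qquad \Psi_k:=\widetilde\varphi_k+\psi_{k}+\delta_k\log|E_k|.
\]
Use $\widetilde\varphi_k\geq\varphi-C$ to absorb the $\widetilde\varphi_k$ part into $e^{-(1+2/k-s)\pi^*\varphi}$. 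The remaining factor is then $e^{-s'\pi^*\Psi_k}$ with $s'$ small. Its local integrability on $X$ (uniform in $k$) follows from Skoda's uniform integrability theorem on $X$, applied to the pulled back functions $\pi^*\Psi_k$. These are quasi-psh on $X$ with uniformly bounded Lelong numbers, since $\Psi_k+\psi$-normalizations give uniform Lelong bounds on $Y$ and pullback multiplies Lelong numbers by at most a constant depending on $\pi$. This requires choosing $s$ small in terms of $s_2$, which is determined only after $\psi'$ is fixed. That is acceptable, since $s$ only enters the construction through an upper bound and $\varphi_k$ is built for each $k$.

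\textbf{Step 2 (inclusion ``$\subset$'').} Since $\varphi_k$ is more singular than $(1+\frac2k)\widetilde\varphi_k$, we get $\mathcal{I}(\psi'+\pi^*\varphi_k)\subset\mathcal{I}(\psi'+(1+\frac2k)\pi^*\widetilde\varphi_k)$. By the quasi-equisingular property (iii), $\widetilde\varphi_k$ converges to $\varphi$ with $\int e^{-t\varphi}-e^{-t\max(\varphi,\widetilde\varphi_k)}\to0$. Because $\mathcal{I}(\psi'+\pi^*\varphi)$ is coherent and $X$ is compact, the increasing chain $\mathcal{I}(\psi'+(1+\frac2k)\pi^*\widetilde\varphi_k)$ is decreasing in $k$ in the sense of the comparison $\widetilde\varphi_{k+1}\le\widetilde\varphi_k+C$. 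We would show it stabilizes at $\mathcal{I}(\psi'+\pi^*\varphi)$ via the argument of \cite[Theorem 2.3 (3)]{DPS01}, transported by $\pi$: namely $\mathcal{I}(\psi'+(1+\delta)\pi^*\widetilde\varphi_k)\subset\mathcal{I}(\psi'+\pi^*\varphi)$ for $k\geq k_0(\delta)$. The Noetherian property gives stabilization.

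\textbf{Main obstacle.} The hard step is this last containment, since the equisingular comparison is stated on $Y$ without the extra weight $\psi'$ and without pullback. I would handle it by the Bergman-kernel/Ohsawa–Takegoshi construction of $\widetilde\varphi_k$: locally $\widetilde\varphi_k$ is $\frac1{2^k}\log\sum|\sigma_{j}|^2$ with $\sigma_j$ a basis of weighted $L^2$ holomorphic functions for $2^k\varphi$. The Demailly comparison $\varphi\le\widetilde\varphi_k+C$ and the reverse control $\widetilde\varphi_k\le\sup\varphi+C/2^k$ both pull back under $\pi$. The combination $\mathcal{I}(\psi'+(1+\delta)\pi^*\widetilde\varphi_k)\subset\mathcal{I}(\psi'+\pi^*\varphi)$ then follows by the same Hölder-plus-strong-openness trick as in Step 1, with $\psi'$ carried along. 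This trick uses strong openness for $\psi'+\pi^*\varphi$ on $X$ and the integrability property (4) of the quasi-equisingular approximation.
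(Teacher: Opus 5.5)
Your argument for $\mathcal{I}(\psi'+\pi^*\varphi)\subset\mathcal{I}(\psi'+\pi^*\varphi_k)$ is the paper's: H\"older, strong openness for $\psi'+\pi^*\varphi$, and Skoda's uniform integrability for the pulled-back functions. You are right that it makes $s$ depend on $\psi'$. The paper passes over this, and it is harmless for Theorem~\ref{MainThm1'}, where $\psi'$ is fixed before the $\varphi_k$ are built.

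The genuine gap is in the reverse inclusion. Your reduction to $\mathcal{I}(\psi'+(1+\delta)\pi^*\widetilde\varphi_k)\subset\mathcal{I}(\psi'+\pi^*\varphi)$ is fine; $\delta=0$ suffices, since $\varphi_k$ is more singular than $\widetilde\varphi_k$. However, nothing you offer proves this containment.

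\begin{itemize}
\item Ideal containments on $Y$, such as property (3) of the quasi-equisingular approximation, do not pull back under $\pi$, even without the extra weight $\psi'$. On $\mathbb{C}^2$ one has $\mathcal{I}(\log|z|)=\mathcal{I}(2\log(|z|+|w|))=\mathcal{O}$. After blowing up the origin, the first pull-back still has trivial multiplier ideal, while the second has $\mathcal{O}(-E)$.
\item The pointwise bounds you quote do pull back, but they are useless here. The bound $\varphi\leq\widetilde\varphi_k+C$ points the wrong way. The bound $\widetilde\varphi_k\leq\sup_B\varphi+C2^{-k}$ gives no control of $e^{-\pi^*\varphi}$ by $e^{-\pi^*\widetilde\varphi_k}$.
\item The only usable information is the integral property (ii), which lives on $Y$, and you never say how to transfer it to $X$.
\item The strong openness needed in this direction is for $\psi'$ alone. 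Membership in $\mathcal{I}(\psi'+\pi^*\varphi)$ is what you are proving, so its openness is not available.
\item Noetherianity does not make a decreasing chain of ideals stabilize, although you do not actually need stabilization.
\end{itemize}

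The missing mechanism is the paper's, in four steps.

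\begin{enumerate}
\item Take $f\in\mathcal{I}(\psi'+\pi^*\widetilde\varphi_k)$ and split $e^{-\pi^*\varphi}\leq e^{-\pi^*\widetilde\varphi_k}+\pi^*\big(e^{-\varphi}-e^{-\max\{\varphi,\widetilde\varphi_k\}}\big)$.
\item Apply H\"older to the second term. One factor is $|f|^2e^{-\psi'}\in L^{p}$ for a fixed $p>1$. This comes from strong openness for $\psi'$, since $f\in\mathcal{I}(\psi')$ because $\widetilde\varphi_k$ is bounded above. The other factor is $\pi^*g_k$, where $g_k:=\big(e^{-\varphi}-e^{-\max\{\varphi,\widetilde\varphi_k\}}\big)^{q}$.
\item By $(a-b)^q\leq a^q-b^q$ and (ii), $g_k\in L^t(Y)$ for any given $t$ once $k$ is large.
\item Transfer to $X$ by the coarea formula: $\int_X\pi^*g_k\,dV_{\omega_X}=\int_Y g_k\,h\,dV_{\omega_Y}$, with $h(y)=\int_{X_y}J_\pi^{-1}\,dV_{X_y}$. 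Since $\log J_\pi$ is locally psh, $J_\pi^{-\varepsilon}\in L^1(X)$ for small $\varepsilon>0$. The fibre volume is constant, so $h\in L^{1+\varepsilon}(Y)$, and one more H\"older inequality gives $\pi^*g_k\in L^1(X)$.
\end{enumerate}

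This control of the push-forward density of $dV_{\omega_X}$ is the ingredient your proposal lacks.
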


\begin{proof}
    From the proof of Lemma~\ref{lemma:regularization1} (see \cite[Lemma 5.9, formula (5.27)]{Cao14}), one can see that
    $\varphi_k=\widetilde{\varphi}_k+\Phi_k$ where $\widetilde{\varphi}_k$ is the quasi-equisingular approximation of $\varphi$ mentioned after Theorem~\ref{thm:equisingular} and $\Phi_k$ is a function on $Y$.
    Here $\{\widetilde{\varphi}_k\}$ satisfies two properties:
    \begin{itemize}
        \item[(i)] $\varphi_k$ is more singular than $\widetilde{\varphi}_k$ for every $k$,
        \item[(ii)] for every $t>0$, $\int_Y e^{-t\varphi}-e^{-t\max\{\varphi,\widetilde{\varphi}_k\}}dV_{\omega_Y}$ is finite for $k$ large enough and converges to $0$ as $k\to +\infty$.
    \end{itemize}

    Note that we can replace $\varphi_k$ in the proof of Lemma~\ref{lemma:regularization1} (4)
    (that is, $\widehat{\varphi}_k$ in \cite[Lemma 5.10]{Cao14}) by $\psi'+\pi^*\varphi_k$,
    since we can also apply Skoda's uniform integrability theorem.
    Therefore, there exists a constant $s_1>0$ such that for $k$ large enough,
    $$\int_U |f|^2e^{-(\psi'+\pi^*\varphi_k)}dV_{\omega_X}\leq C_{\|f\|_{L^\infty}}\cdot \left(\int_U |f|^2e^{-(1+s_1)(\psi'+\pi^*\varphi)}dV_{\omega_X}\right)^{1/(1+s_1)}<+\infty,$$
    where $f$ is a local section of $\mathcal{I}(\psi'+\pi^*\varphi)$ over an open subset $V\subset X$ and $U\Subset V$ is any relatively compact open subset.
    Here we use the strong openness property (Theorem~\ref{thm:strong openness}).
    In particular, $$\mathcal{I}(\psi'+\pi^*\varphi)\subset\mathcal{I}(\psi'+\pi^*\varphi_k).$$

    Conversely, for a local section $f$ of $\mathcal{I}(\psi'+\pi^*\widetilde{\varphi}_k)$ over an open subset $V\subset X$ and any relatively compact open subset $U\Subset V$,
    we have
    \begin{align*}
        \int_U |f|^2e^{-(\psi'+\pi^*\varphi)}&=\int_U |f|^2e^{-(\psi'+\pi^*\widetilde{\varphi}_k)}
        +\int_U |f|^2e^{-\psi'}(e^{-\pi^*\varphi}-e^{-\pi^*\widetilde{\varphi}_k})  \\
        &\leq \int_U |f|^2e^{-(\psi'+\pi^*\widetilde{\varphi}_k)}
        +\int_U |f|^2e^{-\psi'}(e^{-\pi^*\varphi}-e^{-\pi^*\max\{\varphi,\widetilde{\varphi}_k\}}).
    \end{align*}
    For the second term in the right-hand side, by H\"older's inequality, we have
    \begin{align*}
        &\int_U |f|^2 e^{-\psi'}(e^{-\pi^*\varphi}-e^{-\pi^*\max\{\varphi,\widetilde{\varphi}_k\}})   \\
        &\leq \left(\int_U |f|^{2p} e^{-p\psi'}\right)^{1/p}
        \cdot \left(\int_U  (e^{-\pi^*\varphi}-e^{-\pi^*\max\{\varphi,\widetilde{\varphi}_k\}})^q\right)^{1/q},
    \end{align*}
    where $p$ and $q$ are positive constants satisfying $\mathcal{I}(p\psi')=\mathcal{I}(\psi')$ by Theorem \ref{thm:strong openness} and $1/p+1/q=1$.

    We now show that, for this fixed $q$ (independent of $f$ and $k$),
    $$\int_U  (e^{-\pi^*\varphi}-e^{-\pi^*\max\{\varphi,\widetilde{\varphi}_k\}})^q<+\infty$$
    for $k$ large enough.

    Denote $(e^{-\varphi}-e^{-\max\{\varphi,\widetilde{\varphi}_k\}})^q$ by $g_k$ for simplicity.
    Then for any $t>0$,
    $$g_k^t\leq e^{-qt\varphi}-e^{-qt\max\{\varphi,\widetilde{\varphi}_k\}}$$
    by the basic inequality $(a-b)^q\leq a^q-b^q$ for $a>b>0$.
    By property (ii) mentioned above, we know that
    $$\int_Y g_k^t<+\infty \quad \text{for } k \text{ large enough}.$$

    By the well-known coarea formula (see \cite[Theorem~3.2.12]{Fed69}, \cite[Corollary~2.2]{Nic}),
    we know that for any measurable function $g$ on $X$ satisfying $g\geq 0$, we have the equality
    $$\int_X g dV_{\omega_X}=\int_Y\int_{X_y}\frac{g}{J_\pi} dV_{X_y} dV_{\omega_Y},$$
    where $J_\pi$ is square of the determinant of the Jacobi of $\pi$, $X_y=\pi^{-1}(y)$ and $dV_{X_y}$ is the volume form on $X_y$ induced by $\omega_X$.
    It is enough to consider $y\in Y$ such that $y$ is a regular point of $\pi$.
    Note that $J_\pi=e^{\Psi}$ for some plurisubharmonic function $\Psi$ locally.
    There exists a constant $\varepsilon>0$ such that
    $$\int_Y\int_{X_y}\frac{1}{J_\pi^{1+\varepsilon}} dV_{X_y} dV_{\omega_Y}=\int_X \frac{1}{J_\pi^{\varepsilon}}dV_{\omega_X}<+\infty.$$
    Denote $\int_{X_y}\frac{1}{J_\pi} dV_{X_y}$ by $h(y)$.
    Then by H\"older's inequality, we have
    $$h(y)=\int_{X_y}\frac{1}{J_\pi} dV_{X_y}\leq
    \left(\int_{X_y}\frac{1}{J_\pi^{1+\varepsilon}} dV_{X_y}\right)^{\frac{1}{1+\varepsilon}}\cdot
    \left(\int_{X_y} 1 dV_{X_y}\right)^{\frac{\varepsilon}{1+\varepsilon}}.$$
    Note that $\int_{X_y} 1 dV_{X_y}=\int_X \{[\pi^{-1}(y)]\}\wedge \frac{[\omega_X]^{n-m}}{(n-m)!}$ is a constant independent of $y$,
    since the class of the current $\{[\pi^{-1}(y)]\}\in H^{m,m}(X,\mathbb{Z})$ given by analytic subset $\pi^{-1}(y)$ is independent of $y$.
    Denote this constant by $C$.
    Then
    $$\int_Y h^{1+\varepsilon}dV_{\omega_Y}\leq
    C^\varepsilon\int_Y\int_{X_y}\frac{1}{J_\pi^{1+\varepsilon}} dV_{X_y} dV_{\omega_Y}<+\infty.$$
    By coarea formula and H\"older's inequality, we obtain
    \begin{align*}
        \int_X \pi^*g_k dV_{\omega_X}=\int_Y g_k h dV_{\omega_Y}\leq
        \left(\int_Yg_k^{\frac{1+\varepsilon}{\varepsilon}} dV_{\omega_Y} \right)^{\frac{\varepsilon}{1+\varepsilon}}\cdot
        \left(\int_Yh^{1+\varepsilon} dV_{\omega_Y} \right)^{\frac{1}{1+\varepsilon}}.
    \end{align*}
    It follows from the argument above that $\int_X \pi^*g_k dV_{\omega_X}<+\infty$ for $k$ large enough.

    Therefore, we obtain that $ f\in \mathcal{I}(\psi'+\pi^*\varphi)$ for $k$ large enough.
    Since $\varphi_k$ is more singular than $\widetilde{\varphi}_k$, we have
    $$\mathcal{I}(\psi'+\pi^\ast\varphi_k)\subset\mathcal{I}(\psi'+\pi^\ast\widetilde{\varphi}_k)\subset\mathcal{I}(\psi'+\pi^\ast\varphi).$$

    This completes the proof.
\end{proof}
We can now prove our Theorem~\ref{MainThm1} and Theorem~\ref{MainThm4}.
\begin{theorem}[=Theorem \ref{MainThm1}] \label{MainThm1'}
    Let $\pi:X\rightarrow Y$ be a holomorphic surjection from a compact K\"ahler manifold $X$ of dimension $n$ to a compact K\"ahler manifold $Y$ of dimension $m$.
    Let $L$ be a holomorphic Hermitian line bundle over $X$ equipped with a singular Hermitian metric $h$ and $T$ a closed positive $(1,1)$-current on $Y$.
    We assume that $\image\Theta_{L,h}\geq \pi^*T$. Then
    $$H^p(Y,R^q\pi_*(K_X\otimes L\otimes\mathcal{I}(h)))=0 \quad \text{for any } p\geq m-\nd(T)+1 \text{ and } q\geq0. $$
\end{theorem}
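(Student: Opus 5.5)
We follow the proof of Theorem~\ref{MainThm2'}, replacing the smooth form $\pi^*\sigma$ by the regularized currents from Lemma~\ref{lemma:regularization1} and Lemma~\ref{lemma:regularization2}. Since $X$ is compact it is holomorphically convex, and by Theorem~\ref{thm:construction of representatives} it suffices to show that the injective map $\varphi_{p,q}$ is zero. So we fix a cocycle $\{\alpha_{i_0\ldots i_p}\}$, form $b=\sum\bar\partial\pi^*\rho_{i_0}\wedge\cdots\wedge\bar\partial\pi^*\rho_{i_{p-1}}\wedge\mathcal{H}(\alpha_{i_0\ldots i_p})$ via (\ref{formula:representative}), and aim to solve $\bar\partial u=b$ with $u\in L^{n,p+q-1}_{(2)}(X,L)_h$.

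\textbf{Splitting the metric.} Write $T=\alpha+\ddbar\varphi$. Since $\image\Theta_{L,h}\geq\pi^*T$, the weight $\psi':=\varphi_L-\pi^*\varphi$ (with $h=e^{-\varphi_L}$ locally) satisfies $\image\Theta_{L,h}=\pi^*\alpha+\ddbar\pi^*\varphi+(\text{closed positive current})$. Hence $h=h'e^{-\pi^*\varphi}$, where $h'$ is a singular metric on $L-\pi^*(\text{class of }\alpha)$ with nonnegative curvature current. Let $\varphi_k$ be as in Lemma~\ref{lemma:regularization1} and set $h_k:=h'e^{-\pi^*\varphi_k}$. By Lemma~\ref{lemma:regularization2}, applied with $\psi'$ the weight of $h'$, we have $\mathcal{I}(h_k)=\mathcal{I}(h)$ for $k\gg1$. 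Therefore $b$ still represents the same class with respect to $h_k$, and the harmonic representatives $\mathcal{H}(\alpha)$ have finite $h_k$-norm on compact subsets. To make this precise we use the uniform estimate in property (4), transported to $X$ as in the proof of Lemma~\ref{lemma:regularization2}. We then apply Theorem~\ref{thm:equisingular} to $h'$ as well, so that $h'$ is smooth off an analytic set with loss $\delta_i\omega_X$. Lemmas~\ref{lemma:complete metric} and \ref{lemma:extension} then let us work on a complete Kähler open set.

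\textbf{Curvature estimate.} The curvature of $h_k$ satisfies $\image\Theta\geq\pi^*(\alpha+\ddbar\varphi_k)-\delta_i\omega_X$. By property (2), $\pi^*(\alpha+\ddbar\varphi_k)+(2\varepsilon_k)\pi^*\omega_Y\geq\pi^*\sigma_k$, where $\sigma_k\geq0$ has eigenvalues $\lambda_{j,k}+2\varepsilon_k$ roughly, with $\lambda_{p,k}+2\varepsilon_k\geq\varepsilon_k^\gamma$ off $U_k$ by property (3). We apply Lemma~\ref{lemma:solve equation} with $B=[\image\Theta_{L,h_k},\Lambda]$ and a correcting constant $\delta\sim\varepsilon_k$ (absorbing the $2\varepsilon_k\pi^*\omega_Y$ term, which costs at most $C\varepsilon_k$ in $[\cdot,\Lambda]$ on $(n,p+q)$-forms). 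Lemma~\ref{lemma:computation} then bounds the integrand pointwise by $C(\lambda_{1}+\cdots+\lambda_p)^{-1}|\mathcal{H}(\alpha)|^2_{h_k}$. On $X\setminus\pi^{-1}(U_k)$ this is at most $C\varepsilon_k^{-\gamma}$ times a bounded quantity. On $\pi^{-1}(U_k)$ we only get the crude bound $C\varepsilon_k^{-1}$, multiplied by $\int_{\pi^{-1}(U_k)}|\mathcal{H}(\alpha)|^2_{h_k}$. We control this integral by Hölder's inequality, the uniform $L^{2(1+s_1)}$-type bound of property (4), and the coarea argument of Lemma~\ref{lemma:regularization2}, which converts $\Vol(U_k)\leq\varepsilon_k^\beta$ into $\Vol(\pi^{-1}(U_k))\leq C\varepsilon_k^{\beta'}$. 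The output is $u_k,v_k$ with $\bar\partial u_k+\sqrt{C\varepsilon_k}v_k=b$ and
$$\|u_k\|^2+\|v_k\|^2\leq C\varepsilon_k^{-\gamma}+C\varepsilon_k^{-1+\beta''}.$$
Consequently $\|\sqrt{\varepsilon_k}v_k\|^2\leq C(\varepsilon_k^{1-\gamma}+\varepsilon_k^{\beta''})\to0$.

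\textbf{Conclusion and main obstacle.} Since $b$ lies in the closure of $\operatorname{Im}\bar\partial$ in $L^2$ for the fixed reference metric $h_{k_0}$ (note $h_k\geq h_{k_0}$ up to constants, because $\varphi_k\leq0$ and the $\varphi_k$ become more singular), and since $X$ is compact so that $\bar\partial$ has closed range on the $L^2$ cohomology computing $H^{p+q}(X,K_X\otimes L\otimes\mathcal{I}(h))$ (via the Hodge decomposition of Section~2), it follows that $[b]=0$. An alternative is to show that the harmonic part of $b$ is orthogonal to everything in the limit. The main obstacle is the mass concentration estimate on $\pi^{-1}(U_k)$: we need uniform control of $\int_{\pi^{-1}(U_k)}|\mathcal{H}(\alpha)|^2_{h_k}$ that beats $\varepsilon_k^{-1}$. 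This requires combining the strong openness exponent $s_1$, Skoda-type uniform integrability for $\pi^*\varphi_k$, and the coarea transfer of the small-volume bound, exactly as in Lemma~\ref{lemma:regularization2}. If the exponents do not match, the first thing to try is to shrink $\gamma$ and adjust $s$ in the construction of $\varphi_k$.
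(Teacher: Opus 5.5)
Up to your last paragraph your argument is the paper's. It uses the same splitting $h=h'e^{-\pi^*\varphi}$, $h_k=h'e^{-\pi^*\varphi_k}$ with $\mathcal{I}(h_k)=\mathcal{I}(h)$ by Lemma~\ref{lemma:regularization2}, and the same shift of $\alpha+\ddbar\varphi_k$ by a multiple of $\varepsilon_k\omega_Y$, absorbed into the correcting term of Lemma~\ref{lemma:solve equation}. It also uses the pointwise bound of Lemma~\ref{lemma:computation} and the split over $\pi^{-1}(U_k)$ and its complement.

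What you call the main obstacle is not one. The factor $\varepsilon_k^{-1}$ on $\pi^{-1}(U_k)$ is cancelled by the $\varepsilon_k$ in front of $v_k$. So you only need $\int_{\pi^{-1}(U_k)}|\mathcal{H}(\alpha_{i_0\ldots i_p})|^2_{h_k}\to 0$. This follows from the bound in property (4) by $(\int_{\pi^{-1}(U_k)}F)^{1/(1+s_1)}$ with a fixed $F\in L^1$, together with $\Vol(\pi^{-1}(U_k))\to 0$. No rate or matching of exponents is needed.

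\textbf{The gap is in the conclusion.} You have $b=\bar\partial u_k+w_k$ with $\|w_k\|_{h_k}\to 0$, but the norms vary with $k$. To use closed range of $\bar\partial$ in one Hilbert space you need a fixed $h_0$ with $\mathcal{I}(h_0)=\mathcal{I}(h)$ and $h_k\geq c\,h_0$, with $c$ independent of $k$.

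Your choice $h_0=h_{k_0}$ rests on the claim that the $\varphi_k$ become more singular. Lemma~\ref{lemma:regularization1} does not assert this, and it fails for the actual construction. There $\varphi_k=(1+\frac{2}{k})\widetilde{\varphi}_k+s\psi_{k,\varepsilon_k,\delta_k}+s\delta_k\log|E_k|$, with $\psi_{k,\varepsilon_k,\delta_k}$ smooth. So when $\varphi$ has, say, divisorial singularities, the Lelong numbers of $\varphi_k$ tend to those of $\varphi$ from above. Hence $\nu(\varphi_k)<\nu(\varphi_{k_0})$ for $k\gg k_0$, and $\varphi_k\leq\varphi_{k_0}+C$ cannot hold.

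The only uniform comparison you have is $\varphi_k\leq 0$, i.e.\ $h_k\geq h'$. But $\mathcal{I}(h')=\mathcal{I}(\psi')$ is in general strictly larger than $\mathcal{I}(h)$. Closed range in $L^2(h')$ therefore only kills $[b]$ in $H^{p+q}(X,K_X\otimes L\otimes\mathcal{I}(\psi'))$, which is the wrong group.

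\textbf{How the paper closes this step.} It uses the Hausdorff argument of \cite[Lemma 5.8]{Cao14}, which needs no fixed $L^2$ norm:
\begin{enumerate}
\item On a fixed finite Stein cover of $X$, convert $w_k$ into a \v{C}ech cocycle by solving $\bar\partial$ locally with H\"ormander's estimates for the weights of $h_k$.
\item The constants are uniform in $k$, because these weights have Levi forms uniformly bounded below and are uniformly bounded above ($\varphi_k\leq 0$, and $\psi'$ is quasi-psh on compact $X$).
\item The resulting holomorphic cochains take values in $K_X\otimes L\otimes\mathcal{I}(h_k)=K_X\otimes L\otimes\mathcal{I}(h)$. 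Since the weights are bounded above, the cochains tend to $0$ uniformly on compact sets.
\item $H^{p+q}(X,K_X\otimes L\otimes\mathcal{I}(h))$ is finite-dimensional, so coboundaries form a closed subspace of the Fr\'echet space of cocycles. Hence the fixed class $[b]=[w_k]$ vanishes.
\end{enumerate}
This uses exactly $\varphi_k\leq 0$ and $\mathcal{I}(h_k)=\mathcal{I}(h)$ (property (1) and Lemma~\ref{lemma:regularization2}), and no monotonicity in $k$. Replacing your final paragraph with this argument repairs the proof.
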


\begin{proof}
    We use the notation in Theorem \ref{thm:construction of representatives}. It is enough to show that the map $\varphi_{p,q}$ is actually the zero map.
    We will prove that $[b]=0\in H^{p+q}(X,K_X\otimes L\otimes \mathcal{I}(h))$ by the Hausdorff property of the cohomology group.
    In general, one cannot directly find a solution $u$ satisfying $\bar\partial u=b$.
    Let $\omega_X$ and $\omega_Y$ be K\"ahler metrics on $X$ and $Y$ respectively, such that $\omega_X\geq \pi^*\omega_Y$.

    \textbf{Step 1: Basic construction and setting.}

    Write $h = h_\infty e^{-\psi}$, where $h_\infty$ is a smooth Hermitian metric on $L$ and $\psi$ is a quasi-plurisubharmonic function.
    Write $T=\alpha+\ddbar\varphi$.
    Then we have
    $$\image\Theta_{L,h}=\image\Theta_{L,h_\infty}+\ddbar\psi\geq \pi^*T=\pi^*(\alpha+\ddbar\varphi).$$
    Hence
    $$\image\Theta_{L,h_\infty}+\ddbar(\psi-\pi^*\varphi)\geq \pi^*\alpha.$$
    This implies that $\psi':=\psi-\pi^*\varphi$ is a quasi-plurisubharmonic function on $X$.
    Let $\{\varphi_k\}^\infty_{k=1}$ be the sequence of quasi-plurisubharmonic function on $Y$ obtained in Lemma \ref{lemma:regularization2}.
    Consequently,
    $$\image\Theta_{L,h_\infty}+\ddbar(\psi-\pi^*\varphi+\pi^*\varphi_k)\geq \pi^*(\alpha+\ddbar\varphi_k).$$
    Define new metrics on $L$ by $h_k:=h_\infty e^{-(\psi'+\pi^*\varphi_k)}$.
    By Lemma \ref{lemma:regularization2}, we have $\mathcal{I}(h_k)=\mathcal{I}(h)$.

    Applying Theorem~\ref{thm:equisingular} to $\psi'$, we obtain approximations $\psi_i'$ such that
    \begin{itemize}
        \item[$\bullet$] $\psi_i'$ is smooth on $X\setminus Z_i$ for some proper subvariety $Z_i$;
        \item[$\bullet$] $\{\psi_i'\}$ is a decreasing sequence and $\lim_{i\to\infty}\psi'_i=\psi$;
        \item[$\bullet$] $\image\Theta_{L,h_\infty}+\ddbar\psi_i'\geq \pi^\ast\alpha-\delta_i\omega_X$ where $\lim_{i\to\infty}\delta_i=0$.
    \end{itemize}
    Therefore we obtain new metrics $h_{k,i}:=h_\infty e^{-(\psi_i'+\pi^*\varphi_k)}$ satisfying $h_{k,i}\leq h_k$ and
    $$\image\Theta_{L,h_{k,i}}=\image\Theta_{L,h_\infty}+\ddbar\psi_i'+\pi^*\ddbar\varphi_k\geq \pi^\ast(\alpha+\ddbar\varphi_k)-\delta_i\omega_X.$$

    We now adapt the construction from the proof of Theorem~\ref{MainThm2'}.

    Following the notation in Lemma \ref{lemma:regularization2}, let  $\lambda_{1,k}\leq\cdots\leq\lambda_{m,k}$ denote the eigenvalues of $\alpha+\ddbar\varphi_k$ with respect to $\omega_Y$.
    Then $\lambda_{1,k}\geq -\varepsilon_k-\frac{C}{k}-\tau_k$ on $Y$.
    Define $$\widetilde{\lambda}_{j,k}=\lambda_{j,k}+4\varepsilon_k>0 \quad \text{for } j=1,\cdots,m.$$

    Note that $b=\sum_{i_0,\cdots,i_p}\bar\partial \pi^*\rho_{i_0}\wedge\cdots\wedge\bar\partial \pi^*\rho_{i_{p-1}}\wedge\mathcal{H}(\alpha_{i_0\ldots i_p})$ by formula (\ref{formula:representative}).

    \textbf{Step 2: Solve $\bar\partial$ equations.}

    Consider operators $B_{1,k}:=[\pi^*(\alpha+\ddbar\varphi_k+4\varepsilon_k\omega_Y),\Lambda_{\omega_X}]\geq 0$ and $B_{2,k,i}:=[\image\Theta_{L,h_{k,i}},\Lambda_{\omega_X}]$.
    By Lemma~\ref{lemma:computation}, near some given point $x\in X$, we have
    \begin{align*}
        \langle B_{1,k}^{-1}b,b \rangle_{h_{k,i},\omega_X}dV_{\omega_X} \leq & \sum_{i_0,\cdots,i_p} C
        \frac{1}{\widetilde{\lambda}_{1,k}+\cdots+\widetilde{\lambda}_{p,k}}|\bar\partial \rho_{i_0}\wedge\cdots\wedge\bar\partial \rho_{i_{p-1}}|^2_{\omega_Y} \\
        &\cdot |\mathcal{H}(\alpha_{i_0\ldots i_p})|^2_{h_k,\omega_X} dV_{\omega_X}.
    \end{align*}
    for some constant $C>0$.
    Therefore, there exists a constant $C_k>0$ such that
    $$\int_X\langle B_{1,k}^{-1}b,b \rangle_{h_{k,i},\omega_X}dV_{\omega_X} \leq C_k<+\infty.$$

    Note that $B_{2,k,i}+(4\varepsilon_k+\delta_i)[\omega_X,\Lambda_{\omega_X}]\geq B_{1,k}+\delta_i[\omega_X,\Lambda_{\omega_X}]>0$.
    We have
    $$\int_X\langle (B_{2,k,i}+(4\varepsilon_k+\delta_i)(p+q)\Id_L)^{-1}b,b \rangle_{h_{k,i},\omega_X}dV_{\omega_X}\leq \int_X\langle B_{1,k}^{-1}b,b \rangle_{h_{k,i},\omega_X}dV_{\omega_X} \leq C_k.$$
    Applying Lemma~\ref{lemma:solve equation}, we obtain an approximate solution $u_{k,i}$ and a correcting term $v_{k,i}$ such that
    \begin{align*}
        u_{k,i}\in L^{n,p+q-1}_{(2)}(X\setminus Z_i,L)_{h_{k,i},\omega_X}, \
        v_{k,i}\in L^{n,p+q}_{(2)}(X\setminus Z_i,L)_{h_{k,i},\omega_X},
    \end{align*}
    and satisfy the equality
    $$\bar\partial u_{k,i}+\sqrt{(4\varepsilon_k+\delta_i)(p+q)}v_{k,i}=b \text{ on } X\setminus Z_i$$
    and the $L^2$ estimate
    $$\|u_{k,i}\|^2_{h_{k,i},\omega_X}+\|v_{k,i}\|^2_{h_{k,i},\omega_X}\leq C_k.$$
    Note that Lemma \ref{lemma:extension} shows that the equation can be extended to $X$.
    Up to a subsequence, we can choose weakly convergent subsequences and obtain weak limits $u_k=\lim_{i\to\infty}u_{k,i}$ and $v_k=\lim_{i\to\infty}v_{k,i}$
    such that
    $$\bar\partial u_k+\sqrt{4\varepsilon_k(p+q)}v_k=b \text{ on } X$$
    and
    $$\|u_k\|^2_{h_k,\omega_X}+\|v_k\|^2_{h_k,\omega_X}\leq C_k.$$

    Thus, $[b]=[\sqrt{4\varepsilon_k(p+q)}v_k]\in H^{p+q}(X,K_X\otimes L\otimes \mathcal{I}(h))$, since $\mathcal{I}(h_k)=\mathcal{I}(h)$.
    Moreover, we have the estimate
    $$\|\sqrt{4\varepsilon_k(p+q)}v_k\|^2_{h_k,\omega_X}\leq 4\varepsilon_k(p+q)C_k.$$
    Therefore, it is sufficient to show that $\lim_{k\to\infty}\varepsilon_k C_k=0$.
    The Hausdorff property of the cohomology group then implies $[b]=0$ (see \cite[Lemma 5.8]{Cao14}).

    \textbf{Step 3: Show that $\lim_{k\to\infty}\varepsilon_kC_k=0$.}

    By Lemma~\ref{lemma:regularization2}, we have
    \begin{align*}
        \varepsilon_k C_k &\leq \varepsilon_k \sum_{i_0,\cdots,i_p} \int C
        \frac{1}{\widetilde{\lambda}_{1,k}+\cdots+\widetilde{\lambda}_{p,k}}|\bar\partial \rho_{i_0}\wedge\cdots\wedge\bar\partial \rho_{i_{p-1}}|^2_{\omega_Y} \cdot |\mathcal{H}(\alpha_{i_0\ldots i_p})|^2_{h_k,\omega_X}  \\
        &\leq C \sum\int_{\pi^{-1}(U_k)}|\mathcal{H}(\alpha_{i_0\ldots i_p})|^2_{h_k,\omega_X} +
        \varepsilon_k^{1-\gamma} C \sum \int_{X\setminus \pi^{-1}(U_k)}|\mathcal{H}(\alpha_{i_0\ldots i_p})|^2_{h_k,\omega_X}.
    \end{align*}
    Again by Lemma \ref{lemma:regularization2}, Skoda's uniform integrability and the strong openness property imply that there exist constants $C>0$ and $s_1>0$, such that on any open subset $U\subset X$,
    $$\int_U |\mathcal{H}(\alpha_{i_0\ldots i_p})|^2_{h_k,\omega_X}\leq C (\int_U |\mathcal{H}(\alpha_{i_0\ldots i_p})|^2_{h_\infty,\omega_X}e^{-(1+s_1)\psi})^{1/(1+s_1)}<+\infty.$$
    Here we use the fact that $*\mathcal{H}(\alpha_{i_0\ldots i_p})$ is holomorphic (see Theorem \ref{thm:construction of representatives}).
    Since $\lim_{k\to\infty}\Vol(U_k)=0$ and $\pi$ is proper, we have $\lim_{k\to\infty}\varepsilon_k C_k=0$.

    This completes the proof.
\end{proof}

\begin{theorem}[=Theorem \ref{MainThm4}]\label{MainThm4'}
    Let $X$ be a compact K\"ahler manifold and $Y$ a projective manifold of dimension $m$. Let $\pi:X\rightarrow Y$ be a holomorphic surjection and $L$ a line bundle over $X$.
    Assume that $\alpha$ is a nef class on $Y$ and $c_1(L)-\pi^*\alpha=\{\theta+\ddbar\psi\}$, where $\theta$ is a smooth real $(1,1)$-form and $\psi$ is a function on $X$ such that $\theta+\ddbar\psi\geq 0$ in the sense of currents.
    Then
    $$H^p(Y,R^q\pi_*(K_X\otimes L\otimes\mathcal{I}(\psi)))=0 \quad \text{for any } p\geq m-\nd(\alpha)+1 \text{ and } q\geq0. $$    
    In particular, if $L$ is a nef line bundle on $Y$, then
    $$H^p(Y,R^q\pi_*K_X\otimes L)=0 \quad \text{for any } p\geq m-\nd(L)+1 \text{ and } q\geq0. $$  
\end{theorem}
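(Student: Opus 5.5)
The introduction says Theorem~\ref{MainThm4} is proved "by induction", and the key difficulty is that $\alpha$ is only a nef class. A nef class need not carry a closed positive current $T$ with $\nd(T)=\nd(\alpha)$ for which $L$ has a metric with $\image\Theta_{L,h}\geq\pi^*T$. So Theorem~\ref{MainThm1'} cannot be applied verbatim. I will instead rerun its $\bar\partial$-argument with smooth approximants of $\alpha$, and use projectivity of $Y$ (hyperplane sections) to make the induction on $\dim Y$ work.

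\textbf{Setup.} Since $\alpha$ is nef, for every $\varepsilon>0$ there is a smooth $\eta_\varepsilon\in\alpha$ with $\eta_\varepsilon\geq-\varepsilon\omega_Y$. The proof runs as follows.
\begin{itemize}
\item Let $\nu=\nd(\alpha)$. Using Yau's theorem, solve
$$(\eta_\varepsilon+\varepsilon\omega_Y+\ddbar u_\varepsilon)^m=c_\varepsilon\,\omega_Y^m,\qquad c_\varepsilon=\int_Y(\alpha+\varepsilon\omega_Y)^m\sim\varepsilon^{m-\nu}.$$
\item This gives Kähler forms $\eta_\varepsilon+\varepsilon\omega_Y+\ddbar u_\varepsilon$ whose eigenvalues $\lambda_{1,\varepsilon}\leq\cdots\leq\lambda_{m,\varepsilon}$ have product of order $\varepsilon^{m-\nu}$.
\item The mass concentration argument (as in \cite{Cao14}, Lemma~\ref{lemma:regularization1}(3), now for a smooth nef class) should give $\lambda_{p,\varepsilon}\geq\varepsilon^\gamma$ outside a set $U_\varepsilon$ with $\Vol(U_\varepsilon)\leq\varepsilon^\beta$, for $p\geq m-\nu+1$.
\end{itemize}
With these forms, put the metric $h_\varepsilon=h_\infty e^{-\psi-\pi^*u_\varepsilon}$ on $L$, using $c_1(L)=\pi^*\alpha+\{\theta+\ddbar\psi\}$ and a smooth reference metric $h_\infty$ adapted to $\theta+\pi^*\eta_\varepsilon$. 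Then
$$\image\Theta_{L,h_\varepsilon}\geq\pi^*(\eta_\varepsilon+\varepsilon\omega_Y+\ddbar u_\varepsilon)-\varepsilon\pi^*\omega_Y.$$
Because $u_\varepsilon$ is smooth, $\mathcal{I}(h_\varepsilon)=\mathcal{I}(\psi)$ automatically. So Lemma~\ref{lemma:regularization2} is not needed here, which is the advantage of the nef case.

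\textbf{The $\bar\partial$-argument.} Steps 2–3 of the proof of Theorem~\ref{MainThm1'} then apply almost verbatim:
\begin{itemize}
\item Regularize $\psi$ equisingularly via Theorem~\ref{thm:equisingular}.
\item Estimate $\langle B^{-1}b,b\rangle$ with Lemma~\ref{lemma:computation}.
\item Solve $\bar\partial u+\sqrt{c\varepsilon}\,v=b$ with Lemma~\ref{lemma:solve equation}.
\item Show $\varepsilon C_\varepsilon\to0$ by splitting the integral over $\pi^{-1}(U_\varepsilon)$ and its complement, as in Step 3.
\item Conclude $[b]=0$ by Hausdorffness, and hence that $\varphi_{p,q}$ from Theorem~\ref{thm:construction of representatives} is zero.
\end{itemize}
There is one extra uniformity issue. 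The potentials $u_\varepsilon$ vary, so I normalize $\sup u_\varepsilon=0$. By Skoda uniform integrability one then needs $\int|\mathcal{H}(\alpha_{\dots})|^2_{h_\varepsilon}$ bounded uniformly, which follows as in Lemma~\ref{lemma:regularization1}(4) after slightly rescaling $u_\varepsilon$ by a small factor $s$ and using strong openness for $\psi$.

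\textbf{Where induction enters.} If the mass concentration estimate for eigenvalues is not directly available for a class that is nef but not big on the nose, I would fall back on induction on $m$ in the projective setting:
\begin{itemize}
\item Take a general very ample smooth divisor $H\subset Y$ and $X_H=\pi^{-1}(H)$, smooth by Bertini.
\item Use the exact sequence
$$0\to K_X\otimes L\otimes\mathcal{I}(\psi)\to K_X\otimes L(X_H)\otimes\mathcal{I}(\psi)\to K_{X_H}\otimes L|_{X_H}\otimes\mathcal{I}(\psi|_{X_H})\to0.$$
This is exact for general $H$, since restriction of multiplier ideals is generic equality.
\item Push forward; the higher direct images remain exact by Koll\'ar-type torsion-freeness (Remark~\ref{Rm:representative and torsion free}).
\item Note $\nd(\alpha|_H)\geq\nd(\alpha)-1$, while $\alpha+c_1(H)$ gains numerical dimension.
\end{itemize}
The base case is $\nd(\alpha)=m$. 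Then $\alpha$ is nef and big, hence carries a Kähler current with analytic singularities, and Theorem~\ref{MainThm1'} applies after absorbing those singularities into $\psi$ via strong openness.

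\textbf{Expected main obstacle.} The hardest step is the uniform eigenvalue/volume estimate for $\lambda_{p,\varepsilon}$ together with the uniform $L^2$ bound on $\pi^{-1}(U_\varepsilon)$. I would try the direct Cao-style argument first, and use induction only if that fails.

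The "in particular" statement follows by taking $\alpha=c_1(L)$, with $L$ on $Y$ pulled back, $\theta=0$, $\psi=0$, and using the projection formula.
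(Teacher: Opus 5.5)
\textbf{Primary route: a genuine gap.} The uniform $L^2$ bound you flag as the main obstacle actually fails in general. Note also that nothing in this route uses projectivity of $Y$, which is already a warning sign.

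Your reference metric, adapted to $\theta+\pi^*\eta_\varepsilon$, depends on $\varepsilon$. Write $\eta_\varepsilon=\eta_0+\ddbar f_\varepsilon$ with $\eta_0$ fixed. Then your weight is really $e^{-\psi-\pi^*(f_\varepsilon+u_\varepsilon)}$. Adding a constant to the weight rescales both sides of the final estimate, so normalizations such as $\sup u_\varepsilon=0$ cannot remove the problem.

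Skoda's uniform integrability only controls small multiples of normalized potentials that are $\theta_0$-psh for a \emph{fixed} $\theta_0$. The forms $\eta_\varepsilon$ are not uniformly bounded above, so this does not apply to $u_\varepsilon$ alone. Rescaling $u_\varepsilon$ by $s$ does nothing to the factor $e^{-\pi^*f_\varepsilon}$. Rescaling $f_\varepsilon$ instead destroys $\eta_\varepsilon\geq-\varepsilon\omega_Y$, because $\eta_0$ has an $O(1)$ negative part.

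The factor $e^{-f_\varepsilon}$ genuinely blows up. Normalized $f_\varepsilon$ (and likewise $f_\varepsilon+u_\varepsilon$) subconverge in $L^1$ to some $f$ with $\eta_0+\ddbar f\geq 0$, and a nef class may contain only singular positive currents. Take the Demailly--Peternell--Schneider example: $\mathbb{P}(E)$ with $E$ the non-split extension of $\mathcal{O}$ by $\mathcal{O}$ on an elliptic curve, and $\alpha=c_1(\mathcal{O}_{\mathbb{P}(E)}(1))$, so $\nd(\alpha)=1$. There $[C_0]$ is the only closed positive current in $\alpha$. Hence $f=\log|s_{C_0}|^2+\mathrm{const}$, and Fatou gives $\int e^{-f_\varepsilon}\to\infty$. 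So the Lemma~\ref{lemma:regularization1}(4)-type bound you invoke, uniform in $\varepsilon$ for local sections of $\mathcal{I}(\psi)$, is false.

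In Theorem~\ref{MainThm1'} the analogous singular part $\widetilde{\varphi}_k$ is harmless. Its singularities are already built into $h$ through $\image\Theta_{L,h}\geq\pi^*T$, and hence into $\mathcal{I}(h)$. Under the hypotheses of Theorem~\ref{MainThm4'} only $\mathcal{I}(\psi)$ appears, so nothing absorbs them. This is why the paper never runs the Monge--Amp\`ere argument on a nef class.

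\textbf{Fallback route: right skeleton, two repairs needed.} Your fallback has the same structure as the paper's proof. Set $\mathcal{F}=K_X\otimes L\otimes\mathcal{I}(\psi)$ and $\mathcal{F}_{X_H}=K_{X_H}\otimes L|_{X_H}\otimes\mathcal{I}(\psi|_{X_H})$. The paper uses the base case via Theorem~\ref{MainThm1'}, exactness of the pushed-forward sequence, and the sequence $H^p(H,R^q\pi_*\mathcal{F}_{X_H})\to H^{p+1}(Y,R^q\pi_*\mathcal{F})\to H^{p+1}(Y,R^q\pi_*\mathcal{F}\otimes\mathcal{O}_Y(H))$. As written, your version has two problems.

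First, $\nd(\alpha|_H)\geq\nd(\alpha)-1$ is too weak. With it, induction kills $H^p(H,\cdot)$ only for $p\geq m-\nd(\alpha)+1$, hence $H^{p+1}(Y,\cdot)$ only for $p+1\geq m-\nd(\alpha)+2$. That is one degree short. You need $\nd(\alpha|_H)=\nd(\alpha)$ when $\nu=\nd(\alpha)<m$. This holds because $(\alpha|_H)^\nu\cdot(\omega_Y|_H)^{m-1-\nu}=\alpha^\nu\cdot c_1(\mathcal{O}_Y(H))\cdot\omega_Y^{m-1-\nu}>0$ for nef $\alpha$ with $\alpha^\nu\neq0$. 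The third term then vanishes by the base case, since $\alpha+c_1(\mathcal{O}_Y(H))$ is ample; ``gains numerical dimension'' is not the right justification.

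Second, in the base case you cannot absorb a K\"ahler current with arbitrary analytic singularities into $\psi$ by strong openness alone, because this can shrink $\mathcal{I}(\psi)$. As the paper does, use nefness to get $\varphi$ with $\eta+\ddbar\varphi\geq\varepsilon_0\omega_Y$ and all Lelong numbers $<\delta$. A convex combination of a K\"ahler current with the almost-positive smooth representatives of $\alpha$ gives this. Then, for $\delta$ small, strong openness yields $\mathcal{I}(\psi+\pi^*\varphi)=\mathcal{I}(\psi)$.
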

\begin{proof}
    If $\nd (\alpha)=m$, $\alpha=\{\eta\}$ is a big class, where $\eta$ is a smooth real $(1,1)$-form. 
    Then for every $\delta>0$, there exists a quasi-psh function $\varphi$ on $Y$ satisfying
    $$\max_{y\in Y}\nu(\varphi,y)<\delta \text{ and }\eta+\ddbar\varphi\geq \varepsilon_0\omega_Y$$
    for some constant $\varepsilon_0>0$, where $\nu(\varphi,y)$ is the Lelong number of $\varphi$ at $y$ and $\omega_Y$ is a K\"ahler metric on $Y$.
    Let $h_\infty$ be a smooth metric on $L$.
    Since $c_1(L)-\pi^*\alpha=\{\theta+\ddbar\psi\}$ and $\theta+\ddbar\psi\geq 0$, there exists a smooth function $\psi'$ on $X$, such that
    $$\image\Theta_{L,h_\infty}-\pi^*\eta+\ddbar\psi'+\ddbar\psi=\theta+\ddbar\psi\geq 0.$$
    
    By taking $\delta$ small enough, we have $\mathcal{I}(\psi+\pi^*\varphi)=\mathcal{I}(\psi)$ by the strong openness property. 
    Then we obtain a singular metric $h:=h_\infty e^{-\psi'-\psi}e^{-\pi^*\varphi}$ on $L$ such that 
    $$\image\Theta_{L,h}=\image\Theta_{L,h_\infty}+\ddbar\psi'+\ddbar\psi+\ddbar\pi^*\varphi\geq\pi^*(\eta+\ddbar\varphi)\geq\pi^*\varepsilon_0\omega_Y$$
    and $\mathcal{I}(h)=\mathcal{\psi}$.
    Note that $\nd(\alpha)=\nd(\varepsilon_0\omega_Y)=m$. By applying Theorem~\ref{MainThm1'}, we have 
    $$H^p(Y,R^q\pi_*(K_X\otimes L\otimes\mathcal{I}(\psi)))=0 \quad \text{for any } p\geq 1 \text{ and } q\geq0. $$

    Assume that $\nd (\alpha)<m$. We prove the theorem by induction. The idea is contained in \cite[Proposition 1.9 ]{FM21}.
    Let $H$ be an ample line bundle.
    We take a sufficiently large positive integer $m$ and a
    general divisor $B\in|H^{\otimes m}|$ such that $D=f^{-1}(B)$ is smooth, contains no associated prime of $\mathcal{O}_X/\mathcal{I}(\psi)$, and satisfies $\mathcal{I}(\psi|_D)=\mathcal{I}(\psi)|_D$.
    (We refer to \cite[Corollary 3.11]{FM21}, \cite{MZ23-b,Xia22} for the related topic on Bertini theorem and the restriction formula.)
    
    Let $A=H^{\otimes m}$. We have the following
    short exact sequence:
    \begin{align*}
        0 \rightarrow K_X\otimes L\otimes \mathcal{I}(\psi) \rightarrow K_X\otimes L\otimes \mathcal{I}(\psi)\otimes A 
        \rightarrow K_D\otimes L|_D\otimes \mathcal{I}(\psi|_D)\rightarrow 0.
    \end{align*}
    Since $B$ is a general member of $|H^{\otimes m}|$, we may assume that $B$ contains no associated primes of $R^qf_*(K_X\otimes L\otimes \mathcal{I}(\psi))$ for every $q$.
    Hence, we can obtain the following short exact sequence for every $q$ (see \cite{Man82}):
    \begin{align*}
        0 &\rightarrow R^qf_*(K_X\otimes L\otimes \mathcal{I}(\psi)) \rightarrow R^qf_*(K_X\otimes L\otimes \mathcal{I}(\psi))\otimes A \\
        &\rightarrow R^qf_*(K_D\otimes L|_D\otimes \mathcal{I}(\psi|_D))\rightarrow 0.
    \end{align*}
    Therefore we get an exact sequence 
    \begin{align*}
        H^p(A,R^qf_*(K_D\otimes L|_D\otimes \mathcal{I}(\psi|_D))) &\rightarrow H^{p+1}(Y,R^qf_*(K_X\otimes L\otimes \mathcal{I}(\psi))) \\
        &\rightarrow H^{p+1}(Y,R^qf_*(K_X\otimes L\otimes \mathcal{I}(\psi))\otimes A)
    \end{align*}
    for every $p\geq 0$.
    Since $A$ is ample enough, we have 
    $$H^{p+1}(Y,R^qf_*(K_X\otimes L\otimes \mathcal{I}(\psi))\otimes A)=0$$
    by Theorem~\ref{MainThm1'}.
    Thus the above exact sequence implies that
    $$H^p(A,R^qf_*(K_D\otimes L|_D\otimes \mathcal{I}(\psi|_D))) \rightarrow H^{p+1}(Y,R^qf_*(K_X\otimes L\otimes \mathcal{I}(\psi)))$$
    is surjetive for every $p\geq 0$. Since $\nd(\alpha|_A)\geq \nd(\alpha)$, 
    we obtain the desired conclusion by induction on the dimension of $Y$.
    
    This completes the proof of Theorem \ref{MainThm4'}.
\end{proof}

\section{Non K\"ahler case} \label{Sec:Application}

In this section, we prove Theorem~\ref{MainThm3}. 

\begin{theorem}[=Theorem \ref{MainThm3}]\label{MainThm3'}
    Let $X$ be a compact manifold in the Fujiki class. Let $\pi:X\rightarrow Y$ be a holomorphic surjection to a compact K\"ahler manifold $Y$ of dimension $m$,
    and let $L$ be a line bundle over $X$ equipped with a singular Hermitian metric $h$ and $T$ a closed positive $(1,1)$-current on $Y$.
    We assume that $\image\Theta_{L,h}\geq \pi^*T$. Then
    $$H^p(Y,R^q\pi_*(K_X\otimes L\otimes\mathcal{I}(h)))=0 \quad \text{for any } p\geq m-\nd(T)+1 \text{ and } q\geq0. $$

\end{theorem}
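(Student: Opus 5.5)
The plan is to reduce to Theorem~\ref{MainThm1'} by passing to a K\"ahler modification of $X$. Since $X$ is in the Fujiki class, there is a proper modification $\mu:\widetilde{X}\rightarrow X$ with $\widetilde{X}$ a compact K\"ahler manifold. We may take $\mu$ to be a composition of blow-ups with smooth centers, so that it is an isomorphism outside a proper analytic subset. Set $\widetilde{\pi}:=\pi\circ\mu:\widetilde{X}\rightarrow Y$, which is again a holomorphic surjection onto the compact K\"ahler manifold $Y$. On $\widetilde{X}$ consider the line bundle $\widetilde{L}:=\mu^*L$ with the pulled-back metric $\mu^*h$. Its curvature satisfies
\[
\image\Theta_{\widetilde{L},\mu^*h}=\mu^*\image\Theta_{L,h}\geq \mu^*\pi^*T=\widetilde{\pi}^*T .
\]
Pullback of currents with a local quasi-psh potential is legitimate here, since $\psi-\pi^*\varphi$ is quasi-psh on $X$. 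Theorem~\ref{MainThm1'} applied to $\widetilde{\pi}$ and $(\widetilde{L},\mu^*h)$ then gives
\[
H^p\bigl(Y,R^q\widetilde{\pi}_*(K_{\widetilde{X}}\otimes\widetilde{L}\otimes\mathcal{I}(\mu^*h))\bigr)=0
\quad\text{for } p\geq m-\nd(T)+1,\ q\geq 0 .
\]

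The second step transfers this back to $X$ using the standard functoriality of multiplier ideals under modifications:
\[
\mu_*\bigl(K_{\widetilde{X}}\otimes\mathcal{I}(\mu^*h)\bigr)=K_X\otimes\mathcal{I}(h)
\quad\text{and}\quad
R^j\mu_*\bigl(K_{\widetilde{X}}\otimes\mu^*L\otimes\mathcal{I}(\mu^*h)\bigr)=0 \ \text{ for } j>0 .
\]
The first identity follows from the change of variables formula, since $|f|^2e^{-\varphi}d\lambda$ pulls back to $|\mu^*f\cdot J_\mu|^2e^{-\mu^*\varphi}$. The second is the relative Nadel/Koll\'ar-type vanishing for higher direct images, available in the generality of \cite{Mat22} or \cite[Corollary 3.8]{QZ26}. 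To apply it, note that $\mu$ is a proper modification between manifolds, and over a Stein neighborhood in $X$ the space $\widetilde{X}$ is weakly pseudoconvex K\"ahler. The relative positivity condition holds because $\mu^*h$ has semipositive curvature relative to a smooth reference, and any needed twist can be absorbed by a Stein exhaustion on the base. The Leray spectral sequence for $\widetilde{\pi}=\pi\circ\mu$ then degenerates and yields
\[
R^q\widetilde{\pi}_*\bigl(K_{\widetilde{X}}\otimes\widetilde{L}\otimes\mathcal{I}(\mu^*h)\bigr)\cong R^q\pi_*\bigl(K_X\otimes L\otimes\mathcal{I}(h)\bigr),
\]
which proves the theorem.

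The main obstacle is the vanishing $R^j\mu_*=0$ for $j>0$ in this singular-metric setting. The source $\widetilde{X}$ is globally K\"ahler, but the metric is only pseudo-effective and $\mu$ has positive-dimensional fibers over its center. The first attempt is to invoke Remark~\ref{Rm:representative and torsion free} for the map $\mu$. Since $\mu$ is generically finite, the torsion-freeness of $R^j\mu_*$ together with the statement that $R^j\mu_*$ vanishes for $j>\dim\widetilde{X}-\dim X=0$ gives exactly what is needed. Working locally over Stein opens of $X$, whose preimages are holomorphically convex K\"ahler, this remark applies as stated.
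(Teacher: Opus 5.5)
Your proposal is correct and follows the paper's proof essentially step for step. You pass to a K\"ahler modification $\mu:\widetilde X\to X$, apply Theorem~\ref{MainThm1'} to $\pi\circ\mu$, use $\mu_*(K_{\widetilde X}\otimes\mu^*L\otimes\mathcal{I}(\mu^*h))=K_X\otimes L\otimes\mathcal{I}(h)$ together with $R^j\mu_*=0$ for $j>0$ from Remark~\ref{Rm:representative and torsion free} (both the paper and your closing paragraph take this route, rather than the more speculative \cite{Mat22}-based justification you float in your second paragraph, which is not needed since $\mu^*h$ is pseudo-effective and $\widetilde X$ is compact K\"ahler), and conclude with the Leray spectral sequence.
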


\begin{proof}
    By the assumption, there exist a K\"ahler manifold $\widetilde{X}$ and a proper modification $\widetilde{\pi}:\widetilde{X}\rightarrow X$.
    Then by Theorem \ref{MainThm1}, we have
    $$H^p(Y,R^q(\pi\circ \widetilde{\pi})_*(K_{\widetilde{X}}\otimes \widetilde{\pi}^*L\otimes\mathcal{I}(\widetilde{\pi}^*h)))=0
    \quad \text{for any } p\geq m-\nd(T)+1 \text{ and } q\geq0.$$
    Since multiplier ideal sheaves satisfy the functorial property under modifications (\cite[(5.8) Proposition]{DemSmall}), we have
    $$\widetilde{\pi}_*(K_{\widetilde{X}}\otimes \widetilde{\pi}^*L\otimes\mathcal{I}(\widetilde{\pi}^*h))=K_X\otimes L\otimes\mathcal{I}(h).$$
    Note that $R^q\widetilde{\pi}_*(K_{\widetilde{X}}\otimes \widetilde{\pi}^*L\otimes\mathcal{I}(\widetilde{\pi}^*h))=0$ for $q>0$
    (see Remark~\ref{Rm:representative and torsion free}).
    We have the following isomorphisms:
    \begin{align*}
        R^q(\pi\circ \widetilde{\pi})_*(K_{\widetilde{X}}\otimes \widetilde{\pi}^*L\otimes\mathcal{I}(\widetilde{\pi}^*h))
        &\cong R^q\pi_*R^0\widetilde{\pi}_*(K_{\widetilde{X}}\otimes \widetilde{\pi}^*L\otimes\mathcal{I}(\widetilde{\pi}^*h))  \\
        &\cong R^q\pi_*(K_X\otimes L\otimes\mathcal{I}(h)).
    \end{align*}
    Therefore,
    $$H^p(Y,R^q\pi_*(K_X\otimes L\otimes\mathcal{I}(h)))=0 \quad \text{for any } p\geq m-\nd(T)+1 \text{ and } q\geq 0.$$
\end{proof}

\end{document}